\newtheorem{theorem}{Theorem}[section]
\newtheorem{corollary}[theorem]{Corollary}
\newtheorem{lemma}[theorem]{Lemma}
\theoremstyle{definition}
\newtheorem{definition}[theorem]{Definition}
\newtheorem{example}[theorem]{Example}
\newtheorem{question}[theorem]{Question}
\theoremstyle{remark}
\newtheorem{remark}[theorem]{Remark}
\newcommand{\N}{\mathbb{N}}
\newcommand{\Z}{\mathbb{Z}}
\newcommand{\Q}{\mathbb{Q}}
\newcommand{\C}{\mathbb{C}}
\newcommand{\F}{\mathbb{F}}
\renewcommand{\P}{\mathbb{P}}
\newcommand{\cA}{\mathcal{A}}
\newcommand{\cG}{\mathcal{G}}
\newcommand{\cP}{\mathcal{P}}
\renewcommand{\mod}[1]{\,(\on{mod}#1)}
\newcommand{\of}[1]{\left(#1\right)}
\newcommand{\set}[1]{\left\{#1\right\}}
\newcommand{\norm}[1]{\left\Vert#1\right\Vert}
\newcommand{\smat}[1]{\begin{smallmatrix}#1\end{smallmatrix}}
\newcommand{\on}{\operatorname}
\renewcommand{\Re}{\on{Re}}
\newcommand{\Gal}{\on{Gal}}
\newcommand{\cI}{\mathcal{I}}
\newcommand{\GL}{{\rm{GL}}}
\newcommand{\RNum}[1]{\uppercase\expandafter{\romannumeral #1\relax}}
\newcommand{\rN}{\mathrm{N}}
\title[Alladi's formula]{Generalizations of Alladi's formula for arithmetical semigroups}
\author[L. Duan]{Lian Duan}
\address{Department of Mathematics, Colorado State University, Fort Collins, CO 80523, USA}
\email{lian.duan@colostate.edu}
\author[N. Ma]{Ning Ma}
\address{Department of Mathematics, State University of New York at Buffalo, Buffalo, NY 14260, USA}
\email{nma22@buffalo.edu}
\author[S. Yi]{Shaoyun Yi}
\address{Department of Mathematics, University of South Carolina, Columbia, SC 29208, USA}
\email{yishaoyun926@gmail.com}
\subjclass[2020]{Primary 11N80, 11R45, 20M13; Secondary 11G25, 11M41, 11R44, 11R58.}
\keywords{Arithmetical semigroups, Alladi's formula, Equidistribution, M\"obius function, Arithmetical geometry, Number theory, Graph theory}
\begin{document}
	\begin{abstract}
In this article, we prove that a general version of Alladi's formula with Dirichlet convolution holds for arithmetical semigroups satisfying Axiom $A$ or Axiom $A^{\#}$. As applications, we apply our main results to certain semigroups coming from algebraic number theory, arithmetical geometry and graph theory.
\end{abstract}
	
\maketitle

\section{Introduction and main results}\label{Sect. Intro}

Let $\mu: \Z_{\geq 1}\to \{0, \pm 1\}$ be  the M\"{o}bius function defined by $\mu(n)=(-1)^k$ if $n$ is the product of $k$ distinct primes, and zero otherwise.  
Let $p_{\min}(n)$ be the smallest prime factor of $n$ with $p_{\min}(1)=1$. In 1977, Alladi  \cite{Alladi1977} discovered a relationship between the M\"{o}bius function $\mu(n)$ and the density of primes in arithmetic progressions by showing  
\begin{equation}\label{alladi}
	-\sum_{\smat{n\geq 2\\ p_{\min}(n)\equiv \ell\, (\on{mod}k)}}\frac{\mu(n)}{n}=\frac1{\varphi(k)}
\end{equation}
for any positive integer $k$ and integer $\ell$ with $(\ell,k)=1$, where $\varphi$ is Euler's totient function. Recently, Wang \cite{Wang2020jnt} proved that if an arithmetic function $a:\Z_{\geq 1}\to \C$ satisfies $a(1)=1$ and $\sum_{n=2}^{\infty}\frac{|a(n)|}{n}\log\log n<\infty$, then 
\begin{equation}\label{Wang_convolution_alladi}
    -\sum_{\smat{n\geq 2\\ p_{\min}(n)\in S}}\frac{\mu * a(n)}{n}=\delta(S),
\end{equation}
where $\mu* a$ is the Dirichlet convolution of $\mu$ and $a$ and $\delta(S)$ is the natural density for a set $S$ of primes. In particular, if one takes $a$ to be such that $a(1)=1$ and $a(n)=0$ for all $n\in \Z_{\geq 2}$, then \eqref{Wang_convolution_alladi} recovers the result of \eqref{alladi} by the prime number theorem in arithmetic progressions in which we have $\delta(S)=1/\varphi(k)$. Notice that $\Z_{\geq 1}$ is a multiplicative semigroup generated by rational prime integers. Indeed, the semigroup structure satisfies the Axiom $A$ (see Sect.~\ref{Sect: Axiom_A}). This motivates us to consider the following question: 
\begin{question}
To what extend one can generalize the Alladi's formula and still get an analogue of \eqref{Wang_convolution_alladi}?
\end{question}

In this work, we discover proper formulations of Wang's result over all arithmetical semigroups satisfying Axiom $A$ or Axiom $A^{\#}$. More precisely, due to the fact that the ``smallest prime factor'' of an element in a general arithmetical semigroup $\cG$ is not well-defined, we focus on those arithmetic functions $a$ only supported on the \textit{distinguished set} $\mathfrak{D}(\cG,S)$ defined in \eqref{Definition of distingushable_Axiom A} and \eqref{Definition of distingushable_Axiom A_sharp}. However, introducing this distinguished set causes a new difficulty which cannot be handled by Wang's method in \cite{Wang2020jnt}. To deal with this difficulty, we use another method to avoid analyzing distinguished sets. By this trick, we manage to show a variation of Alladi's formula with convolution still hold for every such arithmetical semigroup; see Theorem~\ref{Thm: main_thm_convolution_Axiom A} and Theorem~\ref{Thm: main_thm_convolution_Axiom A_sharp} for details. Moreover, we apply our main results specifically to the arithmetical semigroups of interests, i.e., from algebraic number theory, algebraic geometry and graph theory; see Sect.~\ref{Sect: application_details} for explicit details. We hope our results and methods will find applications in future researches.

Recall \cite[Chap.~1, \S~1]{Knopfmacher1975} that an \emph{arithmetical semigroup} is a pair $(\cG, \|\cdot\|)$ (or simply $\cG$ if there is no confusion about $\|\cdot\|$). Here $\cG$ is a commutative semigroup with the identity element $e_{\cG}$, which is freely generated by a finite or countable subset $\cP$ of \emph{primes} of $\cG$. Explicitly, 
$$
\cG\colonequals\left\{\prod_{\text{finite product}} P_i^{a_i}\colon  P_i\in \cP \text{ and } a_i\geq 0 \text{ for all }i\right\}.
$$
In addition, there exists a discrete real-valued \emph{norm mapping} $\|\cdot\|$ on $\cG$ such that 
\begin{enumerate}
    \item[$(1)$] $\|e_{\cG}\|=1$, and $\|P\|>1$ for every $P\in \cP$. 
    \item[$(2)$] $\|gh\|=\|g\|\cdot \|h\|$ for all $g,h\in \cG$. 
    \item[$(3)$] For each real number $x>0$, the total number $\mathcal{N}(x)$ of elements $g\in\cG$ of norm $\norm{g}\leq x$ is finite. 
\end{enumerate}
Note that the conditions (1)-(3) are equivalent to conditions (1) and (2) together with
\begin{enumerate}[resume]
\item[$(3)^\prime$]For each real number $x>0$, the total number $\pi(x)$ of elements $P\in\cP$ of norm $\norm{P}\leq x$ is finite.
\end{enumerate}

Sometimes, instead of considering the norm mapping as above, it will be more convenient to consider a \emph{degree mapping} $\partial$ on $\cG$ (see \cite[Sect.~1.1]{KnopfmacherZhang2001}), which is defined as $\partial(g)\colonequals \log_c\norm{g}$ for some fixed constant $c>1$. So the conditions $(1)-(3)$ above read as 
\begin{enumerate}
    \item[($1^\#$)] $\partial(e_{\cG})=0$ and $\partial(P)>0$ for every $P\in \cP$. 
    \item[($2^\#$)] $\partial(gh)=\partial(g)+\partial(h)$ for all $g,h\in \cG$. 
    \item[($3^\#$)] For each real number $x>0$, the total number $\mathcal{N}^{\#}(x)$ of elements $g\in\cG$ with $\partial(g)\leq x$ is finite. 
\end{enumerate}
Among arithmetical semigroups, we are particularly interested in two types for which some additional axiom is satisfied. In the following subsections, each type together with the corresponding main result will be introduced. 

\subsection{Axiom \texorpdfstring{$A$}{A} type arithmetical semigroups}\label{Sect: Axiom_A}

\begin{definition}\cite[Chap.~4, \S~1]{Knopfmacher1975}\label{Def: Axiom_A}
    An arithmetical semigroup $\cG$ satisfies \emph{Axiom $A$} if 
    \begin{equation}\label{Eqn: Axiom_A}
        \mathcal{N}(x)=c_{\cG}x+O(x^\eta)
    \end{equation}
    with suitable constants $c_{\cG}>0$ and $0<\eta<1$\footnote{In \cite[Chap.~4, \S~1]{Knopfmacher1975}
an arithmetical semigroup $(\cG, \|\cdot\|)$ satisfies Axiom $A$ if 
    $
        \mathcal{N}(x)=c_{\cG}x^{\delta}+O(x^{\eta \delta}),
    $
   with suitable constants $c_{\cG}>0$ and $0<\eta<1$. In our work, for technical convenience we always normalize $\cG$ by taking the new norm $\norm{g}\colonequals |g|^{\delta}$ such that \eqref{Eqn: Axiom_A} holds.}.
\end{definition}

For any subset $S\subseteq\cP$, we say that $S$ has a \textit{natural density} $\delta(S)$ , if the following limit exists:
\begin{equation}
	\delta(S)\colonequals\lim_{x\to\infty}\frac{\pi_{\cG,S}(x)}{\pi_{\cG}(x)},
\end{equation}
where $\pi_{\cG,S}(x)\colonequals\#\set{P\in S:\|P\|\leq x}$ and $\pi_{\cG}(x)\colonequals\pi_{\cG,\cP}(x)$. For $g,h\in \cG$, we say $h| g$, if there exist $r\in \cG$ such that $hr=g$. Let $\rN_-(g)\colonequals\min\{\|P\|\colon P|g\}$ be the minimum norm of all prime factors of $g\in\cG$. And we say that $g$ is \textit{distinguishable} if $g\neq e_{\cG}$ and there is a unique prime factor, say $P_{\min}(g)$, of $g$ attaining the minimum norm $\rN_{-}(g)$. Furthermore, we define\footnote{{Note that our definition is a little different from \cite{Dawsey2017, SweetingWoo2019, KuralMcDonaldSah2020}} in the sense that they do not include the identity. }
\begin{equation}\label{Definition of distingushable_Axiom A}
	\mathfrak{D}(\cG,S)\colonequals\set{g\in \cG: g \text{ is distinguishable and } P_{\min}(g)\in S}\cup \{e_{\cG}\}.
\end{equation}

We will also need the following natural generalization of the \emph{M\"{o}bius function} to elements $g\in\mathcal{G}\colon$
$$
\mu_{\cG}(g)=
\begin{cases}
	1 & \text{ if } g=e_{\cG},\\
	(-1)^k & \text{ if }k>0 \text{ and } g=P_1\cdots P_k,\\
	0 & \text{ if } P^2|g \text{ for some prime } P.
\end{cases}
$$ 
Throughout, we will write $\mu(g)\colonequals\mu_{\cG}(g)$ for simplicity when the context is clear. An \emph{arithmetic function} $a:\cG\to \C$ is said to be \emph{supported on $\mathfrak{D}(\cG,S)$} if $a(g)=0$ unless $g\in\mathfrak{D}(\cG,S)$. And the \emph{Dirichlet convolution} of $\mu$ and $a$ is defined by $\mu*a(g)=\sum_{h| g}\mu(h)a(g/h)$. Moreover, we say that $a$ is the identity of the convolution ring of arithmetic functions over elements of $\cG$ if $a(g)=0$ for all $g\neq e_{\cG}$ and $a(e_{\cG})=1$. With all above notations, we can state our first main result of this work. 

\begin{theorem}\label{Thm: main_thm_convolution_Axiom A}
Given an Axiom $A$ type arithmetical semigroup $\cG$, assume that $S\subseteq\cP$ has a natural density $\delta(S)$. Suppose that $a\colon \mathcal{G}\to \C$ is an arithmetic function supported on $\mathfrak{D}(\cG, S)$ with $a(e_{\cG})=1$, and 
$$\lim_{x\to\infty}\sum_{\smat{2\le \norm{g}\le x\\ g\in\mathfrak{D}(\cG,S)}}\frac{|a(g)|}{\norm{g}}\log\log \norm{g}<\infty.$$

Let $\mu* a$ be the Dirichlet convolution of $\mu$ and $a$. Then
	\begin{equation}\label{Eqn: Convolution_mu_a for Axiom A}
		-\lim_{x\to\infty}  \sum_{\smat{2\le \norm{g}\le x\\ g\in \mathfrak{D}(\cG,S)}}\frac{\mu*a(g)}{\norm{g}}=\delta(S).
	\end{equation}	
In particular, if $a$ is the identity of the convolution ring of arithmetic functions over elements of $\cG$, then 
\begin{equation}\label{Eqn: Alladi_Axiom A}
	-\lim_{x\to\infty}  \sum_{\smat{2\le \norm{g}\le x\\ g\in \mathfrak{D}(\cG,S)}}\frac{\mu(g)}{\norm{g}}=\delta(S).
	\end{equation}

\end{theorem}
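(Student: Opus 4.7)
The plan is to argue via a two-stage reduction: first establish the Möbius-only identity \eqref{Eqn: Alladi_Axiom A}, then bootstrap to the general convolution identity \eqref{Eqn: Convolution_mu_a for Axiom A}.

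For Stage 1 (the Möbius case), since $\mu(g)=0$ unless $g$ is squarefree, I would restrict to squarefree $g \in \mathfrak{D}(\cG, S)$ and factor $g = Pg'$, where $P = P_{\min}(g) \in S$ and $g'$ is squarefree with all prime divisors of norm $>\|P\|$. Then $\mu(g)= -\mu(g')$, and swapping the order of summation gives
$$
-\sum_{\substack{g \in \mathfrak{D}(\cG,S)\\ 2 \leq \|g\| \leq x}} \frac{\mu(g)}{\|g\|}
= \sum_{\substack{P \in S\\ \|P\| \leq x}} \frac{1}{\|P\|}
\sum_{\substack{g' \in \cG \text{ sqfree}\\ \text{primes of }g' \text{ have norm} > \|P\|\\ \|g'\| \leq x/\|P\|}} \frac{\mu(g')}{\|g'\|}.
$$
Axiom $A$ yields an analogue of the prime number theorem ($\pi_\cG(x) \sim c_\cG x/\log x$) and of Mertens' theorem for $\cG$; the inner sum can therefore be estimated, and a partial summation in $P$ combined with the natural-density hypothesis on $S$ produces the limit $\delta(S)$.

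For Stage 2, let $\iota$ denote the identity of the convolution ring ($\iota(e_\cG)=1$, $\iota(g)=0$ otherwise), and set $a' := a - \iota$, so that $\mu * a = \mu + \mu * a'$. By Stage 1 it suffices to show
$$
\lim_{x\to\infty} \sum_{\substack{g \in \mathfrak{D}(\cG,S)\\ 2 \leq \|g\| \leq x}} \frac{(\mu * a')(g)}{\|g\|} = 0.
$$
Expanding and switching the order of summation (writing $g=hm$ with $m=g/h$; the condition $a'(m)\neq 0$ forces $m\in\mathfrak{D}(\cG,S)\setminus\{e_\cG\}$), the left side becomes
$$
\sum_{\substack{m\in\mathfrak{D}(\cG,S)\\ m\neq e_\cG}}\frac{a(m)}{\|m\|}\,\Sigma(x,m),
\qquad
\Sigma(x,m):=\sum_{\substack{h\in\cG,\ hm\in\mathfrak{D}(\cG,S)\\ \|hm\|\leq x}}\frac{\mu(h)}{\|h\|}.
$$
For each fixed $m$, I would split $\Sigma(x,m)$ according to whether the primes of $h$ all have norm $\geq\|P_{\min}(m)\|$ or some prime of $h$ has smaller norm. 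Each sub-sum is itself a Mertens/Alladi-type expression and should tend to $0$ as $x\to\infty$ (the relevant limiting Euler products $\prod_{\|Q\|>p}(1-1/\|Q\|)$ vanish because Axiom $A$ implies $\sum_Q 1/\|Q\|=\infty$). Combined with a uniform upper bound on $|\Sigma(x,m)|$ of log-logarithmic type, matching the hypothesis $\sum |a(g)|(\log\log\|g\|)/\|g\|<\infty$, dominated convergence then passes the $x\to\infty$ limit through the sum over $m$.

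The main obstacle lies in Stage 2. The constraint $hm\in\mathfrak{D}(\cG,S)$ couples the prime factorizations of $h$ and $m$ in a nontrivial way: for instance, if $h$ contains a prime factor of norm equal to $\|P_{\min}(m)\|$ but distinct from $P_{\min}(m)$, then $hm$ fails to be distinguishable even though $h$ and $m$ are individually well-behaved. This coupling is vacuous over $\Z_{\geq 1}$ (where minimum-norm primes are automatic) and is precisely the new difficulty mentioned in the introduction. The authors' promised trick to ``avoid analyzing distinguished sets'' likely amounts to an inclusion-exclusion or auxiliary sum that removes the distinguished-set constraint inside $\Sigma(x,m)$ at the cost of controllable error; establishing the uniform log-log bound on $\Sigma(x,m)$ is where the delicate work is expected to concentrate.
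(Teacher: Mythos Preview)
Your two-stage structure matches the paper's, but the execution differs in both stages, and Stage~2 contains the gap you yourself flag.

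\textbf{Stage 1.} The paper does not estimate the inner M\"obius sum directly. Instead it routes through a duality identity (Lemma~\ref{duality_Axiom A}): for $f$ with $f(1)=0$,
\[
\sum_{h\mid g}\mu(h)\,1_{\mathfrak{D}(\cG,S)}(h)\,f(\rN_-(h)) \;=\; -\,Q_S(g)\,f(\rN^+(g)),
\]
where $Q_S(g)$ counts prime factors of $g$ in $S$ attaining the maximal norm. Summing over $\|g\|\le x$, swapping, and invoking Axer's theorem (Theorem~\ref{Axer Theorem}) reduces the claim to the equidistribution estimate $\sum_{\|g\|\le x}Q_S(g)\sim c_\cG\,\delta(S)\,x$ (Lemma~\ref{equidistribution_AxiomA}). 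Your direct approach is the original Alladi route and is viable, but it requires a genuine asymptotic for the inner sum uniformly in $P$, whereas the paper only ever proves the bounds $R(x,y)=O(1)$ and $R(x,y)\ll\exp(-c\sqrt{\log x})$ for $y\le\log\log x$; the duality detour avoids needing anything sharper.

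\textbf{Stage 2.} Your diagnosis of the obstacle is correct, but the remedy is not inclusion--exclusion on $\mathfrak{D}(\cG,S)$. The paper's trick is to factor out $P_{\min}(g)$ \emph{before} opening the convolution: writing $g=P_{\min}(g)\cdot g'$ with $\rN_-(g')>\|P_{\min}(g)\|$, the distinguished-set constraint becomes simply ``$P_{\min}(g)\in S$'' together with a norm-lower-bound on the primes of $g'$. The latter is precisely the condition defining
\[
R(x,y):=\sum_{\substack{\|g\|\le x\\ \rN_-(g)>y}}\frac{\mu(g)}{\|g\|},
\]
so the coupling between $h$ and $m$ in your $\Sigma(x,m)$ never materialises. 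Concretely, after isolating the $r=g$ term (your $\mu$-piece), the remainder $S_8$ is split as $S_9+S_{10}$ according to whether $P_{\min}(g)\mid r$; in each piece one changes variables, swaps the order of summation so that the $a$-argument sits outermost, and then the innermost sum is an $R(\cdot,\cdot)$. The refined bound on $R$ handles the range $\partial(h')\le\log\log n$ (resp.\ $\|h'\|$ small), and the $\log\log$-hypothesis on $a$ handles the tail. So your sum $\Sigma(x,m)$ is never analysed; the change of variables makes it disappear.
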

\begin{corollary}\label{Main Corollary_Axiom A} 
With $\cG, S$ and $\cP$ as in Theorem~\ref{Thm: main_thm_convolution_Axiom A}, suppose that $a\colon \cG\to \C$ is an arithmetic function supported on $\mathfrak{D}(\cG, S)$ with $a(e_{\cG})=1$ and $|a(g)|\ll \|g\|^{-\alpha}$ for some $\alpha>0$, then
\begin{equation}\label{mainthmeqvaphi_Axiom A}
		-\lim_{x\to\infty}  \sum_{\smat{2\le \norm{g}\le x\\ g\in \mathfrak{D}(\cG,S)}}\frac{\mu*a(g)}{\varphi(g)}=\delta(S),
	\end{equation}
where $\varphi(g)$ is Euler's totient function defined by $
\varphi(g)=\|g\|\prod_{P|g}\left(1-\frac{1}{\|P\|}\right).    
$	
\end{corollary}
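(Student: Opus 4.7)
The plan is to deduce Corollary~\ref{Main Corollary_Axiom A} from Theorem~\ref{Thm: main_thm_convolution_Axiom A} by expressing $1/\varphi(g)$ in terms of $1/\|g\|$. Starting from the elementary factorization
$$\frac{1}{\varphi(g)}=\frac{1}{\|g\|}\prod_{P\mid g}\bigl(1-\|P\|^{-1}\bigr)^{-1}=\frac{1}{\|g\|}\sum_{\substack{h\in\cG\\ \mathrm{rad}(h)\mid g}}\frac{1}{\|h\|},$$
where $\mathrm{rad}(h)$ denotes the squarefree kernel of $h\in\cG$ and the second equality follows by expanding each local geometric series, one splits the sum on the left of~\eqref{mainthmeqvaphi_Axiom A} according to whether $h=e_\cG$ or not. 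The $h=e_\cG$ contribution is exactly the sum in~\eqref{Eqn: Convolution_mu_a for Axiom A}, which tends to $-\delta(S)$ by Theorem~\ref{Thm: main_thm_convolution_Axiom A}. The corollary therefore reduces to showing that the error
$$E(x)\colonequals\sum_{\substack{2\le\|g\|\le x\\ g\in\mathfrak{D}(\cG,S)}}\frac{\mu*a(g)}{\|g\|}\sum_{\substack{h\neq e_\cG\\\mathrm{rad}(h)\mid g}}\frac{1}{\|h\|}$$
tends to $0$ as $x\to\infty$.

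To handle $E(x)$, the essential new input is the stronger hypothesis $|a(g)|\ll\|g\|^{-\alpha}$, which upgrades the marginal $\log\log$-condition of Theorem~\ref{Thm: main_thm_convolution_Axiom A} to power-saving decay. Combining the trivial bound $|\mu*a(g)|\le\sum_{e\mid g}|a(g/e)|$ with the decay and the identity $\sum_g \|g\|^{-s}=\zeta_\cG(s)<\infty$ for $\Re(s)>1$ (a consequence of Axiom~$A$), one obtains $\sum_g|\mu*a(g)|/\|g\|^{1+\epsilon}<\infty$ for any $\epsilon\in(0,\alpha)$. This tail estimate, combined with the Mertens-type bound $\|g\|/\varphi(g)\ll\log\log\|g\|$ available in any Axiom~$A$ arithmetical semigroup, lets me justify interchanging the $g$- and $h$-summations in $E(x)$ via a dyadic truncation and obtain
$$E(\infty)=\sum_{h\neq e_\cG}\frac{1}{\|h\|}\,I_h,\qquad I_h\colonequals\lim_{x\to\infty}\sum_{\substack{g\in\mathfrak{D}(\cG,S)\\ \mathrm{rad}(h)\mid g,\;2\le\|g\|\le x}}\frac{\mu*a(g)}{\|g\|}.$$

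The main obstacle, and the crux of the proof, is to verify that $I_h=0$ for every $h\neq e_\cG$; once this is in hand, $E(\infty)=0$ follows immediately. The intended route parameterizes $g=\mathrm{rad}(h)\cdot g'$ so that the inner series over $g'$ admits a local Euler-product-type factorization, extracting a factor $\prod_{P\nmid h}\bigl(1-\|P\|^{-1}\bigr)$ that vanishes by the $\cG$-analogue of Mertens' theorem (a standard consequence of Axiom~$A$ and the analytic properties of $\zeta_\cG$ at $s=1$). The delicate point is that $\mathfrak{D}(\cG,S)$ is not multiplicatively closed, so the factorization must be executed via inclusion--exclusion over the prime support of $h$, splitting according to whether $P_{\min}(g)$ does or does not lie in that support, and invoking Theorem~\ref{Thm: main_thm_convolution_Axiom A} applied to suitably restricted auxiliary arithmetic functions to conclude the vanishing of $I_h$.
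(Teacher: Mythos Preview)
Your approach is genuinely different from the paper's, and as written it has a real gap.

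The paper does not expand $1/\varphi(g)$ as a sum over radicals at all. Instead it introduces an auxiliary arithmetic function
\[
b(g)\colonequals\sum_{h\mid g}(\mu*a)(h)\,\frac{\|h\|}{\varphi(h)},
\]
so that M\"obius inversion gives the pointwise identity $(\mu*a)(g)/\varphi(g)=(\mu*b)(g)/\|g\|$. The corollary then follows by applying Theorem~\ref{Thm: main_thm_convolution_Axiom A} directly with $b$ in place of $a$; the only work is to verify that $b$ satisfies the $\log\log$ hypothesis, which the paper does by writing $b$ as the Dirichlet convolution of $\|g\|^{-\alpha/2}$ with a second multiplicative function and checking absolute convergence of the relevant Dirichlet series for $\Re(s)>1-\alpha/2$. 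No error term of your type $E(x)$, and no vanishing of inner sums $I_h$, ever appears.

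Your route, by contrast, hinges on two claims that you have not justified. First, the interchange of the $g$- and $h$-summations cannot be done by absolute convergence: already for $a$ equal to the identity one has $\mu*a=\mu$ and $\sum_{g}|\mu(g)|/\varphi(g)$ diverges, so neither Fubini nor dominated convergence applies, and ``dyadic truncation'' is not a substitute. Second, and more seriously, the assertion that $I_h=0$ for each $h\neq e_\cG$ is exactly the hard part, and your sketch does not establish it. The proposed ``Euler-product-type factorization'' extracting $\prod_{P\nmid h}(1-\|P\|^{-1})$ is purely formal: $\mu*a$ is not multiplicative, $\mathfrak{D}(\cG,S)$ is not multiplicatively closed, and the partial sums defining $I_h$ are not Euler products. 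Your fallback---inclusion--exclusion over the prime support of $h$ together with Theorem~\ref{Thm: main_thm_convolution_Axiom A} for ``suitably restricted auxiliary functions''---is left unspecified, and it is unclear how an application of that theorem, whose conclusion is $-\delta(S)$ rather than $0$, would yield the vanishing you need. As it stands, the argument is a program rather than a proof; the paper's Möbius-inversion trick sidesteps both difficulties entirely.
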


\subsection{Axiom \texorpdfstring{$A^{\#}$}{A\#} type arithmetical semigroups}\label{Sect: Axiom_A_sharp}
In this case (unless otherwise stated), it shall be assumed that the degree map $\partial$ has image in $\Z_{\geq 0}$. Furthermore, we call $\cG$ together with $\partial$ an \emph{additive} arithmetical semigroup. In particular, we write
$$
\mathcal{G}\colonequals\left\{\sum_{\text{finite sum}} a_iP_i\colon P_i\in\mathcal{P} \text{ and } a_i\geq 0 \text{ for all }i\right\}.
$$
In the following let $G^{\#}(n)$ be the total number  of elements of degree $n$ in $\cG$. 
\begin{definition}\cite[Sect.~1.1]{KnopfmacherZhang2001}\label{Def: Axiom_A_sharp}
    An arithmetical semigroup $\cG$ satisfies \emph{Axiom A$^{\#}$} if
    \begin{equation}\label{Eqn: Axiom_A_sharp}
G^{\#}(n)=c_{\cG}q^{n}+O(q^{\eta n})
    \end{equation}
    with suitable constants $c_{\cG}>0$, $q>1$ and $0\leq \eta<1$.
\end{definition}
In addition, we assume that $Z_{\cG}(-q^{-1})\neq 0$.\footnote{This is a technical condition to simplify our arguments. In fact (see e.g. \cite{KnopfmacherZhang2001}) many consequences of Axiom $A^{\#}$ are unrelated to the value of $Z_{\cG}(-q^{-1})$, and so the simplifying additional condition would only sometimes become relevant. For the semigroups which do not satisfy this condition, one can refer to \cite{IM91}.} Here the function $Z_{\cG}(z)\colonequals\prod_{P}(1-z^{\partial(P)})^{-1}$ is the so-called zeta function of $\cG$.

Similarly but differently, for $\cG$ satisfying Axiom $A^{\#}$ as above, we define the \textit{natural density} $\delta(S)$ for a subset $S\subseteq \cP$ by the following limit if it exists:
\begin{equation}
	\delta(S)\colonequals\lim_{n\to\infty}\frac{\pi^{\#}_{\cG,S}(n)}{\pi^{\#}_{\cG}(n)},
\end{equation}
where $\pi^{\#}_{\cG,S}(n)\colonequals\#\set{P\in S: \partial(P)=n}$ and $\pi^{\#}_{\cG}(n)\colonequals\pi^{\#}_{\cG,\cP}(n)$. For $g,h\in \cG$, we say $g\ge h$, or equivalently, $h| g$, if there exists $r\in \cG$ such that $h+r=g$. Let $d_{-}(g)\colonequals\min\set{\partial( P): P| g}$ be the minimum degree of all prime factors of $g\in\cG$. We say that $g$ is \textit{distinguishable} if $g\neq e_{\cG}$ and there is a unique prime factor, say $P_{\min}(g)$, of $g$ attaining
the minimum degree $d_{-}(g)$. Furthermore, we define
\begin{equation}\label{Definition of distingushable_Axiom A_sharp}
	\mathfrak{D}^{\#}(\cG,S)\colonequals\set{g\in \mathcal{G}: g \text{ is distinguishable and } P_{\min}(g)\in S}\cup \{e_{\cG}\}.
\end{equation}
We will write $\mathfrak{D}(\cG,S)\colonequals\mathfrak{D}^{\#}(\cG,S)$ for simplicity when the context is clear. Moreover, the general M\"obius function $\mu_{\cG}$, arithmetic functions supported on $\mathfrak{D}(\cG,S)$ and the Dirichlet convolution of two arithmetic functions are defined in the same way as those in the previous subsection (perhaps except that one needs to translate the group law from multiplication to addition). In the following theorem, we fix the norm map $\|\cdot\|: \cG\to \C$ to be $\|g\|=q^{\partial(g)}$ with the base $q$ as in Definition~\ref{Def: Axiom_A_sharp}.

\begin{theorem}\label{Thm: main_thm_convolution_Axiom A_sharp}
Given an Axiom $A^{\#}$ type arithmetical semigroup $\cG$, assume that $S\subseteq\cP$ has a natural density $\delta(S)$. Suppose that $a\colon \cG\to \C$ is an arithmetic function supported on $\mathfrak{D}(\cG, S)$ with $a(e_{\cG})=1$, and 
$$\lim_{n\to\infty}\sum_{\smat{1\le \partial(g)\le n\\ g\in \mathfrak{D}(\cG,S)}}\frac{|a(g)|}{\|g\|}\log\log \|g\|<\infty.$$

Let $\mu* a$ be the Dirichlet convolution of $\mu$ and $a$. Then
	\begin{equation}\label{Eqn: Convolution_mu_a_Axiom A_sharp}
		-\lim_{n\to\infty}  \sum_{\smat{1\le \partial(g)\le n\\ g\in \mathfrak{D}(\cG,S)}}\frac{\mu*a(g)}{\|g\|}=\delta(S). 
	\end{equation}	
In particular, if $a$ is the identity of the convolution ring of arithmetic functions over elements of $\cG$, then 
\begin{equation}\label{Eqn: Alladi_Axiom A_sharp}
		-\lim_{n\to\infty}  \sum_{\smat{1\le \partial(g)\le n\\ g\in \mathfrak{D}(\cG,S)}}\frac{\mu(g)}{\|g\|}=\delta(S).
	\end{equation}
\end{theorem}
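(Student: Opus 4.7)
The plan is to mirror the proof of Theorem~\ref{Thm: main_thm_convolution_Axiom A}, with the multiplicative data (norms $\norm{g}$, counting function $\mathcal{N}(x)$, Axiom $A$) replaced everywhere by the additive analogues (degrees $\partial(g)$, counting function $G^{\#}(n)$, Axiom $A^{\#}$), and using the identification $\norm{g}=q^{\partial(g)}$ throughout. Nothing in the structure of the Axiom $A$ argument exploits the multiplicative form of the norm in an essential way; the only inputs required are an asymptotic for the counting function of $\cG$, a prime number theorem and a Mertens-type estimate for $\cG$, and the defining limit for $\delta(S)$. For Axiom $A^{\#}$ semigroups these are available: the PNT $\pi^{\#}_{\cG}(n)\sim q^{n}/n$ and asymptotics for $\sum_{\partial(P)\le n}\norm{P}^{-1}$ and $\sum_{\partial(g)\le n}\mu(g)/\norm{g}$ follow from \eqref{Eqn: Axiom_A_sharp} together with the standing hypothesis $Z_{\cG}(-q^{-1})\ne 0$ (see \cite{KnopfmacherZhang2001}).

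Concretely, I would open the convolution and swap the order of summation to obtain
\begin{equation*}
\Sigma(n):=-\sum_{\substack{1\le \partial(g)\le n\\ g\in \mathfrak{D}(\cG,S)}}\frac{\mu*a(g)}{\norm{g}} = -\sum_{\substack{k\in \mathfrak{D}(\cG,S)\\ a(k)\ne 0}}\frac{a(k)}{\norm{k}}\sum_{\substack{h:\, hk\in \mathfrak{D}(\cG,S)\\ \partial(hk)\le n,\; hk\ne e_{\cG}}}\frac{\mu(h)}{\norm{h}},
\end{equation*}
and then apply the technique introduced for Theorem~\ref{Thm: main_thm_convolution_Axiom A} to avoid a direct analysis of the constraint $hk\in \mathfrak{D}(\cG,S)$. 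That technique replaces the distinguished-set sum by a cleaner one indexed by the minimal prime factor $P=P_{\min}(g)\in S$, writing $g=P\cdot r$ with $d_{-}(r)>\partial(P)$; the resulting inner sum is an Axiom $A^{\#}$-type sum of the M\"obius function that is amenable to PNT and Mertens asymptotics.

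With this reorganization in place, the $k=e_{\cG}$ diagonal contribution together with the $\log\log\norm{k}$ decay hypothesis on $a$ produces a convergent tail, and partial summation on $P$ together with the definition
\begin{equation*}
\delta(S)=\lim_{n\to\infty}\frac{\pi^{\#}_{\cG,S}(n)}{\pi^{\#}_{\cG}(n)}
\end{equation*}
extracts the density $\delta(S)$ on the right-hand side of \eqref{Eqn: Convolution_mu_a_Axiom A_sharp}. The additive nature of the degree map is only cosmetic here: in our setting $\log\log\norm{k}=\log\partial(k)+\log\log q$, so the $\log\log$ condition still suffices to control the truncation error in the $k$-sum as $n\to\infty$. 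Specializing $a$ to the identity of the convolution ring, so that $\mu*a=\mu$, then yields \eqref{Eqn: Alladi_Axiom A_sharp} as an immediate corollary.

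The main obstacle, exactly as in Theorem~\ref{Thm: main_thm_convolution_Axiom A}, is the coupling between $h$ and $k$ introduced by the constraint $hk\in \mathfrak{D}(\cG,S)$ after swapping the order of summation: Wang's original method in \cite{Wang2020jnt} does not handle this coupling, and the reindexing by $P_{\min}(g)$ is the key new ingredient that decouples the constraint into a local inequality $d_{-}(r)>\partial(P)$ on prime degrees. Once this step is carried out, the remaining work is purely analytic---substitute the Axiom $A^{\#}$ PNT and Mertens estimates, verify that the error terms are $o(1)$ using the $\log\log$-decay hypothesis on $a$, and pass to the limit.
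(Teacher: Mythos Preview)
Your overall decomposition---split $\mu*a$ into the pure $\mu$ diagonal ($k=e_{\cG}$) plus a tail controlled by the $\log\log$ hypothesis on $a$, and reindex the distinguished-set sum by $P_{\min}(g)$---matches the paper's organization of ``$\eqref{Eqn: analogue of main result in global function feild}\Leftrightarrow\eqref{Eqn: to write into partial sum}$'' (the $S_7+S_8$ split and the subsequent $S_9,S_{10}$ analysis). So that half of the argument is on track.

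The gap is in the other half: how you extract $\delta(S)$ from the pure $\mu$ sum. You propose to reindex by $P=P_{\min}(g)\in S$ and then use ``partial summation on $P$ together with the definition of $\delta(S)$.'' Concretely, that reindexing gives
\[
-\sum_{\substack{1\le\partial(g)\le n\\ g\in\mathfrak{D}(\cG,S)}}\frac{\mu(g)}{\|g\|}
=\sum_{m=1}^{n}\frac{\pi^{\#}_{\cG,S}(m)}{q^{m}}\,R(n-m,m),
\]
and writing $\pi^{\#}_{\cG,S}(m)=\bigl(\delta(S)+\varepsilon(m)\bigr)\pi^{\#}_{\cG}(m)$ with $\varepsilon(m)\to 0$ leaves an error essentially of size $\sum_{m}\varepsilon(m)\,m^{-1}\,R(n-m,m)$. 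With only the uniform bound $R=O(1)$ for large $m$ and the refined bound for $m\le\log\log n$, this error is not visibly $o(1)$: the harmonic weight $1/m$ combined with a merely $o(1)$ oscillation $\varepsilon(m)$ can still produce a divergent sum. The paper does \emph{not} argue this way. Instead it passes through Alladi's duality (Lemma~\ref{duality_axiom A_sharp}), which converts the $d_{-}$-indexed sum into one governed by the \emph{largest} prime factor via the weight $Q_S(g)$, and then invokes the equidistribution Lemma~\ref{equidistribution_QSg_Axiom A_sharp} to obtain $\sum_{\partial(g)=n}Q_S(g)\sim c_{\cG}\delta(S)q^{n}$ directly. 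The bridge back to the $\mu$-sum is the ``$\eqref{Eqn: equidis of largest prime factors}\Leftrightarrow\eqref{Eqn: analogue of main result in global function feild}$'' step, which uses Axiom~$A^{\#}$ and the bound of Lemma~\ref{averagemu} to control the error term $S_4$. Your sketch omits both the duality identity and the $Q_S$ equidistribution, and without them the step where $\delta(S)$ appears is not justified.

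A minor organizational point: you frame the proof as ``mirror Theorem~\ref{Thm: main_thm_convolution_Axiom A},'' but in the paper the logic runs the other way---Section~\ref{Sect: Proofs for semigps Axiom A_sharp} proves the Axiom~$A^{\#}$ case in full, and Section~\ref{Sect: Free abe. semigp Axiom A} then sketches the Axiom~$A$ case by analogy. So the reference is circular as written.
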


\begin{corollary}\label{Main Corollary_Axiom A_sharp} 
With $\cG, S$ and $\cP$ as in Theorem~\ref{Thm: main_thm_convolution_Axiom A_sharp}, suppose that $a\colon \cG\to \C$ is an arithmetic function supported on $\mathfrak{D}(\cG, S)$ with $a(e_{\cG})=1$ and $|a(g)|\ll \|g\|^{-\alpha}$ for some $\alpha>0$, then
\begin{equation}\label{mainthmeqvaphi}
		-\lim_{n\to\infty}  \sum_{\smat{1\le \partial(g)\le n\\ g\in \mathfrak{D}(\cG,S)}}\frac{\mu*a(g)}{\varphi(g)}=\delta(S),
	\end{equation}
where $\varphi(g)$ is Euler's totient function defined by $
\varphi(g)=\|g\|\prod_{P|g}\left(1-\frac{1}{\|P\|}\right).    
$	
\end{corollary}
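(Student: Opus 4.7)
The plan is to reduce the statement to Theorem \ref{Thm: main_thm_convolution_Axiom A_sharp} by replacing the weight $1/\varphi(g)$ with $1/\|g\|$ plus a correction term. Using the multiplicative identity
\[
\frac{\|g\|}{\varphi(g)}=\prod_{P\mid g}\left(1-\frac{1}{\|P\|}\right)^{-1}=\sum_{\substack{d\mid g\\ d\text{ squarefree}}}\prod_{P\mid d}\frac{1}{\|P\|-1},
\]
one writes
\[
\frac{1}{\varphi(g)}=\frac{1}{\|g\|}+\sum_{\substack{d\mid g,\,d\text{ squarefree}\\ d\neq e_\cG}}\frac{1}{\|g\|\,\varphi(d)},
\]
where for squarefree $d$ we use the shorthand $\varphi(d)=\prod_{P\mid d}(\|P\|-1)$. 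Substituting into the left-hand side of \eqref{mainthmeqvaphi} decomposes it as
\[
\sum_{\substack{g\in\mathfrak{D}(\cG,S)\\ 1\le\partial(g)\le n}}\frac{\mu*a(g)}{\varphi(g)}=\sum_{\substack{g\in\mathfrak{D}(\cG,S)\\ 1\le\partial(g)\le n}}\frac{\mu*a(g)}{\|g\|}+\mathcal{E}_n.
\]
By Theorem \ref{Thm: main_thm_convolution_Axiom A_sharp}, whose log-log integrability hypothesis is implied by the polynomial decay $|a(g)|\ll\|g\|^{-\alpha}$, the first sum on the right converges to $-\delta(S)$. The remaining task is to show that $\mathcal{E}_n\to 0$ as $n\to\infty$.

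Interchanging summation one obtains
\[
\mathcal{E}_n=\sum_{\substack{d\text{ squarefree}\\ d\neq e_\cG}}\frac{1}{\varphi(d)}\sum_{\substack{g\in\mathfrak{D}(\cG,S),\,d\mid g\\ 1\le\partial(g)\le n}}\frac{\mu*a(g)}{\|g\|}.
\]
For each fixed squarefree $d$ one analyzes the inner sum by expanding $(\mu*a)(g)=\sum_{h\mid g}\mu(h)\,a(g/h)$ and leveraging that $a$ is supported on $\mathfrak{D}(\cG,S)$, so the inner sum reduces to sums of $a(m)$ against divisor-type weights to which the polynomial decay $|a(g)|\ll\|g\|^{-\alpha}$ applies. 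Combined with a Mertens-type estimate $\|g\|/\varphi(g)\ll\log\log\|g\|$, which follows from Axiom $A^{\#}$ and \eqref{Eqn: Axiom_A_sharp}, one shows that each inner sum decays sufficiently in $\|d\|$ so that the outer sum over $d$ converges absolutely and, moreover, that $\mathcal{E}_n\to 0$.

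The main obstacle is proving convergence and vanishing in a unified way: a naive triangle-inequality bound on $|\mu*a(g)|$ is insufficient because $\sum_{d\text{ sqfree}}1/\varphi(d)$ diverges, so the argument must exploit the sign cancellation in $\mu*a$ coming from the M\"obius factor together with the distinguishability constraint $g\in\mathfrak{D}(\cG,S)$. This delicate analysis mirrors the one used in the proof of Corollary \ref{Main Corollary_Axiom A} for the Axiom $A$ case, and carries over mutatis mutandis upon translating the norm asymptotics \eqref{Eqn: Axiom_A} into the degree asymptotics \eqref{Eqn: Axiom_A_sharp}.
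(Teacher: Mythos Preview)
Your decomposition via the expansion of $\|g\|/\varphi(g)$ into squarefree divisors is a natural first move, but it diverges from the paper's route, and as written it leaves the hardest step unproved. The paper does not split off an error term $\mathcal{E}_n$ at all. Instead it defines
\[
b(g)\colonequals\sum_{h\mid g}(\mu*a)(h)\,\frac{\|h\|}{\varphi(h)},
\]
so that by M\"obius inversion $\dfrac{\mu*a(g)}{\varphi(g)}=\dfrac{\mu*b(g)}{\|g\|}$ identically. The entire sum in \eqref{mainthmeqvaphi} is then literally of the form to which Theorem~\ref{Thm: main_thm_convolution_Axiom A_sharp} applies, with $b$ in place of $a$; the only work is to check the log--log hypothesis for $b$. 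The paper does this by writing $b$ as a Dirichlet convolution of two explicit multiplicative pieces and showing the associated Dirichlet series converges absolutely past $s=1$. No cancellation from $\mu$ is ever invoked.

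By contrast, your argument hinges on showing $\mathcal{E}_n\to 0$, and here there is a genuine gap. You correctly observe that a triangle-inequality bound fails because $\sum_{d\text{ sqfree}}1/\varphi(d)$ diverges, but you do not supply the mechanism that recovers the loss. The claim that ``each inner sum decays sufficiently in $\|d\|$'' is exactly what needs proof, and the appeal to ``the proof of Corollary~\ref{Main Corollary_Axiom A}'' is circular: in the paper that corollary is proved by the same M\"obius-inversion substitution as Corollary~\ref{Main Corollary_Axiom A_sharp}, not by your divisor decomposition, so there is no pre-existing ``delicate analysis'' of $\mathcal{E}_n$ to invoke. If you wish to push your approach through, you would effectively need to prove, for each nontrivial squarefree $d$, a quantitative version of Theorem~\ref{Thm: main_thm_convolution_Axiom A_sharp} restricted to multiples of $d$, with enough uniformity in $d$ to sum against $1/\varphi(d)$; this is substantially harder than the paper's route and is not sketched in your proposal.
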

\subsection{Prior work}
Alladi's original result \eqref{alladi} was generalized by Dawsey \cite{Dawsey2017} to the setting of Chebotarev densities for finite Galois extensions of $\Q$, and further by Sweeting and Woo \cite{SweetingWoo2019} to number fields. Later, Kural-McDonald-Sah \cite{KuralMcDonaldSah2020} generalized  all these results to natural densities of sets of prime ideals of a number field $K$. A recent work  \cite{DuanWangYi2020} of Wang with the first and third authors of this article showed the analogue of Kural-McDonald-Sah's result over global function fields. Wang \cite{Wang2020ijnt, Wang2020rj, Wang2020jnt} showed the analogues of these results over $\Q$ for some arithmetic functions other than $\mu$. It is worth to notice that in the partition setting (subject to Axiom $C$ \cite[Chap.~8, \S~1]{Knopfmacher1975}), a series work of Schneider \cite{Schneider2017, Schneider2020}, Ono-Wagner-Schneider \cite{OnoSchneiderWagner2017, OnoSchneiderWagner2020} and Dawsey-Just-Schneider \cite{DawseyJustSchneider2021} have built the framework of the corresponding generalizations. The interested readers are recommended to their papers.

\subsection{A quick summary of some applications}\label{Sect: Application_short}
There are many examples of arithmetical semigroups satisfying Axiom $A$ or Axiom $A^{\#}$; see \cite{Knopfmacher1975, KnopfmacherZhang2001, Terras-zeta-fun-graph}. As for a reference of other types, see \cite[Chap.~8, \S~1]{Knopfmacher1975}. In this subsection, we list the applications of Theorem~\ref{Thm: main_thm_convolution_Axiom A} and Theorem~\ref{Thm: main_thm_convolution_Axiom A_sharp} to some examples of interest in number theory, arithmetical geometry and graph theory. To save the space, only the essential information will be included in this subsection, and more detailed background of each example will be provided in Sect.~\ref{Sect: application_details}. In particular, we would like to fix the following notations. 
\begin{enumerate}
    \item For a number field $K$, we will denote the semigroup of integral ideals by $\mathcal{I}_K$. 
    \item For a $d$-dimensional smooth projective variety $X$ over a finite field $\F_q$, where $q=p^r$ is a power of prime integer $p$, we will denote the semigroup of effective $0$-cycles of $X$ by $\mathcal{A}_X$. 
    \item For a finite connected undirected graph $G$ without degree-$1$ vertices, we will denote the semigroup generated by classes of primitive paths by $\mathcal{A}_G$.
\end{enumerate}
In Table~\ref{table: examples}, we summarize the necessary information for each of above examples. Specifically, for each of the above examples, we list the type, the prime elements and the general elements of the associated semigroup. We also specify the degree map (if it makes sense) and the norm map those are involved in the corresponding result. As a support that each semigroup in this table satisfies either Axiom $A$ or Axiom $A^{\#}$, we list its corresponding function $\mathcal{N}$ or $G^{\#}$, which is equivalent to the associated \emph{prime number theorem} (PNT).
\begin{table}
\centering
\resizebox{\linewidth}{!}{%
\begin{tabular}{|c || c | c | c|}
 \hline
 Semigroup & $\mathcal{I}_K$ & $\mathcal{A}_X$& $\mathcal{A}_G$ \\ [0.5ex] \hline
  Type   & Axiom $A$ & Axiom $A^{\#}$ & Axiom $A^{\#}$ \\ \hline 
 Prime elements& prime ideals $\mathfrak{p}$ & prime $0$-cycles $P$ over $\F_q$ & equivalent classes $[P]$ of primitive paths \\ \hline
 General elements & integral ideals $\mathfrak{a}$ of $\mathcal{O}_K$ & effective $0$-cycles $A$ of $X$ over $\F_q$ &  finite formal sum $[C]=\sum a_i[P_i], a_i\geq 0$ \\ \hline
  Degree map $\partial$  &  & $\partial(P)=\#\{\text{geometric points above }P\}$ & $\partial([P])=\text{length of }P$\\ \hline
  Norm map $\|\cdot\|$ & $\| \mathfrak{p}\|=\# (\mathcal{O}_K/\mathfrak{p})$ & $\|P\|=(q^d)^{\partial(P)}$ & $\|[P]\|=1/R_G^{\partial([P])}$\\ \hline
 $\mathcal{N}$ or $G^{\#}$ & $\mathcal{N}(x)=c_{K}x+O(x^{1-1/[K:\Q]})$ & $G^{\#}(n)=c_{X}(q^d)^{\Delta_G n}+O(q^{(d-1/2)n})$ & $G^{\#}(n)=c_{G}/R_G^{\Delta_G n}+O(1/R_G^{\Delta_G\eta n})$\\ \hline
 PNT & $\pi_{K}(x)=\dfrac{x}{\log x}+O(x\,\mathrm{exp}(-c_K\sqrt{\log x})) $& $\pi^{\#}_X(n)=\dfrac{(q^d)^{n}}{n}+O\left(\dfrac{q^{(d-1/2)n}}{n}\right)$ & $\pi^{\#}_G(n)= \dfrac{1}{n R^{\Delta_G n}_G}+O\left(\dfrac{1}{n R_G^{\Delta_G \eta n}}\right)$ \\ \hline
 Main result & Corollary~\ref{Cor: Alladi_number_field}&Corollary~\ref{Cor: Alladi_varieties}&Corollary~\ref{Cor: Alladi_graph}\\ \hline
 Reference section & Sect.~\ref{Sect: applicatioin_num_field}& Sect.~\ref{Sect: application_0_cycles}& Sect.~\ref{Sect: application_graph_theory} \\ \hline

\end{tabular}}
\caption{Examples of semigroups}
\label{table: examples}
\end{table}

It follows from Table~\ref{table: examples} and Theorem~\ref{Thm: main_thm_convolution_Axiom A} that we have the following result for the semigroup $\mathcal{I}_K$ of integral ideals of a number field $K$.
\begin{corollary}\label{Cor: Alladi_number_field}
For a number field $K$, assume that $S\subseteq \cP$ is a subset of prime ideals with natural density $\delta(S)$. For any arithmetic function $a: \mathcal{I}_K\to \C$ supported on $\mathfrak{D}(\mathcal{I}_K, S)$ with $a(\mathcal{O}_K)=1$, and 
$$\lim_{x\to\infty}\sum_{\smat{2\le \norm{\mathfrak{a}}\le x\\ \mathfrak{a}\in\mathfrak{D}(\cI_K,S)}}\frac{|a(\mathfrak{a})|}{\norm{\mathfrak{a}}}\log\log \norm{\mathfrak{a}}<\infty.$$
Then
	\begin{equation}\label{Eqn: Main result for number fields K as a cor of Axiom A}
		-\lim_{x\to\infty}  \sum_{\smat{2\le \norm{\mathfrak{a}}\le x\\ \mathfrak{a}\in \mathfrak{D}(\cI_K,S)}}\frac{\mu*a(\mathfrak{a})}{\norm{\mathfrak{a}}}=\delta(S).
	\end{equation}	
In particular, we have
\begin{equation}\label{Recover KMS 2020}
	-\lim_{x\to\infty}  \sum_{\smat{2\le \norm{\mathfrak{a}}\le x\\ \mathfrak{a}\in \mathfrak{D}(\cI_K,S)}}\frac{\mu(\mathfrak{a})}{\norm{\mathfrak{a}}}=\delta(S).
	\end{equation}
\end{corollary}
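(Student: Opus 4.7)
The plan is to deduce Corollary~\ref{Cor: Alladi_number_field} as a direct specialization of Theorem~\ref{Thm: main_thm_convolution_Axiom A}. To do this, I must verify that the pair $(\mathcal{I}_K,\|\cdot\|)$, where $\|\mathfrak{a}\|=\#(\mathcal{O}_K/\mathfrak{a})$ and $\cP$ is the set of nonzero prime ideals of $\mathcal{O}_K$, is an arithmetical semigroup of Axiom $A$ type in the sense of Definition~\ref{Def: Axiom_A}. Since $\mathcal{O}_K$ is a Dedekind domain, every nonzero ideal factors uniquely as a product of prime ideals, so $\mathcal{I}_K$ is freely generated by $\cP$ with identity $\mathcal{O}_K$. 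The absolute norm is completely multiplicative, satisfies $\|\mathfrak{p}\|>1$ for all $\mathfrak{p}\in\cP$, and takes discrete values in $\Z_{\geq 1}$. Thus conditions (1) and (2) of the definition hold immediately.

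Next, I would invoke Landau's classical ideal-counting theorem, which gives
\[
\mathcal{N}(x)=\#\{\mathfrak{a}\subseteq\mathcal{O}_K\colon\|\mathfrak{a}\|\le x\}=c_K x+O(x^{1-1/[K:\Q]}),
\]
where $c_K$ is the Dedekind residue. This is precisely \eqref{Eqn: Axiom_A} with $\eta=1-1/[K:\Q]\in(0,1)$, so Axiom $A$ is satisfied. (The associated prime ideal theorem $\pi_K(x)=x/\log x+O(x\exp(-c_K\sqrt{\log x}))$ listed in Table~\ref{table: examples} is an immediate consequence and is not separately needed here.)

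With these verifications in place, the hypotheses of Theorem~\ref{Thm: main_thm_convolution_Axiom A} are met: $S\subseteq\cP$ has natural density $\delta(S)$, and $a\colon\mathcal{I}_K\to\C$ is supported on $\mathfrak{D}(\mathcal{I}_K,S)$ with $a(\mathcal{O}_K)=1$ and the required convergence of $\sum\tfrac{|a(\mathfrak{a})|}{\|\mathfrak{a}\|}\log\log\|\mathfrak{a}\|$. Applying \eqref{Eqn: Convolution_mu_a for Axiom A} yields \eqref{Eqn: Main result for number fields K as a cor of Axiom A} verbatim. For the second assertion \eqref{Recover KMS 2020}, I take $a$ to be the identity of the convolution ring, i.e.\ $a(\mathcal{O}_K)=1$ and $a(\mathfrak{a})=0$ otherwise; then $\mu*a=\mu$ and the summability hypothesis is trivial, so \eqref{Eqn: Alladi_Axiom A} specializes to the desired formula.

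Since this corollary is a pure specialization, there is no substantive analytic obstacle; the only non-mechanical step is citing Landau's ideal theorem to establish Axiom $A$ with the correct error exponent. All other content is carried by Theorem~\ref{Thm: main_thm_convolution_Axiom A}, whose proof will handle the subtleties caused by restricting summation to the distinguished set $\mathfrak{D}(\mathcal{I}_K,S)$.
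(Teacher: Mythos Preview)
Your proposal is correct and matches the paper's own argument essentially verbatim: the paper verifies in Sect.~\ref{Sect: applicatioin_num_field} that $\mathcal{I}_K$ with the ideal norm is an Axiom~$A$ arithmetical semigroup via the Landau-type ideal-counting estimate $\mathcal{N}(x)=c_K x+O(x^{1-1/[K:\Q]})$, and then invokes Theorem~\ref{Thm: main_thm_convolution_Axiom A} directly. There is nothing to add.
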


\begin{remark}
Note that \eqref{Recover KMS 2020} recovers the main theorem in \cite{KuralMcDonaldSah2020}. In this sense, Corollary~\ref{Cor: Alladi_number_field} is a generalization of the result in \cite{KuralMcDonaldSah2020} to some arithmetic functions other than $\mu$. On the other hand, if we let $K=\Q$, then \eqref{Eqn: Main result for number fields K as a cor of Axiom A} in this case is exactly the main theorem of \cite{Wang2020jnt}. And so in this sense, Corollary~\ref{Cor: Alladi_number_field} generalizes the result in \cite{Wang2020jnt} to all number fields.
\end{remark}

It follows from Table~\ref{table: examples} and Theorem~\ref{Thm: main_thm_convolution_Axiom A_sharp} that we have the following result for the semigroup $\cA_X$ of effective $0$-cycles of a $d$-dimensional smooth projective variety $X$ over a finite field $\F_q$.
\begin{corollary}\label{Cor: Alladi_varieties}
For a smooth projective $d$-dimensional variety $X$ defined over a finite field $\F_q$, assume that $S\subseteq \cP$ is a subset of prime $0$-cycles with natural density $\delta(S)$. For any arithmetic function $a: \cA_{X}\to \C$ supported on $\mathfrak{D}(\cA_X, S)$ with $a(0)=1$, and 
$$\lim_{n\to\infty}\sum_{\smat{1\le \partial(A)\le n\\ A\in \mathfrak{D}(\cA_X,S)}}\frac{|a(A)|}{\|A\|}\log\log \|A\|<\infty.$$
Then
	\begin{equation}
		-\lim_{n\to\infty}  \sum_{\smat{1\le \partial(A)\le n\\ A\in \mathfrak{D}(\cA_X,S)}}\frac{\mu*a(A)}{\|A\|}=\delta(S). 
	\end{equation}	
In particular, we have
\begin{equation}\label{Eqn: recover_DWY}
		-\lim_{n\to\infty}  \sum_{\smat{1\le \partial(A)\le n\\ A\in \mathfrak{D}(\cA_{X},S)}}\frac{\mu(A)}{\|A\|}=\delta(S).
	\end{equation}
\end{corollary}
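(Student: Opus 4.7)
The plan is to obtain the corollary as an immediate consequence of Theorem~\ref{Thm: main_thm_convolution_Axiom A_sharp} applied to $\cA_X$, so the whole task reduces to verifying that $\cA_X$ (with the degree map $\partial$ sending an effective $0$-cycle $A = \sum n_i P_i$ to $\sum n_i [\kappa(P_i) : \F_q]$ and norm map $\|A\| = (q^d)^{\partial(A)}$) is an additive arithmetical semigroup of Axiom $A^{\#}$ type with base $q^d$, and that the side condition $Z_{\cA_X}(-(q^d)^{-1}) \neq 0$ holds. Once these are in place, the integrability hypothesis on $a$ in the corollary is precisely the input fed into the theorem, and the asserted identity follows at once.

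Unique factorisation of an effective $0$-cycle into its closed-point components is automatic, making $\cA_X$ into a free commutative monoid on $\cP$ for which $(1^{\#})$ and $(2^{\#})$ hold by construction. The principal step is the verification of $(3^{\#})$ together with the Axiom $A^{\#}$ asymptotic for $G^{\#}(n)$. The generating series
\[
\sum_{n \geq 0} G^{\#}(n)\, t^n \;=\; \prod_{P \in \cP} \bigl(1 - t^{\partial(P)}\bigr)^{-1} \;=\; Z(X, t)
\]
is exactly the Hasse--Weil zeta function of $X$. By the Weil conjectures (Deligne), $Z(X, t)$ is rational and factors as $\prod_{i=0}^{2d} P_i(t)^{(-1)^{i+1}}$ with the reciprocal roots of $P_i$ lying on the circle $|z| = q^{i/2}$; for a geometrically integral $X$ one has $P_{2d}(t) = 1 - q^d t$, giving a simple rightmost pole at $t = q^{-d}$. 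A partial-fraction expansion around this pole, combined with the bounds on the remaining factors, yields
\[
G^{\#}(n) = c_X\,(q^d)^n + O\bigl(q^{(d-1/2)n}\bigr),
\]
which is Axiom $A^{\#}$ with $\eta = 1 - 1/(2d)$.

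For the side condition, observe that $Z(X, -q^{-d})$ could vanish only if some numerator factor $P_i$ (with $i$ odd and $i \le 2d-1$) admitted $-q^{-d}$ as a root; its reciprocal root would then have absolute value $q^d$, contradicting $q^{i/2} \le q^{d-1/2} < q^d$. With the three verifications in hand, Theorem~\ref{Thm: main_thm_convolution_Axiom A_sharp} applies directly, and the specialisation $a = $ identity of the convolution ring recovers \eqref{Eqn: recover_DWY}. The main conceptual input is thus the Weil conjectures, which supply both the asymptotic for $G^{\#}(n)$ and the non-vanishing of $Z_{\cA_X}(-(q^d)^{-1})$; the primary subtlety I anticipate is the implicit assumption that $X$ be geometrically integral, without which $P_{2d}$ may contribute several reciprocal roots at $q^d$ and the leading constant $c_X$ must be interpreted as a sum over connected components.
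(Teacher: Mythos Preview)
Your proposal is correct and follows essentially the same approach as the paper: in Section~\ref{Sect: application_0_cycles} the authors verify that $\cA_X$ is an Axiom~$A^{\#}$ semigroup with base $q^d$ by invoking the Weil conjectures to obtain $G^{\#}(n)=c_X(q^d)^n+O(q^{(d-1/2)n})$, note that $Z_X$ has no zero at $-(q^d)^{-1}$ because the numerator factors $F_i$ for odd $i$ have reciprocal roots of absolute value at most $q^{d-1/2}$, and then apply Theorem~\ref{Thm: main_thm_convolution_Axiom A_sharp}. Your remark on geometric integrality is a reasonable caveat that the paper leaves implicit in the word ``variety''.
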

\begin{remark}
When $X$ has the dimension one, we have $d=1$ in the above setups. Then one can check that in this case the semigroup $\cA_X$ coincides with the semigroup of effective divisors of $X$. It follows that \eqref{Eqn: recover_DWY} recovers the main theorem of \cite{DuanWangYi2020}. Hence one can think Corollary~\ref{Cor: Alladi_varieties} as a generalization of the result in \cite{DuanWangYi2020} to some arithmetic functions other than $\mu$ for all higher dimensional varieties. 
\end{remark}

\begin{remark}
We note that the ``smooth and projective'' condition in Corollary~\ref{Cor: Alladi_varieties} is not essential. In fact, if $X$ is a geometrically connected scheme, then $0$-cycles of $X$ are still well defined and so analogous results in Table~\ref{table: examples} can be deduced from its Hasse-Weil zeta function by a similar argument as in Sect.~\ref{Sect: application_0_cycles}; see \cite{Chen2017} for more details about the zeta function of $X$ and the prime number theorem in general cases.
\end{remark}

\begin{remark}
Based on Corollary~\ref{Cor: Alladi_varieties}, we can also deduce some other consequences regarding to the distribution of hyperplanes of projective spaces; see Sect.~\ref{Sect: application_0_cycles} for more details.
\end{remark}

Similarly, it follows from Table~\ref{table: examples} and Theorem~\ref{Thm: main_thm_convolution_Axiom A_sharp} that we have the following result for the semigroup $\cA_G$ generated by classes of primitive paths of a finite connected undirected graph $G$ without degree-$1$ vertices.

\begin{corollary}\label{Cor: Alladi_graph}
For a finite connected undirected graph $G$ without degree-$1$ vertices, assume that $S\subseteq \cP$ is a subset of classes of primitive paths with natural density $\delta(S)$. For any arithmetic function $a: \cA_{G}\to \C$ supported on $\mathfrak{D}(\cA_G, S)$ with $a([0])=1$, and 
$$\lim_{n\to\infty}\sum_{\smat{1\le \partial([C])\le n\\ [C]\in \mathfrak{D}(\cA_G,S)}}\frac{|a([C])|}{\|[C]\|}\log\log \|[C]\|<\infty.$$
Then
	\begin{equation}
		-\lim_{n\to\infty}  \sum_{\smat{1\le \partial([C])\le n\\ [C]\in \mathfrak{D}(\cA_G,S)}}\frac{\mu*a([C])}{\|[C]\|}=\delta(S). 
	\end{equation}	
In particular, we have
\begin{equation}
		-\lim_{n\to\infty}  \sum_{\smat{1\le \partial([C])\le n\\ [C]\in \mathfrak{D}(\cA_{G},S)}}\frac{\mu([C])}{\|[C]\|}=\delta(S).
	\end{equation}
\end{corollary}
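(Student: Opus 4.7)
The strategy is to deduce Corollary~\ref{Cor: Alladi_graph} as a direct specialization of Theorem~\ref{Thm: main_thm_convolution_Axiom A_sharp}. The only substantive task is to verify that $(\cA_G,\partial)$ is an additive arithmetical semigroup in the sense of Sect.~\ref{Sect: Axiom_A_sharp}, that it satisfies Axiom $A^{\#}$ with the base $q$ and norm indicated in Table~\ref{table: examples}, and that the nonvanishing condition $Z_{\cA_G}(-q^{-1})\neq 0$ holds. Once these structural facts are in place, the hypotheses and conclusion of Theorem~\ref{Thm: main_thm_convolution_Axiom A_sharp} become word-for-word those of the corollary, so no further work is needed at the level of the main identity.

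The details of this verification are deferred to Sect.~\ref{Sect: application_graph_theory}, but the outline is as follows. Following \cite{Terras-zeta-fun-graph}, the primitive (reduced, tailless) closed paths in $G$, taken up to cyclic rotation, generate $\cA_G$ as a free commutative semigroup under formal addition, with degree $\partial([P])$ equal to the edge-length of $P$. The associated zeta function $Z_{\cA_G}(u)=\prod_{[P]}(1-u^{\partial([P])})^{-1}$ is the Ihara zeta function, which is rational by the Bass--Ihara determinant formula, so its poles and zeros are finite and explicit. The Ihara prime number theorem (equivalently, a standard Perron/contour argument applied to $-u Z_{\cA_G}'(u)/Z_{\cA_G}(u)$) then delivers the asymptotic $G^{\#}(n)=c_G R_G^{-\Delta_G n}+O(R_G^{-\Delta_G\eta n})$ listed in Table~\ref{table: examples}, i.e., Axiom $A^{\#}$ with base $q=R_G^{-\Delta_G}$. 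The nonvanishing $Z_{\cA_G}(-q^{-1})\neq 0$ can be read off from the explicit Bass factorization of $Z_{\cA_G}(u)^{-1}$ in terms of the eigenvalues of the adjacency and edge operators.

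Once this data is assembled, Theorem~\ref{Thm: main_thm_convolution_Axiom A_sharp} applied to $\cA_G$ with the given $S$ and $a$ yields the claimed identity, and the $\mu$-only specialization follows by taking $a$ to be the identity of the convolution ring, exactly as in the theorem. The main obstacle I expect is purely organizational: aligning the convention that only degrees divisible by $\Delta_G$ genuinely occur in $\cA_G$ with the formulation of Axiom $A^{\#}$, and verifying the nonvanishing of $Z_{\cA_G}$ at $u=-R_G^{\Delta_G}$ from the Bass determinant formula. No further analytic input beyond the graph prime number theorem is required.
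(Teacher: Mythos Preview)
Your proposal is correct and matches the paper's own argument essentially step for step: Sect.~\ref{Sect: application_graph_theory} verifies that $\cA_G$ is an Axiom~$A^{\#}$ semigroup via the Ihara zeta function, the Bass determinant formula \eqref{Eqn: Ihara_zeta_fun} (from which the nonvanishing of $Z_{\cA_G}$ is immediate since $\zeta_G$ has no zeros at all), and the graph prime number theorem, and then invokes Theorem~\ref{Thm: main_thm_convolution_Axiom A_sharp}. Your anticipated organizational issue with $\Delta_G$ is exactly the normalization the paper adopts when writing $G^{\#}(n)$ and $\pi_G^{\#}(n)$.
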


\subsection{Organization of this paper}
We will prove Theorem~\ref{Thm: main_thm_convolution_Axiom A_sharp} for Axiom $A^{\#}$ type arithmetical semigroup and Theorem~\ref{Thm: main_thm_convolution_Axiom A} for Axiom $A$ type arithmetical semigroup in Sect.~\ref{Sect: Proofs for semigps Axiom A_sharp} and Sect.~\ref{Sect: Free abe. semigp Axiom A}, respectively. Explicitly, to prove Theorem~\ref{Thm: main_thm_convolution_Axiom A_sharp}, we first give an analogous equidistribution result of the largest (in terms of degree $\partial(\cdot)$) prime factors of elements with the same degree in $\cG$ (Lemma~\ref{equidistribution_QSg_Axiom A_sharp}) and an analogue of Alladi's duality identity to $\cG$ (Lemma~\ref{duality_axiom A_sharp}). Next, we prove the intermediate theorem (Theorem~\ref{keylemma12combined}) and then Theorem~\ref{Thm: main_thm_convolution_Axiom A_sharp}. Due to the fact that the proofs of the results for Axiom $A$ type case use a similar strategy as that for Axiom $A^{\#}$ type case, we will sketch its proofs and only emphasis on key steps in Sect.~\ref{Sect: Free abe. semigp Axiom A}. Finally, in Sect.~\ref{Sect: application_details}, we explain the contents in Table~\ref{table: examples} in details, and also provide some more examples.  

\section{Proofs of main results for Axiom \texorpdfstring{$A^{\#}$}{A\#} type arithmetical semigroups}\label{Sect: Proofs for semigps Axiom A_sharp}
In this section, we discuss the situation of Axiom $A^{\#}$ type arithmetical semigroups. More precisely, there is a key result in this section, Theorem~\ref{keylemma12combined}, and then Theorem~\ref{Thm: main_thm_convolution_Axiom A_sharp} follows as an immediate consequence. 
Throughout this section, we fix $\cG$ to be an Axiom $A^{\#}$ type arithmetical semigroup, and adopt the definitions and notations as in Sect.~\ref{Sect: Axiom_A_sharp}. 

For any element $g\in \cG$, we take $d^+(g)\colonequals\max\set{ \partial(P): P|g}$ to be the largest degree of all prime factors of $g$, and $d^+(e_{\cG})=0$. Let
\begin{equation}\label{Defn of QSg for Axiom A_sharp}
    Q_S^{\#}(g)\colonequals\#\set{P\in S: \partial(P)=d^+(g), P|g}
\end{equation}
be the number of prime factors of $g$ in $S$ attaining the maximal degree $d^+(g)$. We will write $Q_S(g)\colonequals Q_S^{\#}(g)$ for simplicity when the context is clear. Then, with a similar process as in the proof of \cite[Theorem~4.4]{DuanWangYi2020} one can have the analogous asymptotic estimate for $Q_S(g)$ as follows.
\begin{lemma}\label{equidistribution_QSg_Axiom A_sharp} 
    If $S\subseteq \mathcal{P}$ has a natural density $\delta(S)$, then
    \begin{equation}\label{Eqn: equidistribution_QSg_Axiom A_sharp}
		\sum_{ \partial(g)=n}Q_S(g)=c_{\cG}\delta(S)q^{n}+o(q^{{n}}).
	\end{equation}
\end{lemma}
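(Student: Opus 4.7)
My plan is to swap the order of summation and then analyse the resulting expression via the zeta function of $\cG$, following the template of \cite[Thm.~4.4]{DuanWangYi2020}. Writing each contributing pair $(g, P)$ (with $P \in S$, $P \mid g$, and $\partial(P) = d^{+}(g)$) uniquely as $(P, h)$ where $h := g/P$ satisfies $\partial(h) = n - \partial(P)$ and every prime factor of $h$ has degree at most $\partial(P)$, I obtain the combinatorial identity
$$\sum_{\partial(g) = n} Q_{S}(g) \;=\; \sum_{m=1}^{n}\pi^{\#}_{\cG,S}(m)\,G^{\#}_{\le m}(n-m),$$
where $G^{\#}_{\le m}(k) := \#\{h \in \cG : \partial(h) = k,\ d^{+}(h) \le m\}$. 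Taking $S = \cP$ gives the unsigned analogue $\sum_g Q_{\cP}^{\#}(g)$.

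The argument then proceeds in two steps. Step one establishes the unsigned estimate $\sum_{m}\pi^{\#}_{\cG}(m)\,G^{\#}_{\le m}(n-m) = c_{\cG}q^{n} + o(q^{n})$. Setting $F_m(z) := \prod_{\partial(P')\le m}(1-z^{\partial(P')})^{-1}$, the telescoping identity
$$F_m(z) - F_{m-1}(z) = F_m(z)\bigl[1-(1-z^m)^{\pi^{\#}(m)}\bigr] = \pi^{\#}(m)\,z^{m}\,F_m(z) + O\bigl(\pi^{\#}(m)^{2}\,z^{2m}\,F_m(z)\bigr)$$
sums over $m$ to $Z_{\cG}(z) - 1$ on the left; the leading term on the right is the generating series $A(z) = \sum_m \pi^{\#}(m)\,z^m\, F_m(z)$, and the remainder has only a logarithmic singularity $O(\log(1/(1-qz)))$ at $z=1/q$, since $F_m(1/q)$ grows linearly in $m$ (by PNT, $\log F_m(1/q) = \sum_{k\le m}\pi^{\#}(k)q^{-k} = \log m + O(1)$) while $\pi^{\#}(m)^{2}q^{-2m}$ is of order $1/m^{2}$. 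Combined with the Axiom $A^{\#}$ expansion $Z_{\cG}(z) = c_{\cG}/(1-qz) + O(1)$ near $z = 1/q$, a Darboux/Tauberian extraction then yields the desired asymptotic.

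Step two reduces the $S$-case to step one by the standard split-at-threshold trick: given $\varepsilon>0$, choose $M$ with $|\pi^{\#}_{\cG,S}(m) - \delta(S)\pi^{\#}_{\cG}(m)| < \varepsilon\pi^{\#}_{\cG}(m)$ for all $m \ge M$. The tail range then contributes at most $\varepsilon$ times the unsigned estimate, hence $O(\varepsilon q^{n})$; the head range $m < M$ contributes $o(q^{n})$ because, for each fixed $m$, $F_m(z)$ is a finite rational function whose only singularities lie on the unit circle, so $G^{\#}_{\le m}(n-m)$ grows only polynomially in $n$. Letting $\varepsilon \to 0$ delivers $\sum_g Q_S(g) = \delta(S) c_{\cG} q^{n} + o(q^{n})$.

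The principal obstacle is step one: the trivial bound $G^{\#}_{\le m} \le G^{\#}$ yields only $O(q^{n}\log n)$ for the unsigned sum, so pinning down the sharp constant $c_{\cG}$ requires either the generating-function analysis sketched above, or equivalently an independent second-moment bound $\sum_{m}\pi^{\#}(m)^{2}G^{\#}_{\le m}(n-2m) = o(q^{n})$ controlling the count of elements with multiple maximal-degree prime factors; both inputs transfer from \cite[Thm.~4.4]{DuanWangYi2020} with Axiom $A^{\#}$ playing the role of the explicit zeta-function structure of function fields.
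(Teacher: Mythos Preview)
Your proposal is correct and follows essentially the same approach as the paper, which simply defers to \cite[Theorem~4.4]{DuanWangYi2020}; your outline is in fact considerably more detailed than what the paper provides. One small comment: the Darboux/Tauberian extraction you sketch in step one is a bit delicate to make fully rigorous (the remainder series has alternating signs and the individual $F_m$ are singular only on $|z|=1$, not at $z=1/q$), whereas the alternative you mention---the direct second-moment bound $\sum_m \binom{\pi^{\#}(m)}{2} G^{\#}_{\le m}(n-2m)=o(q^n)$, handled by splitting at a slowly growing threshold $m_0$ and using $G^{\#}_{\le m_0}(k)=O_{m_0}(\mathrm{poly}(k))$ for the head and $\pi^{\#}(m)^2 q^{-2m}\ll 1/m^2$ for the tail---is cleaner and avoids any appeal to singularity analysis.
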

We will also require an analogue of Alladi's duality property \cite[Lemma~1]{Alladi1977} to $\cG$ for our purpose.
\begin{lemma}\label{duality_axiom A_sharp}
	Suppose that $f:\N\to\C$ is an arithmetic function with $f(0)=0$. Then for any $g\in \cG$ we have
	\begin{equation}\label{Eqn: duality_axiom A_sharp}
		\sum_{h|g}\mu(h)1_{\mathfrak{D}(\cG, S)}(h)f(d_{-}(h))=-Q_S(g)f(d^+(g)).
	\end{equation}
	Here $1_{\mathfrak{D}(\cG,S)}$ is the indicator function on $\mathfrak{D}(\cG,S)$ defined as in \eqref{Definition of distingushable_Axiom A_sharp}.
\end{lemma}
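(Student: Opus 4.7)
The plan is to reduce to squarefree divisors and parametrize them as subsets of the set of distinct prime factors of $g$, then group the sum by the minimal-degree layer and exploit a binomial cancellation $(1-1)^N=0$. First I would note that $\mu(h)=0$ unless $h$ is squarefree, so only squarefree $h\mid g$ contribute. Writing the distinct prime factors of $g$ as $P_1,\ldots,P_k$ and sorting by degree, let $d_1<d_2<\cdots<d_m$ be the distinct values of $\partial(P_i)$, and partition the prime factors as $\mathcal{P}_j=\{P_i:\partial(P_i)=d_j\}$ with $n_j=|\mathcal{P}_j|$. Squarefree divisors of $g$ then correspond bijectively to subsets $T\subseteq\{P_1,\ldots,P_k\}$, with $\mu(h)=(-1)^{|T|}$.

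Next I would identify the distinguishability condition on the subset side. For $h\ne e_{\cG}$, let $j_T$ be the smallest $j$ with $T\cap\mathcal{P}_j\ne\emptyset$. Then $d_{-}(h)=d_{j_T}$, and $h$ is distinguishable exactly when $|T\cap\mathcal{P}_{j_T}|=1$; in that case $P_{\min}(h)$ is the unique element of $T\cap\mathcal{P}_{j_T}$, so $h\in\mathfrak{D}(\cG,S)$ iff that single prime lies in $S$. The term $h=e_{\cG}$ contributes $\mu(e_{\cG})f(0)=0$ and can be discarded.

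I would then group the sum by the layer index $j$. Setting $s_j=\#(\mathcal{P}_j\cap S)$ and $N_j=n_{j+1}+\cdots+n_m$, a contributing subset with minimal layer $j$ is of the form $\{P\}\cup T'$ with $P\in\mathcal{P}_j\cap S$ and $T'$ an arbitrary subset of the primes in strictly higher layers, so the layer-$j$ contribution is
\[
f(d_j)\,s_j\sum_{r=0}^{N_j}(-1)^{r+1}\binom{N_j}{r}=-f(d_j)\,s_j\,(1-1)^{N_j}.
\]
This is $0$ whenever $N_j>0$, and for the top layer $j=m$ we have $N_m=0$, $d_m=d^{+}(g)$, and $s_m=Q_S(g)$ by the definition in \eqref{Defn of QSg for Axiom A_sharp}, yielding exactly $-Q_S(g)f(d^{+}(g))$.

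The only genuine subtlety beyond Alladi's original duality is the presence of ties in the degree map, which is precisely the reason the distinguishable condition and the multiplicity $Q_S(g)$ appear; this will be handled cleanly by the binomial cancellation above, so I do not anticipate a serious obstacle.
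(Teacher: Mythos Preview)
Your proof is correct and is precisely the standard combinatorial argument underlying Alladi's duality: restrict to squarefree divisors, stratify by the minimal-degree layer, and kill all but the top layer via $(1-1)^{N_j}=0$. The paper itself does not spell this out; it simply cites \cite[Lemma~3.1]{DuanWangYi2020} and says the argument is the same, so your write-up is effectively what that reference contains, adapted to the present notation and the distinguishability/$Q_S$ bookkeeping needed to handle degree ties.
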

\begin{proof}
This directly generalizes \cite[Lemma~3.1]{DuanWangYi2020} and the desired result follows from a similar argument.
\end{proof}
Moreover, we will also need the following intermediate theorem.
\begin{theorem}\label{keylemma12combined}
	Let $f$ be any bounded arithmetic function with $f(0)=0$. Let $a$ be an arithmetic function as in Theorem~\ref{Thm: main_thm_convolution_Axiom A_sharp}. The followings are equivalent:
	\begin{equation}\label{Eqn: equidis of largest prime factors}
		\sum_{ \partial(g)=n}Q_S(g)f(d^+(g))\sim c_{\cG}\delta(S)q^{n}.
	\end{equation}	
	\begin{equation}\label{Eqn: analogue of main result in global function feild}
		-\lim_{n\to\infty}  \sum_{\smat{1\le \partial(g)\le n\\ g\in \mathfrak{D}(\cG,S)}}\frac{\mu(g)f(d_{-}(g))}{\|g\|}=\delta(S).
	\end{equation}
\begin{equation}\label{Eqn: to write into partial sum}
		-\lim_{n\to\infty}  \sum_{\smat{1\le \partial(g)\le n\\ g\in \mathfrak{D}(\cG,S)}}\frac{\mu*a(g)f(d_{-}(g))}{\|g\|}=\delta(S).
	\end{equation}
\end{theorem}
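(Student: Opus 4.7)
The strategy is to establish the chain (\ref{Eqn: equidis of largest prime factors}) $\Leftrightarrow$ (\ref{Eqn: analogue of main result in global function feild}) $\Leftrightarrow$ (\ref{Eqn: to write into partial sum}) in two independent stages. The first equivalence uses Alladi's duality together with Axiom~$A^{\#}$; the second exploits the support condition on $a$ together with the $\log\log$ summability hypothesis. At no point will I appeal to extra analytic input beyond Lemma~\ref{duality_axiom A_sharp}, Lemma~\ref{equidistribution_QSg_Axiom A_sharp}, and the Axiom~$A^\#$ estimate $G^{\#}(m) = c_{\cG}q^{m} + O(q^{\eta m})$.

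For (\ref{Eqn: equidis of largest prime factors}) $\Leftrightarrow$ (\ref{Eqn: analogue of main result in global function feild}), apply Lemma~\ref{duality_axiom A_sharp} to each $g\in\cG$, divide by $\|g\|=q^{\partial(g)}$, and sum over $1\le\partial(g)\le n$. Writing $g=hr$ and exchanging the order of summation on the left, the inner $r$-sum equals $\sum_{\partial(r)\le n-\partial(h)}\|r\|^{-1}$, which by Axiom~$A^{\#}$ evaluates to $c_{\cG}(n-\partial(h))+B+o(1)$ for an explicit constant $B$. Meanwhile, setting $\Phi_S(m):=\sum_{\partial(g)=m}Q_S(g)f(d^+(g))$, the right side is exactly $-\sum_{m=1}^{n}\Phi_S(m)/q^{m}$. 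Under (\ref{Eqn: equidis of largest prime factors}) this is $-c_{\cG}\delta(S)n+o(n)$. Matching the coefficients of $n$ on both sides (the lower-order terms in $h$ are controlled by the convergence of $\sum_h\mu(h)\partial(h)f(d_-(h))/\|h\|$ via Axiom~$A^\#$) yields (\ref{Eqn: analogue of main result in global function feild}); the reverse direction is obtained by running the same computation backwards and applying a Tauberian argument built from Axiom~$A^\#$ to recover the asymptotic $\Phi_S(n)\sim c_{\cG}\delta(S)q^{n}$.

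For (\ref{Eqn: analogue of main result in global function feild}) $\Leftrightarrow$ (\ref{Eqn: to write into partial sum}), write $\mu\ast a=\mu+\mu\ast b$ with $b:=a-\delta_{e_{\cG}}$, so that $b$ vanishes at $e_{\cG}$ and is supported on $\mathfrak{D}(\cG,S)\setminus\{e_{\cG}\}$; the difference between the two partial sums is
\[
\Delta_n \;=\; \sum_{\substack{g\in\mathfrak{D}(\cG,S)\\ 1\le\partial(g)\le n}}\frac{(\mu\ast b)(g)\,f(d_-(g))}{\|g\|}.
\]
Expand $(\mu\ast b)(g)=\sum_{h\mid g}\mu(h)b(g/h)$, set $k=g/h\in\mathfrak{D}(\cG,S)\setminus\{e_\cG\}$, and swap summation. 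For each fixed $k$ with $P_0=P_{\min}(k)$, the inner sum over squarefree $h$ with $hk\in\mathfrak{D}(\cG,S)$ splits into three sub-cases: $d_-(h)>\partial(P_0)$; $h=P_0 h'$ with $d_-(h')>\partial(P_0)$; and $h\in\mathfrak{D}(\cG,S)$ with $d_-(h)<\partial(P_0)$. The first two sub-cases combine to produce a factor $(1-\|P_0\|^{-1})$ multiplying a truncated M\"obius sum, and the third is itself an Alladi-type sum handled by the first equivalence applied to the smaller scale $<\partial(P_0)$. Axiom~$A^{\#}$ controls both contributions and bounds the inner sum by $O(|b(k)|\log\log\|k\|/\|k\|)$ uniformly in $n$. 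Summing over $k$ and invoking the hypothesis $\sum_k|a(k)|\|k\|^{-1}\log\log\|k\|<\infty$ then gives $\Delta_n\to 0$.

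The main obstacle is the second stage. The case analysis determining when $hk\in\mathfrak{D}(\cG,S)$ is delicate because the convolution mixes elements inside and outside $\mathfrak{D}(\cG,S)$; this is exactly the "new difficulty" flagged in the introduction as not being handled by the method of \cite{Wang2020jnt}. The bookkeeping of the $(1-\|P_0\|^{-1})$ cancellation in combination with the first equivalence is what produces the sharp $\log\log\|k\|$ growth on the inner sums, and this growth is matched precisely by the $\log\log$ factor in the summability hypothesis on $a$, so no slack is available: the argument hinges on this exact calibration.
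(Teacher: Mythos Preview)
Your two-stage outline matches the paper's, but both stages contain genuine gaps that stem from the same overreach: you assert that ``at no point will I appeal to extra analytic input beyond Lemma~\ref{duality_axiom A_sharp}, Lemma~\ref{equidistribution_QSg_Axiom A_sharp}, and the Axiom~$A^\#$ estimate.'' The paper cannot and does not proceed on that basis; it needs the M\"obius-sum estimates of Lemmas~\ref{Lemma of improving Cha2017}--\ref{lemma 2 for R(x y)}, which come from the zeta function (a zero-free disk for $Z_{\cG}$).

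\textbf{First stage.} You sum the duality identity over $1\le\partial(g)\le n$ with weight $\|g\|^{-1}$, so the inner $r$-sum becomes $\sum_{\partial(r)\le n-\partial(h)}\|r\|^{-1}=c_{\cG}(n-\partial(h))+O(1)$. Plugging this in gives a relation of the shape $c_{\cG}(n+1)A_n-c_{\cG}B_n+O(A_n^{\mathrm{abs}})$, where $A_n$ is the partial sum in \eqref{Eqn: analogue of main result in global function feild}, $B_n=\sum_{\partial(h)\le n}\partial(h)\,\alpha_h$, and $A_n^{\mathrm{abs}}=\sum_{\partial(h)\le n}|\alpha_h|$. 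But $A_n^{\mathrm{abs}}$ can be of order $n$, and by Abel summation $(n+1)A_n-B_n=\sum_{k\le n}A_k+O(1)$: your identity is a \emph{Ces\`aro} statement, and ``matching coefficients of $n$'' recovers only $\tfrac{1}{n}\sum_{k\le n}A_k\to -\delta(S)$, not $A_n\to -\delta(S)$. You also claim $\sum_h\mu(h)\partial(h)f(d_-(h))/\|h\|$ converges ``via Axiom~$A^{\#}$'', but Axiom~$A^{\#}$ alone gives no cancellation in $\mu$. The paper avoids this by summing over $\partial(g)=n$ (not $\le n$), which produces the error term $S_4=\sum_{\partial(h)\le n}\mu(h)1_{\mathfrak D}(h)f(d_-(h))\,O(q^{\eta(n-\partial(h))})$; showing $S_4=o(q^n)$ is exactly where Lemma~\ref{averagemu} (hence Lemmas~\ref{Lemma of improving Cha2017}--\ref{Corollary for uniform bound of M(n, m)}) is used.

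\textbf{Second stage.} Your case split on $h$ according to whether $d_-(h)>\partial(P_0)$, $h=P_0h'$ with $d_-(h')>\partial(P_0)$, or $h\in\mathfrak D(\cG,S)$ with $d_-(h)<\partial(P_0)$ is correct and is a reasonable reorganisation. Using only $R(n,m)=O(1)$ and $\sum_{\partial(P)<m_0}\|P\|^{-1}=O(\log m_0)$ one indeed gets the inner $h$-sum bounded by $O(\log\log\|k\|)$ \emph{uniformly in $n$}. But a uniform bound only yields $\Delta_n=O(1)$; to get $\Delta_n\to 0$ you must show the inner sum tends to $0$ for each fixed $k$ (so that dominated convergence applies), and this needs $R(N,m_0)\to 0$, i.e.\ convergence of $\sum_{d_-(g)>m_0}\mu(g)/\|g\|$. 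That convergence is not a consequence of Axiom~$A^{\#}$; the paper obtains it (with a rate) in Lemma~\ref{lemma 2 for R(x y)} via Lemma~\ref{Lemma of improving Cha2017}. The paper's own organisation of this stage is different---it factors out $P_{\min}(g)$ first and splits the convolution sum by whether $P_{\min}(g)\mid r$, then cuts at height $[\log\log n]$ and feeds in both $R(n,m)=O(1)$ and the refined bound $R(n,m)\ll q^{(\eta-1+\varepsilon)n}$---but either organisation ultimately needs that refined input.
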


Before going into the details of proof of Theorem~\ref{keylemma12combined}, we would like to give the proof of Theorem~\ref{Thm: main_thm_convolution_Axiom A_sharp} based on this theorem. 

\begin{proof}[Proof of Theorem~\ref{Thm: main_thm_convolution_Axiom A_sharp}]
The desired result \eqref{Eqn: Convolution_mu_a_Axiom A_sharp} follows immediately from Lemma~\ref{equidistribution_QSg_Axiom A_sharp} and Theorem~\ref{keylemma12combined} by taking $f(n)=1$ for all $n\in\Z_{\geq 1}$.
\end{proof}
We will separate the proof of Theorem~\ref{keylemma12combined} into two parts: ``$\eqref{Eqn: equidis of largest prime factors}\Leftrightarrow \eqref{Eqn: analogue of main result in global function feild}$" and ``$\eqref{Eqn: analogue of main result in global function feild}\Leftrightarrow \eqref{Eqn: to write into partial sum}$". 
\subsection{Proof of \texorpdfstring{``$\eqref{Eqn: equidis of largest prime factors}\Leftrightarrow \eqref{Eqn: analogue of main result in global function feild}$"}{}}\label{Sect. for Proof of the first Key Theorem} First, we prove the following three lemmas which estimate several partial sums involving the general Möbius function $\mu$.

\begin{lemma}\label{Lemma of improving Cha2017}
Let $\cG$ be an Axiom $A^{\#}$ type arithmetical semigroup. Then for some $0\leq \eta<1$ we have
 \begin{equation}\label{Eqn: finite_sum_of_mu}
     \sum_{\partial(g)=n}\mu(g)=O(q^{\eta n}).
 \end{equation}
\end{lemma}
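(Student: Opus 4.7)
My plan is to analyze the generating function
\[
F(z) := \sum_{g\in\cG}\mu(g)z^{\partial(g)} = \prod_{P\in\cP}\bigl(1-z^{\partial(P)}\bigr) = \frac{1}{Z_\cG(z)},
\]
and extract its $n$-th Taylor coefficient via Cauchy's integral formula on a circle $|z|=r$ with $q^{-1}<r<1$. Once $F$ is known to be holomorphic on some disk $|z|<q^{-\eta_1}$ with $\eta_1<1$, shrinking the contour slightly inside this disk yields
\[
\Bigl|\sum_{\partial(g)=n}\mu(g)\Bigr| = \Bigl|\frac{1}{2\pi i}\oint_{|z|=r}\frac{F(z)}{z^{n+1}}\,dz\Bigr| \ll r^{-n},
\]
which is the desired $O(q^{\eta n})$ bound for any $\eta$ slightly larger than $\eta_1$.

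The first step is to use Axiom $A^\#$ to exhibit the meromorphic structure of $Z_\cG$. Writing $G^\#(n)=c_\cG q^n + E(n)$ with $E(n)=O(q^{\eta_0 n})$ for some $0\leq\eta_0<1$, the series $H(z):=\sum_{n\geq 0}E(n)z^n$ is holomorphic in $|z|<q^{-\eta_0}$, so
\[
Z_\cG(z)=\frac{c_\cG}{1-qz}+H(z), \qquad W(z) := (1-qz)Z_\cG(z) = c_\cG + (1-qz)H(z)
\]
is holomorphic on the same disk with $W(q^{-1})=c_\cG>0$. Hence $F(z)=(1-qz)/W(z)$, and the simple pole of $Z_\cG$ at $z=q^{-1}$ is cancelled, so $F$ admits a meromorphic continuation past the critical circle $|z|=q^{-1}$ to the larger disk $|z|<q^{-\eta_0}$ with poles precisely at the zeros of $W$.

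The crux is then to locate a zero-free closed disk $|z|\leq q^{-\eta_1}$ for some $\eta_0\leq\eta_1<1$. For $|z|<q^{-1}$ the Euler product converges absolutely, so $Z_\cG(z)\neq 0$ and hence $W(z)\neq 0$ there. On the boundary circle $|z|=q^{-1}$, continuity of $W$ together with $W(q^{-1})=c_\cG\neq 0$ and $W(-q^{-1})=2Z_\cG(-q^{-1})\neq 0$ (via the standing hypothesis) handle the two real points, and a standard non-vanishing argument for $Z_\cG$ along the critical circle rules out the remaining ones, analogous to the classical non-vanishing of $\zeta(s)$ on $\Re(s)=1$ used in the PNT derivation from Axiom $A^\#$ in \cite{KnopfmacherZhang2001}. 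Since $W$ is analytic on $|z|<q^{-\eta_0}$ and nowhere zero on the compact circle $|z|=q^{-1}$, a compactness argument yields a thin annular neighborhood of this circle on which $W$ also does not vanish; picking $\eta_1\in(\eta_0,1)$ so that $W$ stays nonzero on $|z|\leq q^{-\eta_1}$ completes the analytic extension.

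The main obstacle I expect is precisely this non-vanishing statement for $W$ on a disk strictly larger than $|z|\leq q^{-1}$: the hypothesis $Z_\cG(-q^{-1})\neq 0$ is tailored to eliminate the one unavoidable candidate zero on the critical circle, but excluding additional zeros requires a PNT-style argument, and one may need to choose $\eta_1$ conservatively so as to avoid possible zeros of $W$ within the annulus $q^{-1}<|z|<q^{-\eta_0}$. Everything else is then a routine application of Cauchy's coefficient formula.
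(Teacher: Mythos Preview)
Your proposal is correct and follows essentially the same approach as the paper: both identify $\sum_{\partial(g)=n}\mu(g)$ as the $n$-th coefficient of $1/Z_{\cG}(z)$, argue that $Z_{\cG}$ is zero-free on some closed disk $|z|\le q^{-\eta}$ with $\eta<1$, and then extract the coefficient via Cauchy's integral formula on that circle. The paper is terser, simply asserting the existence of such an $\eta$ and deferring the argument to \cite{IM91}, whereas you spell out the meromorphic continuation via $W(z)=(1-qz)Z_{\cG}(z)$ and the compactness step; but the underlying strategy is identical.
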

\begin{proof}
It is clear that for $\Re(s)>1$ we have
$$
\frac{1}{\zeta_{\cG}(s)}=\sum_{g\in\cG}\frac{\mu(g)}{\|g\|^s}=\sum_{\partial(g)\geq 0} \frac{\mu(g)}{q^{\partial(g)s}}.
$$
By taking $T=q^{-s}$, we can rewrite this series as
$$Z_{\cG}(T)=\sum_{n=0}^{\infty}C_{\mu}(n)T^n,$$
where $C_{\mu}(n)=\sum_{\partial(g)=n}\mu(g)$. Consider the following integral
\begin{equation}
    \frac{1}{2\pi i}\int_{|T|=q^{-\eta }}\frac{1}{Z_{\cG}(T)}\frac{dT}{T^{(n+1)}}=C_{\mu}(n),
\end{equation}
where we can choose $0\leq \eta<1$ such that $Z_{\cG}(T)$ has no zeros in the closed disk $|T|\le q^{-\eta}$. Then using a similar argument as in the proof of \cite[Theorem~1]{IM91}, we have that
\begin{equation}
    C_{\mu}(n)=\sum_{\partial(g)=n}\mu(g)=O(q^{\eta n}).
\end{equation}
\end{proof}
\begin{lemma}\label{Corollary for uniform bound of M(n, m)}
For any $m, n\in\Z_{\geq 0}$, define
\begin{equation}
    C(n,m)\colonequals\sum_{\smat{\partial(g)=n\\d_{-}(g)>m}}\mu(g).
\end{equation}
Then there exists some $0\leq \eta<1$ such that
\begin{equation}\label{Eqn: Bound for C(n, m)_Axiom A_sharp}
    C(n,m)\ll q^{\eta n}\mathrm{exp}(q^m).
\end{equation}
In particular, we have 
\begin{equation}\label{Eqn: Bound for M(n, m)_Axiom A_sharp}
    M(n,m)\colonequals\sum_{\smat{\partial(g)\le n\\d_{-}(g)>m}}\mu(g)\ll q^{\eta n}\mathrm{exp}(q^m).
\end{equation}
\end{lemma}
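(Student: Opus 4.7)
My plan is to mimic the generating function approach used in Lemma~\ref{Lemma of improving Cha2017}, but now keep track of the additional constraint $d_{-}(g) > m$ via a partial Euler product. Define
\[
Z_{\leq m}(T)\colonequals\prod_{\partial(P)\leq m}\bigl(1-T^{\partial(P)}\bigr)^{-1},\qquad
Z_{\cG}(T)=\prod_{P}\bigl(1-T^{\partial(P)}\bigr)^{-1}.
\]
Since an element $g\in\cG$ satisfies $d_{-}(g)>m$ iff no prime factor of $g$ has degree $\leq m$, the identity
\[
\sum_{n\geq 0} C(n,m)\,T^{n}
=\prod_{\partial(P)>m}\bigl(1-T^{\partial(P)}\bigr)
=\frac{Z_{\leq m}(T)}{Z_{\cG}(T)}
\]
holds as formal power series (and as analytic functions on a suitable disk). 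I would then apply Cauchy's formula on the circle $|T|=q^{-\eta}$, choosing $0\leq\eta<1$ exactly as in the previous lemma so that $1/Z_{\cG}(T)$ is holomorphic and uniformly bounded on the closed disk $|T|\leq q^{-\eta}$; note $Z_{\leq m}$ is a finite product with poles only on $|T|=1$, so it is holomorphic there too. This gives
\[
C(n,m)=\frac{1}{2\pi i}\oint_{|T|=q^{-\eta}}\frac{Z_{\leq m}(T)}{Z_{\cG}(T)}\,\frac{dT}{T^{\,n+1}}.
\]

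The central estimate is therefore a uniform bound on $|Z_{\leq m}(T)|$ along the contour. Using the standard inequalities $|\log(1-z)|\leq -\log(1-|z|)\leq |z|/(1-|z|)$ valid for $|z|<1$, I would bound
\[
\log|Z_{\leq m}(T)|\leq \sum_{\partial(P)\leq m}\frac{q^{-\eta\partial(P)}}{1-q^{-\eta}}
\ll \sum_{d=1}^{m}\pi^{\#}_{\cG}(d)\,q^{-\eta d}.
\]
By the prime number theorem for Axiom $A^{\#}$ semigroups (see Table~\ref{table: examples}), $\pi^{\#}_{\cG}(d)\ll q^{d}/d$, so the right-hand side is bounded by a constant multiple of the geometric-type sum $\sum_{d=1}^{m}q^{(1-\eta)d}/d\ll q^{(1-\eta)m}$. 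Hence $|Z_{\leq m}(T)|\ll \exp\!\bigl(Cq^{(1-\eta)m}\bigr)=O\!\bigl(\exp(q^{m})\bigr)$, the last step because $q^{(1-\eta)m}\leq q^{m}$ and the ratio $\exp(Cq^{(1-\eta)m}-q^{m})$ is bounded uniformly in $m\geq 0$. Combining this with the trivial contour-length factor and $|T^{-n-1}|=q^{\eta(n+1)}$, and with the uniform bound for $1/Z_{\cG}(T)$ on $|T|=q^{-\eta}$ supplied by the previous lemma, I obtain $|C(n,m)|\ll q^{\eta n}\exp(q^{m})$, which is \eqref{Eqn: Bound for C(n, m)_Axiom A_sharp}.

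For \eqref{Eqn: Bound for M(n, m)_Axiom A_sharp}, the observation is that $M(n,m)=\sum_{k=0}^{n}C(k,m)$ (with the convention $d_{-}(e_{\cG})=+\infty$, so the identity contributes $\mu(e_{\cG})=1=C(0,m)$). Applying the bound just established term by term and summing a geometric series in $k$ yields
\[
|M(n,m)|\leq\sum_{k=0}^{n}|C(k,m)|\ll \exp(q^{m})\sum_{k=0}^{n}q^{\eta k}\ll q^{\eta n}\exp(q^{m}),
\]
which is the desired estimate. The only delicate point is the choice of $\eta$: it must simultaneously ensure zero-freeness of $Z_{\cG}$ on the closed disk (as in Lemma~\ref{Lemma of improving Cha2017}) and be strictly less than $1$ so that $q^{(1-\eta)m}$ is genuinely smaller than $q^{m}$. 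This is where the hypothesis $Z_{\cG}(-q^{-1})\neq 0$ together with Axiom $A^{\#}$ are invoked; once such an $\eta$ is fixed, the rest is a routine bookkeeping of the contour integral.
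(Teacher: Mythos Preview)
Your proposal is correct and follows essentially the same route as the paper: both exploit the identity
\[
\sum_{n\geq 0}C(n,m)\,T^{n}=\frac{Z_{\leq m}(T)}{Z_{\cG}(T)},
\]
apply the Cauchy integral on $|T|=q^{-\eta}$ for an $\eta\in(0,1)$ on which $1/Z_{\cG}$ is holomorphic and bounded, and then bound the finite Euler factor $Z_{\leq m}(T)$; the passage to $M(n,m)$ via the geometric sum $\sum_{k\le n}q^{\eta k}\ll q^{\eta n}$ is identical. The only cosmetic difference is that the paper immediately discards the weight $\|P\|^{-\eta}\leq 1$ to get $-\sum_{\partial(P)\le m}\log(1-\|P\|^{-\eta})\ll\sum_{\partial(P)\le m}1\ll q^{m}$, whereas you retain the factor $q^{-\eta d}$ and obtain the sharper intermediate bound $\exp(Cq^{(1-\eta)m})$ before absorbing it into $\exp(q^{m})$; either way one needs $\eta>0$ (so that $1-q^{-\eta}\neq 0$), which is harmless since any admissible $\eta$ may be increased within $(0,1)$.
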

\begin{proof}
As in the proof of Lemma~\ref{Lemma of improving Cha2017}, if we consider the function 
\begin{equation}
\frac{1}{\zeta_{\cG}(s)}\prod_{\partial(P)\leq m}(1-\|P\|)^{-s})^{-1}=\sum_{\smat{g\in\cG\\d_{-}(g)>m}}\frac{\mu(g)}{\|g\|^s},  
\end{equation}
then one can see that
	\begin{equation}
		C(n,m)\ll q^{\eta n}\prod_{\partial(P)\le m}\of{1-{\|P\|^{-\eta}}}^{-1}
	\end{equation}
	uniformly for $n,m\ge1$. By the abstract prime number theorem (e.g. see \cite[Chap.~8]{Knopfmacher1979}, \cite{Cohen1989}), for the product over $\partial(P)\le m$ we have that
	$$-\sum_{\partial(P)\le m}\log (1-\|P\|^{-\eta})\ll \sum_{\partial(P)\le m}\|P\|^{-\eta}\ll\sum_{\partial(P)\le m}1\ll\sum_{k\le m} q^{k}/k
	\ll q^{m}.$$
Thus, the desired result \eqref{Eqn: Bound for C(n, m)_Axiom A_sharp} follows. As for the estimate \eqref{Eqn: Bound for M(n, m)_Axiom A_sharp}, we have
$$M(n, m)=\sum_{\ell\leq n}C(\ell, m)\ll \mathrm{exp}(q^m)\sum_{\ell \leq n}q^{\eta \ell}=q^{\eta n}\mathrm{exp}(q^m)\sum_{\ell \leq n}q^{\eta (\ell-n)}\ll q^{\eta n}\mathrm{exp}(q^m).$$
\end{proof}
\begin{lemma}\label{averagemu}
	For any bounded function $f$,	we have
	\begin{equation}\label{averagemueq}
		\sum_{\partial(g)\le n}\mu(g)1_{\mathfrak{D}(\cG,S)}(g)f(d_{-}(g))=O_{f,\cG}\of{\frac{q^{n}}{\log\log n}}.
	\end{equation}
	
\end{lemma}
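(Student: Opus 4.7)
The plan is to factor each $g\in\mathfrak{D}(\cG,S)\setminus\{e_\cG\}$ uniquely as $g=P h$ where $P=P_{\min}(g)\in S$ is the unique min-degree prime factor (of degree $m=\partial(P)$) and $d_-(h)>m$; since $\mu(g)\neq 0$ forces $g$ to be squarefree and hence $P\nmid h$, one has $\mu(g)=-\mu(h)$. Gathering contributions by $P$ and noting $f(d_-(g))=f(m)$, this yields
\[
\sum_{\substack{\partial(g)\le n\\ g\in\mathfrak{D}(\cG,S)}}\mu(g)\,1_{\mathfrak{D}(\cG,S)}(g)\,f(d_-(g)) \;=\; O_f(1)\;-\;\sum_{\substack{P\in S\\ \partial(P)\le n}} f(\partial(P))\,M(n-\partial(P),\partial(P)),
\]
where $M(\cdot,\cdot)$ is as in Lemma~\ref{Corollary for uniform bound of M(n, m)} and $O_f(1)$ absorbs the $g=e_\cG$ term. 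Taking absolute values and using $|f|\le\|f\|_\infty$, it suffices to show $\sum_{\partial(P)\le n}|M(n-\partial(P),\partial(P))|\ll q^n/\log\log n$.

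I would split the outer sum over $P$ at the thresholds $T:=\lfloor\log_q\log n\rfloor$ and $n/2$, using a different pointwise estimate on $|M(n-m,m)|$ in each regime. For \emph{small primes} $\partial(P)\le T$ (so $q^{\partial(P)}\le \log n$), Lemma~\ref{Corollary for uniform bound of M(n, m)} gives $|M(n-m,m)|\ll q^{\eta(n-m)}\exp(q^m)\le q^{\eta n}\cdot n$; summing with $\pi^\#_\cG(T)\ll q^T/T$ yields a contribution $\ll q^{\eta n}\cdot n\log n/\log\log n$, which is $o(q^n/\log\log n)$ since $\eta<1$. For \emph{large primes} $\partial(P)>n/2$, the conditions $\partial(h)\le n-m<m$ and $d_-(h)>m$ force $h=e_\cG$, so $M(n-m,m)=1$ exactly and the total contribution is $\le\sum_{m>n/2}\pi^\#_\cG(m)\ll q^n/n$. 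The delicate \emph{intermediate range} $T<\partial(P)\le n/2$ is handled by the sieve-type bound $|M(n-m,m)|\le\Psi(n-m,m)\ll q^{n-m}/m$, where $\Psi(n-m,m):=\#\{g:\partial(g)\le n-m,\,d_-(g)>m\}$; summing gives $\ll q^n\sum_{m>T}m^{-2}\ll q^n/T=O(q^n/\log\log n)$, as required.

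The main obstacle will be justifying the sieve bound $\Psi(n-m,m)\ll q^{n-m}/m$ within the abstract Axiom~$A^\#$ framework, since it is not one of the lemmas already recorded. Using the generating-function identity
\[
\sum_{d_-(g)>m} T^{\partial(g)} \;=\; Z_\cG(T)\prod_{\partial(P)\le m}\bigl(1-T^{\partial(P)}\bigr)
\]
and extracting coefficients around the simple pole of $Z_\cG(T)$ at $T=q^{-1}$ (guaranteed by Axiom~$A^\#$), the desired bound reduces to the Mertens estimate $\prod_{\partial(P)\le m}(1-\|P\|^{-1})\ll 1/m$, equivalently $\sum_{\partial(P)\le m}\|P\|^{-1}=\log m+O(1)$, which is a standard consequence of the abstract prime number theorem already invoked in the proof of Lemma~\ref{Corollary for uniform bound of M(n, m)}. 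Once this analytic ingredient is installed in the semigroup setting, balancing the three ranges above yields the lemma.
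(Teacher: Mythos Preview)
Your overall architecture matches the paper's: factor $g=P_{\min}(g)\cdot h$, rewrite the sum as $-\sum_{P\in S}f(\partial(P))M(n-\partial(P),\partial(P))$, treat small primes via Lemma~\ref{Corollary for uniform bound of M(n, m)}, and treat large primes by counting. The small-prime range and the range $\partial(P)>n/2$ are handled correctly.

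The gap is in your intermediate range $T<\partial(P)\le n/2$. You assert $\Psi(n-m,m)\ll q^{n-m}/m$ uniformly and propose to prove it by ``extracting coefficients around the simple pole'' of $Z_\cG(T)\prod_{\partial(P)\le m}(1-T^{\partial(P)})$. But this is the Legendre sieve in disguise, and it does \emph{not} give that bound for $m$ beyond $O(\log n)$: writing $G^{\#}(k)=c_\cG q^k+O(q^{\eta k})$ and expanding, the main term indeed contributes $c_\cG q^{n-m}\prod_{\partial(P)\le m}(1-\|P\|^{-1})\ll q^{n-m}/m$, but the error term contributes
\[
O\!\left(q^{\eta(n-m)}\prod_{\partial(P)\le m}\bigl(1+q^{-\eta\partial(P)}\bigr)\right)=O\!\left(q^{\eta(n-m)}\exp\bigl(c\,q^{(1-\eta)m}\bigr)\right),
\]
which swamps $q^{n-m}/m$ once $m$ is larger than a constant times $\log n$. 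So your justification fails precisely in the range where you invoke it. (The bound $\Psi(n',m)\ll q^{n'}/m$ \emph{is} true uniformly for $m\le n'$, but it requires a genuine sieve argument---Brun or a Buchstab iteration---not coefficient extraction from the Legendre identity.)

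The paper sidesteps this entirely. Rather than bounding each $|M(n-\partial(P),\partial(P))|$ pointwise and summing, it observes that the sets $\{g:\partial(g)\le n,\ P_{\min}(g)=P\}$ are \emph{disjoint} as $P$ varies, so for a single threshold $m$
\[
\sum_{\substack{m<\partial(P)\le n\\ P\in S}}\ \sum_{\substack{\partial(g)\le n\\ P_{\min}(g)=P}}1 \ \le\ \Phi(n,m):=\#\{g:\partial(g)\le n,\ d_-(g)>m\}.
\]
Now the Legendre sieve is applied \emph{once}, at level $m$, giving $\Phi(n,m)\ll q^n/m+n\,2^{c_\cG q^m}$; with $m=[\log\log n]$ both terms are $O(q^n/\log\log n)$. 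Your three-range split and the uniform $\Psi$-bound are thus unnecessary---collapsing your intermediate and large ranges and using disjointness recovers the paper's argument and closes the gap.
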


\begin{proof} 
First, we break up the sum based on the degree of the minimal prime factor $P_{\min}(g)$ of $g\in \mathfrak{D}(\cG,S)$.
	\begin{align}\label{pfmusetup}
		\sum_{\partial(g)\le n}\mu(g)1_{\mathfrak{D}(\cG,S)}(g)f(d_{-}(g))&=\sum_{\smat{\partial(P)\le n\\ P\in S}}f(\partial(P))\sum_{\smat{\partial(g)\le n\\ P_{\min}(g)=P}}\mu(g)\nonumber\\
		&=\sum_{\smat{\partial(P)\le m\\ P\in S}}f(\partial(P))\sum_{\smat{\partial(g)\le n\\
				P_{\min}(g)=P}}\mu(g)\nonumber\\
		&\qquad+ \sum_{\smat{m<\partial(P)\le n\\ P\in S}}f(\partial(P))\sum_{\smat{\partial(g)\le n\\
				P_{\min}(g)=P}}\mu(g)\nonumber\\
		&\colonequals S_1+S_2,
	\end{align}
	where $m$ is to be chosen later. For the inside sum, observe that
	\begin{equation}\label{Eqn: torewriteasPsi}
	    \sum_{\smat{\partial(g)\le n\\P_{\min}(g)=P}}\mu(g)=-\sum_{\smat{\partial(g)-\partial(P)\le n\\d_{-}(g)>\partial(P)}}\mu(g).
	\end{equation}
Thus, by \eqref{Eqn: torewriteasPsi}, Lemma~\ref{Corollary for uniform bound of M(n, m)} and the abstract prime number theorem for $\cG$, for $S_1$ we have
	\begin{align}\label{s6estimate}
		S_1&=-\sum_{\smat{\partial(P)\le m\\ P\in S}}f(\partial(P))M(n-\partial(P), \partial(P))\\
		&\ll \sum_{\partial(P)\le m}|M(n-\partial(P), \partial(P))|\nonumber\\
		&\ll\sum_{\partial(P)\le m} q^{\eta(n-\partial(P))}\exp\of{q^{\partial(P)}}\nonumber\\
		&\ll q^{\eta n}\sum_{1\le k\le m}\frac{q^{k}}{k}\cdot q^{-\eta k}\exp\of{q^{k}}\nonumber\\
		&\ll q^{\eta n+(1-\eta)m}\exp\of{q^{m}}/m.
	\end{align}	
	For $S_2$, we have
	\begin{equation}\label{s7-1}
		S_2\ll \sum_{m<\partial(P)\le n}\sum_{\smat{\partial(g)\le n\\
				P_{\min}(g)=P}}1\le \Phi(n,m),
	\end{equation}
where $\Phi(n,m)\colonequals\sum_{\smat{\partial(g)\le n\\d_{-}(g)>m}}1$ for $m,n\ge1$. Moreover, by the sieve of Eratosthenes, we have
	\begin{equation}
		\Phi(n,m)=c_{\cG}q^{n}\prod_{\partial(P)\le m}\of{1-\|P\|^{-1} }+O\of{n\,2^{c_{\cG}q^{m}}}.
	\end{equation}
Again by the abstract prime number theorem for $\cG$, with a standard computation we obtain that
	\begin{equation}\label{s7-2}
		\Phi(n,m)\ll\frac{q^{n}}{m}+n\,2^{c_{\cG}q^{m}}.
	\end{equation}
	Taking  $m=[\log\log n]$ and combining \eqref{pfmusetup}, \eqref{s6estimate}, \eqref{s7-1}, and \eqref{s7-2} together, the desired estimate (\ref{averagemueq}) follows.
\end{proof}

Now we are ready to prove ``$\eqref{Eqn: equidis of largest prime factors}\Leftrightarrow \eqref{Eqn: analogue of main result in global function feild}$".
\begin{proof}[Proof of ``$\eqref{Eqn: equidis of largest prime factors}\Leftrightarrow \eqref{Eqn: analogue of main result in global function feild}$'']\label{Proof of Theorem keylemma}
	By Lemma~\ref{duality_axiom A_sharp}, we	have
	\begin{align}
		-\delta(S)c_{\cG}q^{n}&\sim -\sum_{\partial(g)=n}Q_S(g)f(d^+(g))\nonumber\\
		&=\sum_{ \partial(g)=n}\sum_{g\ge h}\mu(h)1_{\mathfrak{D}(\cG,S)}(h)f(d_{-}(h))\nonumber\\
		&=\sum_{\partial(h)\le n} \mu(h)1_{\mathfrak{D}(\cG,S)}(h)f(d_{-}(h)) \sum_{\partial(r)=n-\partial(h)}1\nonumber\\
		&=\sum_{\partial(h)\le n} \mu(h)1_{\mathfrak{D}(\cG,S)}(h)f(d_{-}(h)) \left(c_{\cG}\frac{q^{n}}{\|h\|}+O(q^{\eta (n-\partial(h))})\right) \nonumber\\
		&=\sum_{\partial(h)\le n} \frac{\mu(h)1_{\mathfrak{D}(\cG,S)}(h)f(d_{-}(h))}{\|h\|} c_{\cG}q^{n}+\sum_{\partial(h)\le n} \mu(h)1_{\mathfrak{D}(\cG,S)}(h)f(d_{-}(h))O(q^{\eta (n-\partial(h))}) \nonumber\\
		&\colonequals S_3+S_4.
	\end{align}
To prove ``$\eqref{Eqn: equidis of largest prime factors}\Leftrightarrow \eqref{Eqn: analogue of main result in global function feild}$", it suffices to show that $S_4=o(q^n)$. In fact, for $S_4$ we have
\begin{align*}
    S_4&=\sum_{\partial(h)\le n} \mu(h)1_{\mathfrak{D}(\cG,S)}(h)f(d_{-}(h))O(q^{\eta (n-\partial(h))})\\
    &=\sum_{\partial(h)\le n-N_0} \mu(h)1_{\mathfrak{D}(\cG,S)}(h)f(d_{-}(h))O(q^{\eta (n-\partial(h))})+\sum_{N_0<\partial(h)\le n} \mu(h)1_{\mathfrak{D}(\cG,S)}(h)f(d_{-}(h))O(q^{\eta (n-\partial(h))})\\
    &\colonequals S_5+S_6.
\end{align*}
It follows from Lemma~\ref{averagemu} and the straightforward calculations by taking $N_0=(\eta+\varepsilon)n<n$ for arbitrary $\varepsilon>0$ (for example, we can take $\varepsilon=(1-\eta)/2>0$) that $S_5=o(q^n)$ and $S_6=o(q^n)$ as desired.
\end{proof}
\subsection{Proof of \texorpdfstring{``$\eqref{Eqn: analogue of main result in global function feild}\Leftrightarrow \eqref{Eqn: to write into partial sum}$"}{}}\label{Sect. for Proof of the second Key Theorem} Our proof relies on the estimate of $R(n,m)$ defined by
\begin{equation}\label{R(x y)}
    R(n, m)\colonequals\sum_{\smat{0\leq \partial(g)\le n\\d_{-}(g)>m}}\frac{\mu(g)}{\|g\|}, 
\end{equation}
where $m,n$ are a pair of non-negative integers. We set $P_{\min}(e_{\cG})=\infty$ for convenience. First, we give an elementary bound for $R(n, m)$ in the following lemma. 

\begin{lemma}\label{lemma 1 for abs. value. of R(x y)}
For any $n, m\geq 0$, we have that
\begin{equation}\label{Eqn: elementrary_bound_for_R(n,m)}
R(n, m)=O(1).
\end{equation}
\end{lemma}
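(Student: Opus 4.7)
My plan is to exploit the unique factorization of each $g \in \cG$ into an ``$m$-smooth'' part $s$ (with $d_+(s) \leq m$) and an ``$m$-rough'' part $t$ (with $d_-(t) > m$), together with a convolution inversion, and then bound the resulting sum using decay of a Mertens-type partial sum. Since $s$ and $t$ share no prime factors, $\mu(g) = \mu(s)\mu(t)$ and $\|g\| = \|s\|\|t\|$. Regrouping $T(n) \colonequals \sum_{\partial(g) \leq n} \mu(g)/\|g\|$ according to the value of $s$ gives
\[
T(n) = \sum_{\substack{s\colon d_+(s) \leq m\\ \partial(s) \leq n}} \frac{\mu(s)}{\|s\|}\, R(n - \partial(s), m).
\]

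The next step is to invert this identity. Setting $\alpha(k) \colonequals \sum_{\partial(s) = k,\, d_+(s) \leq m} \mu(s)/\|s\|$ and $\beta(k) \colonequals \sum_{\partial(s) = k,\, d_+(s) \leq m} 1/\|s\|$, the M\"obius identity $\sum_{s_1 \mid u} \mu(s_1) = 0$ for $u \neq e_\cG$ yields $\alpha \ast \beta(n) = 0$ for $n \geq 1$ and $\alpha \ast \beta(0) = 1$, so $\alpha$ and $\beta$ are convolution inverses in the integer index. Convolving both sides with $\beta$ gives the inverted formula
\[
R(n, m) = \sum_{\substack{s\colon d_+(s) \leq m\\ \partial(s) \leq n}} \frac{T(n - \partial(s))}{\|s\|}.
\]

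Next I would quantify the decay of $T$. Under Axiom $A^{\#}$, $Z_\cG(z)$ has a simple pole at $z = 1/q$, so $1/Z_\cG(1/q) = 0$; combined with Lemma~\ref{Lemma of improving Cha2017}, which gives $C_\mu(k) = O(q^{\eta k})$, a standard Abel summation yields $|T(N)| \ll q^{-(1-\eta)N}$. Substituting into the inverted identity and using the Axiom $A^{\#}$ growth $G^{\#}(k) \ll q^k$, I obtain
\[
|R(n, m)| \ll q^{-(1-\eta)n} \sum_{k=0}^{n} G^{\#}(k)\, q^{-\eta k} \ll q^{-(1-\eta)n} \sum_{k=0}^{n} q^{(1-\eta)k} \ll 1
\]
uniformly in $n$ and $m$.

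The hard part will be recognizing that one should pass through $T(N)$ rather than attempting to bound $R(n, m)$ directly from Lemma~\ref{Corollary for uniform bound of M(n, m)}. A direct Abel summation using $M(k, m) \ll q^{\eta k}\exp(q^m)$ only gives the weak bound $R(n, m) \ll \exp(q^m)$, which is useless for uniformity in $m$. By contrast, the decay $|T(N)| \ll q^{-(1-\eta)N}$ exactly offsets the potentially large contribution of $\sum_{s\colon d_+(s) \leq m} 1/\|s\|$ in the inverted identity, allowing $m$ to drop out of the final estimate.
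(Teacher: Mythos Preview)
Your argument is correct, but it follows a genuinely different route from the paper's.

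The paper proceeds by counting $m$-smooth elements of degree $n$: using the identity $1_{\langle \cP^c_m\rangle}(g)=\sum_{r\mid g,\ r\in\langle\cP_m\rangle}\mu(r)$ and Axiom $A^{\#}$, one obtains
\[
\sum_{\substack{\partial(g)=n\\ g\ m\text{-smooth}}}1 \;=\; c_{\cG}q^{n}R(n,m)\;+\;O\!\left(q^{\eta n}\sum_{\partial(r)\le n}\|r\|^{-\eta}\right),
\]
and then simply bounds the left side by $G^{\#}(n)=O(q^n)$ and the error sum by $O(q^n)$ to conclude $R(n,m)=O(1)$. No information about the full Mertens sum $T(N)$ or the pole of $Z_{\cG}$ is needed at this stage.

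Your approach is in some sense dual: instead of summing over the rough part to count smooth elements, you sum over the smooth part, write $T=\alpha\ast R(\cdot,m)$, and invert via $\beta$. This is clean and the inversion is valid, but it forces you to import the decay $|T(N)|\ll q^{-(1-\eta)N}$, which in turn relies on the simple pole of $Z_{\cG}$ at $z=1/q$. In the paper that input is reserved for the \emph{refined} estimate (Lemma~\ref{lemma 2 for R(x y)}), so your proof of the elementary bound already uses essentially the same analytic strength as the refined one. The paper's route keeps Lemma~\ref{lemma 1 for abs. value. of R(x y)} truly elementary (only Axiom $A^{\#}$, no zeta input), while yours packages both lemmas into one mechanism; what you gain is a transparent convolution identity $R(n,m)=\sum_{s\ \text{smooth}}T(n-\partial(s))/\|s\|$ that could be reused, at the cost of a slightly heavier toolkit up front.
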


\begin{proof}
The approach is similar to that in \cite{Tao2010}. For $m\in\Z_{\geq 0}$, define $\mathcal{P}_m\colonequals\{P\mid \partial(P)>m\}$ and $\mathcal{P}^c_m\colonequals\{P\mid \partial(P)\leq m\}$. Let $\langle\mathcal{P}_m\rangle$ and $\langle\mathcal{P}^c_m\rangle$ be monoids generated by $\mathcal{P}_m$ and $\mathcal{P}^c_m$, respectively. It is easy to check that
\begin{equation}\label{Eqn: useful identity}
    1_{\langle\mathcal{P}^c_m\rangle}(g)=\sum_{\substack{r| g\\ r\in\langle \mathcal{P}_m\rangle}}\mu(r).
\end{equation}
Next, we consider the elements $g$ of degree $n$ and by \eqref{Eqn: useful identity} we have that
\begin{align*}
    \sum_{\substack{\partial(g)=n\\g\in\langle\mathcal{P}^c_m\rangle}}1=&\sum_{\partial(g)=n}\sum_{\substack{r| g\\ r\in\langle \mathcal{P}_m\rangle}}\mu(r)\\
    =&\sum_{\substack{\partial(r)\leq n\\ r\in\langle \mathcal{P}_m\rangle}}\mu(r)\sum_{\partial(h)=n-\partial(r)}1\\
=&\sum_{\substack{\partial(r)\leq n\\r\in\langle \mathcal{P}_m\rangle}}\mu(r)\left(c_{\cG}q^{n-\partial(r)}+c_0q^{\eta(n-\partial(r))}\right)\\
    =&c_{\cG}q^{n}\sum_{\substack{\partial(r)\leq n\\r\in\langle \mathcal{P}_m\rangle}}\frac{\mu(r)}{\|r\|}+c_0q^{\eta n}\sum_{\substack{\partial(r)\leq n\\r\in\langle \mathcal{P}_m\rangle}}\frac{\mu(r)}{\|r\|^{\eta}},
\end{align*}
where $c_0$ is some constant number. It follows that
\begin{align*}
    \Big |\sum_{\substack{\partial(r)\leq n\\r\in\langle \mathcal{P}_m\rangle}}\frac{\mu(r)}{\|r\|}\Big |\ll&\frac{1}{q^{n}}\sum_{\partial(g)=n}1+q^{(\eta-1)n}\sum_{\partial(r)\leq n}\frac{1}{\|r\|^{\eta}}\\
    =&c_{\cG}+O(q^{(\eta-1)n})+q^{(\eta-1)n}\sum_{\partial(r)\leq n}\frac{1}{\|r\|^{\eta}}.\\
    \ll&q^{(\eta-1)n}\sum_{k=0}^n\sum_{\partial(r)= k}\frac{1}{\|r\|^{\eta}}\\
    =&q^{(\eta-1)n}\sum_{k=0}^n\frac{1}{q^{\eta k}}\left(c_{\cG}q^{k}+O(q^{\eta k})\right)\\
\ll&1.
\end{align*}
Thus, this proves the lemma.
\end{proof}

In addition to the elementary bound for $R(n,m)$ as above, we will also need a refined estimate of $R(n,m)$ when $0\leq m\leq \log \log n$ for our purpose. More precisely, we will show the following lemma.
\begin{lemma}\label{lemma 2 for R(x y)}
For $0\leq m\leq \log\log n$ and some constant $0\leq \eta<1$ only depending on $\cG$, we have that
\begin{equation}\label{equation for est. R(x, y)}
R(n, m)\ll q^{(\eta -1+\varepsilon)n},
\end{equation}
where $\varepsilon$ is an arbitrary positive number.
\end{lemma}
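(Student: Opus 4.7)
The plan is to realize $R(n,m)$ as the truncation of an absolutely convergent series whose value at the critical radius $|T|=1/q$ vanishes. Writing $C(k,m) \colonequals \sum_{\partial(g)=k,\,d_-(g)>m}\mu(g)$ as in Lemma~\ref{Corollary for uniform bound of M(n, m)}, we have $R(n,m)=\sum_{k=0}^n C(k,m)/q^k$. The Euler factorization of $\mu$ gives
$$F_m(T) \colonequals \sum_{k\ge 0} C(k,m)T^k = \sum_{\substack{g\in\cG\\ d_-(g)>m}}\mu(g)T^{\partial(g)} = \frac{1}{Z_\cG(T)}\prod_{\partial(P)\le m}\bigl(1-T^{\partial(P)}\bigr)^{-1}.$$
Under Axiom $A^{\#}$, the main term $c_\cG q^n$ in $G^{\#}(n)$ forces $Z_\cG(T)$ to have a simple pole at $T=1/q$, so $1/Z_\cG(T)$ has a simple zero there. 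Since $q^{\partial(P)}>1$ for every prime $P$, the remaining factors $(1-T^{\partial(P)})^{-1}$ with $\partial(P)\le m$ remain bounded at $T=1/q$. Consequently $F_m(1/q)=0$.

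The bound $|C(k,m)|\ll q^{\eta k}\exp(q^m)$ from Lemma~\ref{Corollary for uniform bound of M(n, m)} guarantees absolute convergence of the defining series at $T=1/q$ (since $\eta<1$), so I can rewrite the partial sum as the negative of its tail:
$$R(n,m) = F_m(1/q) - \sum_{k>n}\frac{C(k,m)}{q^k} = -\sum_{k>n}\frac{C(k,m)}{q^k}\ll \exp(q^m)\sum_{k>n}q^{(\eta-1)k}\ll \exp(q^m)\,q^{(\eta-1)n}.$$

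For the range $0\le m\le\log\log n$ one has $q^m \le q^{\log\log n} = (\log n)^{\log q}$, hence $\exp(q^m) \le \exp\bigl((\log n)^{\log q}\bigr)$. This quantity grows subpolynomially in $n$, so for any fixed $\varepsilon>0$ it is $\ll q^{\varepsilon n}=\exp(\varepsilon n\log q)$ as soon as $n$ is large. Combining the two estimates yields $R(n,m)\ll q^{(\eta-1+\varepsilon)n}$, as required.

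The conceptual crux of this argument is the vanishing $F_m(1/q)=0$: a direct application of Lemma~\ref{Corollary for uniform bound of M(n, m)} to $R(n,m)$ itself would only give the useless bound $R(n,m)\ll\exp(q^m)$, whereas identifying $R(n,m)$ as a tail of a sum that converges to zero converts the $k$-summation into a geometric decay of size $q^{(\eta-1)n}$. Once this is noticed, the remainder is a standard geometric-series tail estimate combined with the elementary fact that $q^{\log\log n}$ grows slower than any positive multiple of $n$.
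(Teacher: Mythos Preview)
Your argument is correct and is in fact tighter than the paper's. Both proofs hinge on the same vanishing fact---that $\sum_{d_-(g)>m}\mu(g)/\|g\|=0$ because $1/Z_{\cG}(T)$ has a zero at $T=1/q$---but the paper splits into the cases $m=0$ and $m\ge 1$, handling the first by an inclusion--exclusion identity and Lemma~\ref{Lemma of improving Cha2017}, and the second by summation by parts applied to $M_y(t)$. Your approach unifies the two cases: once $F_m(1/q)=0$ is known and the series converges absolutely inside $|T|<q^{-\eta}$, the partial sum $R(n,m)$ is immediately the negative of a geometrically decaying tail, and Lemma~\ref{Corollary for uniform bound of M(n, m)} feeds in directly without any integration by parts. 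This is cleaner and avoids the somewhat opaque $\exp(-c\sqrt{t})$ step in the paper's computation.

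One small wording issue: when $q>e$ the quantity $\exp(q^{\log\log n})=\exp\bigl((\log n)^{\log q}\bigr)$ is \emph{not} subpolynomial in $n$ (for instance, if $\log q=2$ it equals $n^{\log n}$). What you actually need, and what is true, is that it is \emph{subexponential}: $(\log n)^{\log q}=o(n)$ for any fixed $q>1$, so $\exp\bigl((\log n)^{\log q}\bigr)\ll q^{\varepsilon n}$ for every $\varepsilon>0$. Replace ``subpolynomially'' by ``subexponentially'' and the argument goes through unchanged.
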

\begin{proof}
We will separate two cases to discuss, i.e., $m=0$ and $m\neq 0$. If $m=0$, then we have
$$R(n, 0)=\sum_{\smat{\partial(g)\le n}}\frac{\mu(g)}{\|g\|}.$$
Assuming that $n\geq 1$, by the principle of inclusion-exclusion and a similar computation as in the proof of Lemma~\ref{lemma 1 for abs. value. of R(x y)}, we can get
\begin{align*}
    0=\sum_{\partial(g)=n}\sum_{r| g}\mu(r)=
    c_{\cG}q^{n}\sum_{\partial(r)\leq n}\frac{\mu(r)}{\|r\|}+c_0q^{\eta n}\sum_{\partial(r)\leq n}\frac{\mu(r)}{\|r\|^{\eta}} 
\end{align*}
for some constant $c_0$. It follows that
\begin{equation}\label{eqn_Rnm_refined_m0}
    \sum_{\partial(r)\leq n}\frac{\mu(r)}{\|r\|}=-\frac{c_0}{c_{\cG}}q^{(\eta -1)n}\sum_{k=0}^n\sum_{\partial(r)=k}\frac{\mu(r)}{\|r\|^{\eta}}.
\end{equation}
Consider $k\geq 1$ and by Lemma~\ref{Lemma of improving Cha2017}, then the inside summation
\begin{equation}\label{eqn_Rnm_refined_m0_with_eta}
        \sum_{\partial(r)=k}\frac{\mu(r)}{\|r\|^{\eta}}=\frac{1}{q^{\eta k}}\sum_{\partial(r)=k}\mu(r)    \ll 1.
\end{equation}
It is clear that \eqref{eqn_Rnm_refined_m0_with_eta} also holds for $k=0$. Thus, we have 
\begin{align*}
    \sum_{k=0}^n\sum_{\partial(r)=k}\frac{\mu(r)}{\|r\|^{\eta}}\ll n.
\end{align*}
Therefore, by \eqref{eqn_Rnm_refined_m0} and \eqref{eqn_Rnm_refined_m0_with_eta} we obtain that
\begin{equation}
        \sum_{\partial(r)\leq n}\frac{\mu(r)}{\|r\|}=O(q^{(\eta -1+\varepsilon)n}),
\end{equation}
where $\varepsilon$ is an arbitrary positive number. This proves the desired \eqref{equation for est. R(x, y)} for the special case $m=0$. 

Now suppose that $1\leq m \leq \log\log n$, by Lemma~\ref{Corollary for uniform bound of M(n, m)} we have that
\begin{equation}\label{Key est for M(n, m)}
    M(n,m)\ll q^{\eta n}.
\end{equation}
It follows from \cite[Proposition~(1.2.1)]{KnopfmacherZhang2001} that $\zeta_{\cG}(s)$ has a simple pole at $s=1$. And so we have that
\begin{equation}\label{Eqn: simple pole gives the desired 0 equation}
    \sum_{\partial(g)\geq 0}\frac{\mu(g)}{\|g\|}=\lim_{s\to 1}\frac{1}{\zeta_{\cG}(s)}=0.
\end{equation}
Let $\mathcal{P}^c_m=\{P\mid \partial(P)\leq m\}$ as in the proof of Lemma~\ref{lemma 1 for abs. value. of R(x y)}. It is clear that $\mathcal{P}^c_m$ is a finite set. It implies that
\begin{equation}\label{Eqn of mu(g) over norm for large smallest prime}
    \sum_{\substack{\partial(g)\geq 0\\d_-(g)>m}}\frac{\mu(g)}{\|g\|}=\left(\sum_{\partial(g)\geq 0}\frac{\mu(g)}{\|g\|}\right)\cdot\left(\prod_{P\in\mathcal{P}^c_m}\left(1-\frac{1}{\|P\|}\right)^{-1}\right)
    =0.
\end{equation}
Finally, we show the desired result by using \eqref{Key est for M(n, m)}, \eqref{Eqn of mu(g) over norm for large smallest prime} and summation by parts. In particular, let $y$ be fixed. To apply summation by parts, we put
\begin{equation}
    M_y(x)=\sum_{\substack{0\leq\partial(g)\leq x\\d_-(g)>y}}\mu(g).
\end{equation}
It follows that
\begin{align*}
R(x, y)=&-\sum_{\smat{\partial(g)>x\\d_{-}(g)>y}}\frac{\mu(g)}{\|g\|}
=-\int_x^\infty\frac{dM_y(t)}{q^{t}}\\
=&\frac{M_y(x)}{q^{x}}-\ln q\int_x^\infty\dfrac{M_y(t)}{q^{t}}\,dt\\
\ll&q^{(\eta-1)x}+\ln q\int_x^\infty\mathrm{exp}(-c\sqrt{t})\,dt\\
\ll&q^{(\eta-1+\varepsilon)x},
\end{align*}
where $\varepsilon$ is an arbitrary positive number. Hence, for $1\leq m\leq \log\log n$ we also obtain $R(n, m)\ll q^{(\eta-1+\varepsilon)n}$ as desired. In conclusion, we complete the proof of the lemma.
\end{proof}
Now we are ready to prove ``$\eqref{Eqn: analogue of main result in global function feild}\Leftrightarrow \eqref{Eqn: to write into partial sum}$". 
\begin{proof}[Proof of ``$\eqref{Eqn: analogue of main result in global function feild}\Leftrightarrow \eqref{Eqn: to write into partial sum}$"]\label{Proof of keylemma2}
Set $f(\infty)=0$ for convenience. First, we break up the partial sum of  \eqref{Eqn: to write into partial sum} into two sums: 
\begin{align*}
\sum_{\smat{0\le \partial(g)\le n\\ g\in \mathfrak{D}(\cG,S)}}\frac{\mu*a(g)}{\|g\|}f(d_{-}(g))
=&\sum_{\smat{0\le \partial(g)\le n\\ g\in \mathfrak{D}(\cG,S)}}\frac{f(d_{-}(g))}{\|g\|}\sum_{r|g}\mu(r)a(g-r)\\
=&\sum_{\smat{0\le \partial(g)\le n\\ g\in \mathfrak{D}(\cG,S)}}\frac{f(d_{-}(g))}{\|g\|}\mu(g)+\sum_{\smat{0\le \partial(g)\le n\\ g\in \mathfrak{D}(\cG,S)}}\frac{f(d_{-}(g))}{\|g\|}\sum_{\substack{r|g\\r\neq g}}\mu(r)a(g-r)\\
\colonequals&S_7+S_8.
\end{align*}
To show ``$\eqref{Eqn: analogue of main result in global function feild}\Leftrightarrow \eqref{Eqn: to write into partial sum}$", it suffices to show $S_8=o(1)$. Let $g=g'+P_{\min}(g)$. Then we have
\begin{align*}
    S_8=&\sum_{\smat{0\le \partial(g)\le n\\ g\in \mathfrak{D}(\cG,S)}}\frac{f(d_{-}(g))}{\|g\|}\sum_{\substack{r|g\\r\neq g}}\mu(r)a(g-r)\\
    =&\sum_{\substack{0\leq \partial(P_{\min}(g))\leq n\\ P_{\min}(g)\in S}}\frac{f(\partial(P_{\min}(g)))}{\|P_{\min}(g)\|}\sum_{\smat{0\le \partial(g^{\prime})\le n-\partial(P_{\min}(g))\\d_-(g^{\prime})>\partial(P_{\min}(g))}}\frac{1}{\|g^{\prime}\|}\sum_{\substack{r|g\\P_{\min}(g)| r \\r\neq g}}\mu(r)a(g^{\prime}+P_{\min}(g)-r)\\
    &+\sum_{\substack{0\leq \partial(P_{\min}(g))\leq n\\ P_{\min}(g)\in S}}\frac{f(\partial(P_{\min}(g)))}{\|P_{\min}(g)\|}\sum_{\smat{0\le \partial(g^{\prime})\le n-\partial(P_{\min}(g))\\d_-(g^{\prime})>\partial(P_{\min}(g))}}\frac{1}{\|g^{\prime}\|}\sum_{\substack{r|g'}}\mu(r)a(g^{\prime}+P_{\min}(g)-r)\\
    \colonequals&S_9+S_{10}.
\end{align*}
Next, we will show that both $S_9$ and $S_{10}$ are error terms of size $o(1)$.
\begin{enumerate}
    \item For $S_9$, let $r=P_{\min}(g)+r^{\prime}$ and $g^{\prime}=h^{\prime}+r^{\prime}$ and then exchange the order of the first and the second summations. Since the function $a$ only supports on $\mathfrak{D}(\cG, S)$, we have that
    \begin{align*}
        S_9=&-\sum_{\substack{0\leq \partial(P_{\min}(g))\leq n\\ P_{\min}(g)\in S}}\frac{f(\partial(P_{\min}(g)))}{\|P_{\min}(g)\|}\sum_{\substack{1\leq \partial(h^\prime)\leq n-\partial(P_{\min}(g))\\d_-(h^\prime)>\partial(P_{\min}(g))\\h^\prime\in\mathfrak{D}(\cG, S)}}\frac{a(h^\prime)}{\|h^\prime\|}\sum_{\substack{1\leq \partial(r^\prime)\leq n-\partial(P_{\min}(g))-\partial(h^\prime)\\d_-(r^\prime)>\partial(P_{\min}(g))}}\frac{\mu(r^\prime)}{\|r^\prime\|}\\
        =&-\sum_{\substack{1\leq \partial(h^\prime)\leq n\\h^\prime\in\mathfrak{D}(\cG, S)}}\frac{a(h^\prime)}{\|h^\prime\|}\sum_{\substack{1\leq \partial(P_{\min}(g))\leq n-\partial(h^\prime)\\\partial(P_{\min}(g))<d_-(h^\prime)\\ P_{\min}(g)\in S}}\frac{f(\partial(P_{\min}(g)))}{\|P_{\min}(g)\|}\sum_{\substack{1\leq \partial(r^\prime)\leq n-\partial(P_{\min}(g))-\partial(h^\prime)\\d_-(r^\prime)>\partial(P_{\min}(g))}}\frac{\mu(r^\prime)}{\|r^\prime\|}\\
          \ll&\sum_{\substack{1\leq \partial(h^\prime)\leq [\log\log n]\\h^\prime\in\mathfrak{D}(\cG, S)}}\frac{|a(h^\prime)|}{\|h^\prime\|}\sum_{\substack{1\leq \partial(P_{\min}(g))\leq n-\partial(h^\prime)\\\partial(P_{\min}(g))<d_-(h^\prime)\\ P_{\min}(g)\in S}}\frac{1}{\|P_{\min}(g)\|}|R(n-\partial(P_{\min}(g))-\partial(h^\prime), \partial(P_{\min}(g)))|\\
         +&\sum_{\substack{[\log\log n]+1\leq \partial(h^\prime)\leq n\\h^\prime\in\mathfrak{D}(\cG, S)}}\frac{|a(h^\prime)|}{\|h^\prime\|}\sum_{\substack{1\leq \partial(P_{\min}(g))\leq n-\partial(h^\prime)\\\partial(P_{\min}(g))<d_-(h^\prime)\\ P_{\min}(g)\in S}}\frac{1}{\|P_{\min}(g)\|}\\
          \ll&q^{-c(n-2\log\log n)}\sum_{\substack{1\leq \partial(h^\prime)\leq [\log\log n]\\h^\prime\in\mathfrak{D}(\cG, S)}}\frac{|a(h^\prime)|}{\|h^\prime\|}\log\log\|h^\prime\|
         +\sum_{\substack{[\log\log n]+1\leq \partial(h^\prime)\leq n\\h^\prime\in\mathfrak{D}(\cG, S)}}\frac{|a(h^\prime)|}{\|h^\prime\|}\log\log \|h^\prime\|\\
         =&O(q^{-c(n-2\log\log n)})+o(1)=o(1).
    \end{align*}
    Here the constant $c=-(\eta-1+\varepsilon)>0$ by Lemma~\ref{lemma 2 for R(x y)} and the last inequality follows from the assumption of $a$ as in Theorem~\ref{Thm: main_thm_convolution_Axiom A_sharp}. 
    \item Similarly for $S_{10}$, let $g^{\prime}=h+r$ and then we have that
\begin{align*}
   S_{10}=&\sum_{\substack{0\leq \partial(P_{\min}(g))\leq n\\ P_{\min}(g)\in S}}\frac{f(\partial(P_{\min}(g)))}{\|P_{\min}(g)\|}\sum_{\substack{1\leq \partial(h)\leq n-\partial(P_{\min}(g))\\d_-(h)>\partial(P_{\min}(g))\\ \text{or}\, h=0}}\frac{a(h+P_{\min}(g))}{\|h\|}\sum_{\substack{1\leq \partial(r)\leq n-\partial(P_{\min}(g))-\partial(h)\\d_-(r)>\partial(P_{\min}(g))}}\frac{\mu(r)}{\|r\|}\\
=&\sum_{0\leq \partial(h)\leq n}\sum_{\substack{0\leq \partial(P_{\min}(g))\leq n-\partial(h)\\\partial(P_{\min}(g))<d_-(h)\\ P_{\min}(g)\in S}}\frac{f(\partial(P_{\min}(g)))a(h+P_{\min}(g))}{\|P_{\min}(g)\|\|h\|}\sum_{\substack{1\leq \partial(r)\leq n-\partial(P_{\min}(g))-\partial(h)\\d_-(r)>\partial(P_{\min}(g))\\}}\frac{\mu(r)}{\|r\|}\\
\ll&\sum_{0\leq \partial(h)\leq [\log\log n]}\sum_{\substack{1\leq \partial(P_{\min}(g))\leq n-\partial(h)\\\partial(P_{\min}(g))<d_-(h)\\ P_{\min}(g)\in S}}\frac{|a(h+P_{\min}(g))|}{\|P_{\min}(g)\|\|h\|}|R(n-\partial(P_{\min}(g))-\partial(h),\partial(P_{\min}(g)))|\\
+&\sum_{\substack{[\log\log n]+2\leq \partial(g)\leq n\\g\in\mathfrak{D}(\cG, S)}}\frac{|a(g)|}{\|g\|}\\
\ll &q^{-c(n-2\log\log n)}\sum_{\substack{1\leq \partial(g)\leq n\\ g\in\mathfrak{D}(\cG, S)}}\frac{|a(g)|}{\|g\|}+\sum_{\substack{[\log\log n]+2\leq \partial(g)\leq n\\g\in\mathfrak{D}(\cG, S)}}\frac{|a(g)|}{\|g\|}\\
=&O(q^{-c(n-2\log\log n)})+o(1)=o(1).
\end{align*}
\end{enumerate}
Again, the constant $c=-(\eta-1+\varepsilon)>0$ by Lemma~\ref{lemma 2 for R(x y)} and the last inequality follows from the assumption of $a$ as in Theorem~\ref{Thm: main_thm_convolution_Axiom A_sharp}. Thus, we conclude that 
\begin{equation}\label{keyequation}
\sum_{\smat{0\le \partial(g)\le n\\ g\in \mathfrak{D}(\cG,S)}}\frac{\mu*a(g)}{\|g\|}f(d_{-}(g))=\sum_{\smat{0\le \partial(g)\leq n\\ g\in \mathfrak{D}(\cG, S)}}\frac{\mu(g)}{\|g\|}f(d_{-}(g))+o(1).
\end{equation}
\end{proof}
\subsection{Proof of Corollary~\ref{Main Corollary_Axiom A_sharp}}\label{Sect: pf_cor}
In this section, we prove Corollary~\ref{Main Corollary_Axiom A_sharp} in details. For the convenience of the readers, we cite it here.  

\begin{corollary}
Suppose that $a\colon \cG\to \C$ is an arithmetic function supported on $\mathfrak{D}(\cG, S)$ with $a(e_{\cG})=1$ and $|a(g)|\ll \|g\|^{-\alpha}$ for some $\alpha>0$. If $S\subseteq\cP$ has a natural density $\delta(S)$, then
\begin{equation}\label{mainthmeqvaphi_cited again}
		-\lim_{n\to\infty}  \sum_{\smat{1\le \partial(g)\le n\\ g\in \mathfrak{D}(\cG,S)}}\frac{\mu*a(g)}{\varphi(g)}=\delta(S).
	\end{equation}	
Here, $\varphi(g)$ is Euler's totient function defined by $\varphi(g)=\|g\|\prod_{P|g}\left(1-\frac{1}{\|P\|}\right).$
\end{corollary}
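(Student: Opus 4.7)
The plan is to deduce \eqref{mainthmeqvaphi_cited again} from Theorem~\ref{Thm: main_thm_convolution_Axiom A_sharp} by pulling $1/\varphi(g)$ back to $1/\|g\|$ at the cost of a controllable error. The key input is the elementary multiplicative identity
\[
\frac{1}{\varphi(g)}\;=\;\frac{1}{\|g\|}\sum_{d\mid g}\frac{\mu^2(d)}{\varphi(d)},
\]
which is verified by checking that both sides are multiplicative in $g$ and collapse on each prime power to $\|P\|/(\|P\|-1)=1+1/(\|P\|-1)$. The pointwise assumption $|a(g)|\ll\|g\|^{-\alpha}$ implies the integrability hypothesis of Theorem~\ref{Thm: main_thm_convolution_Axiom A_sharp}, since Axiom~$A^{\#}$ yields $\sum_{\partial(g)=k}\|g\|^{-1-\alpha}\log\log\|g\|\ll k\,q^{-\alpha k}$, which is summable in $k$. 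Hence Theorem~\ref{Thm: main_thm_convolution_Axiom A_sharp} is directly available.

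Substituting the identity above and isolating the $d=e_{\cG}$ contribution rewrites the left-hand side of \eqref{mainthmeqvaphi_cited again} as
\[
\sum_{\substack{g\in\mathfrak{D}(\cG,S)\\1\le\partial(g)\le n}}\frac{\mu*a(g)}{\|g\|}\;+\;E_n,\qquad E_n:=\sum_{\substack{d\ne e_{\cG}\\d\text{ squarefree}}}\frac{1}{\varphi(d)}\sum_{\substack{g\in\mathfrak{D}(\cG,S),\,d\mid g\\1\le\partial(g)\le n}}\frac{\mu*a(g)}{\|g\|}.
\]
The first sum tends to $-\delta(S)$ by Theorem~\ref{Thm: main_thm_convolution_Axiom A_sharp}, so the task reduces to showing $E_n\to 0$. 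For each fixed $d$, I would mimic the argument of Sect.~\ref{Sect. for Proof of the second Key Theorem}: expand $\mu*a(g)=\sum_{r\mid g}\mu(r)a(g/r)$, write $g=rh$ (forcing $h\in\mathfrak{D}(\cG,S)$ because $a$ is supported there), swap summation orders, and identify the innermost M\"obius sum as a shifted partial sum of the type $R(\cdot,\cdot)$ studied in Lemma~\ref{lemma 2 for R(x y)}. Partitioning $\partial(h)$ at the threshold $\log\log n$ gives a head controlled by the geometric decay of $R$ and a tail controlled by $\sum_{\partial(h)>\log\log n}|a(h)|/\|h\|=o(1)$ thanks to the polynomial decay of $a$.

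The hard part is that $\sum_{d\text{ squarefree}}1/\varphi(d)$ diverges, so a uniform-in-$d$ bound on the inner sum of $E_n$ is insufficient; an explicit decay in $\|d\|$ must be extracted as well. The natural route is to factor $d$ out of the constraint $d\mid g$ via the substitution $g=dg'$, which extracts a factor $\|d\|^{-1}$ and reduces the inner sum to one over a complementary range to which the bounds of Lemmas~\ref{Corollary for uniform bound of M(n, m)} and \ref{lemma 2 for R(x y)} apply. The delicate bookkeeping is to preserve the distinguishable/membership condition $dg'\in\mathfrak{D}(\cG,S)$ during this factorization, since $P_{\min}(dg')$ may sit inside either $d$ or $g'$; a case analysis on the relative minima $d_{-}(d)$ and $d_{-}(g')$, together with the Mertens-type estimates appearing in the proofs of Lemmas~\ref{averagemu} and \ref{Corollary for uniform bound of M(n, m)}, should produce the required $\|d\|$-decay and push $E_n$ to $o(1)$.
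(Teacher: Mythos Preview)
Your approach is genuinely different from the paper's, and the difference matters. The paper does not decompose $1/\varphi(g)$ via the identity $\frac{1}{\varphi(g)}=\frac{1}{\|g\|}\sum_{d\mid g}\mu^2(d)/\varphi(d)$ at all. Instead it defines an auxiliary function
\[
b(g)\;:=\;\sum_{h\mid g}\mu*a(h)\,\frac{\|h\|}{\varphi(h)},
\]
so that by M\"obius inversion $\dfrac{\mu*a(g)}{\varphi(g)}=\dfrac{\mu*b(g)}{\|g\|}$, and then applies Theorem~\ref{Thm: main_thm_convolution_Axiom A_sharp} directly with $b$ in place of $a$. The only work remaining is to verify that $b$ satisfies the integrability hypothesis $\sum_{g}|b(g)|\|g\|^{-1}\log\log\|g\|<\infty$, which the paper does by writing $b$ as a Dirichlet convolution of two explicit multiplicative pieces whose Dirichlet series converge absolutely past a common abscissa $\sigma_0<1$. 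This completely sidesteps the divergent weight $\sum_{d}1/\varphi(d)$ that your decomposition produces.

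Your route is not wrong in spirit, but the hard step you identify is exactly the one you do not carry out: after substituting $g=dg'$ you must control $\sum_{g'}\mu*a(dg')/\|g'\|$ uniformly in $d$ while tracking the constraint $dg'\in\mathfrak{D}(\cG,S)$. Neither $\mu*a(dg')$ nor the distinguishability condition factors cleanly, and the case analysis on $d_-(d)$ versus $d_-(g')$ you allude to would need to produce an explicit bound decaying in $\|d\|$ \emph{uniformly in $n$} (or at least one amenable to dominated convergence). Lemmas~\ref{Corollary for uniform bound of M(n, m)} and~\ref{lemma 2 for R(x y)} as stated concern unrestricted M\"obius sums, not sums twisted by the coprimality-to-$d$ and convolution-with-$a$ structure that appears here, so invoking them ``should produce'' the decay is not yet a proof. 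The paper's $b$-trick avoids all of this bookkeeping in one stroke and is worth learning.
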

\begin{proof}
The proof is similar to the approach in \cite{Wang2020jnt}. In particular, we will apply Theorem~\ref{Thm: main_thm_convolution_Axiom A_sharp} to show \eqref{mainthmeqvaphi_cited again}.
Define
\begin{equation}
	b(g)\colonequals\sum_{h|g}\mu*a(h)\frac{\|h\|}{\varphi(h)}. 
	\end{equation}	
Then by the M\"obius inversion formula, we have	
\begin{equation}
\frac{\mu*a(g)}{\varphi(g)}=\frac{\mu*b(g)}{\|g\|}.
\end{equation}
Clearly, $b(e_{\cG})=1$. By Theorem~\ref{Thm: main_thm_convolution_Axiom A_sharp}, to prove \eqref{mainthmeqvaphi}, it suffices to show that
\begin{equation}\label{eqtoproveforcor12}
\sum_{\smat{0\le \partial(g)\le n\\ g\in \mathfrak{D}(\cG,S)}}\frac{|b(g)|}{\|g\|}\log\log \|g\|<\infty.
\end{equation}
By the definition of  $\mu*a(r)$ and go through a similar argument for elements in $\cG$ as in \cite{Wang2020jnt}, we have 
\begin{equation}
b(g)=\sum_{h|g, (h, g-h)=0}\frac{\|h\|}{\varphi(h)}a(h)\prod_{\smat{P|g-h}}\frac{1}{1-\|P\|}. 
\end{equation}
Furthermore, we have that 
\begin{equation}\label{eqofabsofbA}
|b(g)|\leq \sum_{h|g, (h, g-h)=0}|a(h)|\frac{\|h\|}{\varphi(h)}\prod_{\smat{P|g-h}}\frac{1}{\|P\|-1}. 
\end{equation}
On the other hand, by the standard elementary technique, for every $0<\alpha\leq 1$ one can show that
\begin{equation}
\frac{\|g\|^{(1-\alpha)}}{\varphi(g)}\to 0 \quad\mbox{as $\partial(g)\to\infty$}.
\end{equation}
In particular, we have 
\begin{equation}
\frac{\|g\|}{\varphi(g)}\ll \|g\|^{\frac{\alpha}{2}}.    
\end{equation}
Thus, by \eqref{eqofabsofbA} and the assumption that $|a(g)|\ll \|g\|^{-\alpha}$ with $\alpha>0$, we get the following estimate for $b(g)$:
\begin{equation}\label{eqofabsofbA2}
|b(g)|\leq \sum_{r|g}\|r\|^{-\frac{\alpha}{2}}\prod_{\smat{P|g-r}}\frac{1}{\|P\|-1}.
\end{equation}
Put
\begin{equation}\label{eqofabsofbA2cg}
c(g)=\sum_{r|g}\|r\|^{-\frac{\alpha}{2}}\prod_{\smat{P|g-r}}\frac{1}{\|P\|-1}.
\end{equation}
Then $c(g)$ is the Dirichlet convolution of $c_1(g)=\|g\|^{-\frac{\alpha}{2}}$ and $c_2(g)=\prod_{P|g}\frac{1}{\|P\|-1}$ . It is easy to see that 
\begin{equation}
    \sum_{\partial(g)=0}^\infty\frac{c_1(g)}{\|g\|^s}=\zeta_{\cG}(s+\frac{\alpha}{2}),  
\end{equation}
which is absolutely convergent on $\mathrm{Re}(s)>1-\alpha/2$ by the analytic property of $\zeta_{\cG}(s)$. Moreover, one can show that  
\begin{equation}\label{c2 Dirichlet is abs con}
    \sum_{\partial(g)=0}^\infty\frac{c_2(g)}{\|g\|^s} 
\end{equation} is absolutely convergent on $\sigma=\mathrm{Re}(s)>0$. Observing that $c_2(g)$ is multiplicative, we have
\begin{equation}
   \sum_{\partial(g)\leq N}\frac{|c_2(g)|}{\|g\|^{\sigma}}\leq \prod_{\partial(P)\leq N}\left(1+\sum_{n\ge 1}\frac{|c_2(nP)|}{\|P\|^{n\sigma}}\right).
\end{equation}
On the other hand, we have
\begin{align*}
\sum_{\partial(P) \ge 0}\sum_{n\ge 1}\frac{|c_2(nP)|}{\|P\|^{n\sigma}}=&\sum_{P}\frac{1}{(\|P\|-1)(\|P\|^\sigma-1)}\\
=&\sum_m\sum_{\partial(P)=m}\frac{1}{(q^{m}-1)(q^{m\sigma}-1)}\\
\ll &\sum_m\frac{1}{q^{m\sigma}}<\infty
\end{align*}
for $\sigma=\mathrm{Re}(s)>0$.
By the well known theorem of infinity product we conclude that \eqref{c2 Dirichlet is abs con} is absolutely convergent for $\mathrm{Re}(s)>0$.
It follows that on $\mathrm{Re}(s)>\sigma_0$, where $\sigma_0=\max\{1-\alpha/2, 0\}<1$, we have
\begin{equation}
    \sum_{\partial(g)=0}^\infty\frac{c(g)}{\|g\|^s}=\zeta_{\cG}(s+\frac{\alpha}{2})\sum_{\partial(g)=0}^\infty\frac{c_2(g)}{\|g\|^s}.  
\end{equation}
Therefore, the derivative of $\sum_{\partial(g)=0}^\infty c(g)\|g\|^{-s}$ is convergent at $s=1$, which implies that
\begin{equation}
\sum_{\partial(g)=0}^\infty \frac{c(g)}{\|g\|}\log \|g\|<\infty.    
\end{equation}
It follows immediately from \eqref{eqofabsofbA2} and \eqref{eqofabsofbA2cg} that
\begin{equation}\label{eqofabsofsumofbA}
    \sum_{\partial(g)=0}^\infty \frac{|b(g)|}{\|g\|}\log \|g\|<\infty.
\end{equation}
Hence, by \eqref{eqofabsofsumofbA} we obtain \eqref{eqtoproveforcor12} and so the desired result \eqref{mainthmeqvaphi} follows.

\end{proof}

\section{Proofs of main results for Axiom \texorpdfstring{$A$}{A} type arithmetical semigroups}\label{Sect: Free abe. semigp Axiom A}
In this section, we discuss the situation of Axiom $A$ type arithmetical semigroups. Similarly as in Sect.~\ref{Sect: Proofs for semigps Axiom A_sharp}, there is also a key result, Theorem~\ref{keylemma12combined_Axiom A}, and then Theorem~\ref{Thm: main_thm_convolution_Axiom A} follows as an immediate consequence. 
Throughout this section, we fix $\cG$ to be an Axiom $A$ type arithmetical semigroup, and adopt the definitions and notations in Sect.~\ref{Sect: Axiom_A}.

For any element $g\in \cG$, we take $\rN^+(g)\colonequals\max\set{ \norm{P}: P|g}$ to be the largest norm of all prime factors of $g$, and $\rN^+(e_{\cG})=1$. We also define the number $Q_S(g)$ in the same way for Axiom $A^{\#}$ type arithmetical semigroups. More precisely, we let
\begin{equation}\label{Defn of QSg for Axiom A}
    Q_S(g)\colonequals\#\set{P\in S: \norm{P}=\rN^+(g), P|g}
\end{equation}
be the number of prime factors of $g$ in $S$ attaining the maximal norm $\rN^+(g)$. Since most of proofs of our results in this section are very similar with those for the case of Axiom $A^{\#}$ type arithmetical semigroup, we will just describe them briefly and only focus on key steps. First, we have the following equidistribution result of the largest (in terms of norm $\|\cdot\|$) prime factors of elements in $\cG$. 
\begin{lemma}\label{equidistribution_AxiomA} 
    If $S\subseteq \cP$ has a natural density $\delta(S)$, then
    \begin{equation}
        \sum_{2\leq \norm{g}\leq x}Q_S(g)\sim c_{\cG}\,\delta(S)x.
    \end{equation}
\end{lemma}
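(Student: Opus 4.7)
The plan is to mirror the proof of Lemma~\ref{equidistribution_QSg_Axiom A_sharp}, replacing the shell-sum formulation native to Axiom~$A^{\#}$ with a partial-sum formulation native to Axiom~$A$. Throughout I will use the abstract prime number theorem for Axiom~$A$ semigroups, namely $\pi_\cG(x)\sim x/\log x$ and $\pi_{\cG,S}(x)\sim \delta(S)\,x/\log x$.

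First, I would convert the $g$-sum into a prime-sum by swapping the order of summation. For any $g\neq e_\cG$ with a prime divisor $P$ satisfying $\|P\|=\rN^+(g)$, the decomposition $g=P\cdot h$ is a bijection between such pairs $(g,P)$ and pairs $(P,h)$ with $h\in\cG$, $\|h\|\le x/\|P\|$, and every prime divisor of $h$ of norm $\le\|P\|$. Letting $\Psi_\cG(y,z)\colonequals\#\{h\in\cG\colon\|h\|\le y,\ \|P'\|\le z \text{ for every prime } P'\mid h\}$ be the corresponding $z$-smooth counting function,
\begin{equation*}
\sum_{2\le\|g\|\le x}Q_S(g)=\sum_{\smat{P\in S\\ \|P\|\le x}}\Psi_\cG\bigl(x/\|P\|,\|P\|\bigr).
\end{equation*}

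Second, I would establish the weighted equidistribution
\begin{equation*}
\sum_{\smat{P\in S\\ \|P\|\le x}}\Psi_\cG\bigl(x/\|P\|,\|P\|\bigr)=\delta(S)\sum_{\|P\|\le x}\Psi_\cG\bigl(x/\|P\|,\|P\|\bigr)+o(x).
\end{equation*}
The idea is to partition the range $\|P\|\le x$ dyadically (with a slightly finer partition near $\|P\|\approx\sqrt{x}$ where $\Psi_\cG$ transitions between the ``trivially smooth'' and ``genuinely smooth'' regimes), observe that inside each block the weight $\Psi_\cG(x/\|P\|,\|P\|)$ varies by at most a bounded factor, and then apply the natural density hypothesis on $S$ block-by-block before telescoping the errors.

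Third, I would evaluate the right-hand sum. Using the bijection above without the restriction $P\in S$ gives $\sum_{\|P\|\le x}\Psi_\cG(x/\|P\|,\|P\|)=\sum_{\|g\|\le x}Q(g)$, where $Q(g)\colonequals\#\{P\mid g\colon\|P\|=\rN^+(g)\}$. Since primes of equal norm in an Axiom~$A$ semigroup are sparse (by the abstract PNT), all but $o(x)$ many $g$ with $\|g\|\le x$ satisfy $Q(g)=1$, and so by Axiom~$A$
\begin{equation*}
\sum_{\|g\|\le x}Q(g)=\mathcal{N}(x)-1+o(x)=c_\cG\,x+o(x),
\end{equation*}
yielding the claim.

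The hard part will be step two: the weight $\Psi_\cG(x/\|P\|,\|P\|)$ varies dramatically across $\|P\|\in(1,x]$ (in fact by a Dickman-type factor), and the dyadic-equidistribution argument must be carried out with only the weak $o(1)$ error coming from the assumed natural density of $S$. An alternative route, closely following \cite{DuanWangYi2020}, uses an explicit Dickman-type asymptotic for $\Psi_\cG(y,z)$ available for Axiom~$A$ semigroups from Knopfmacher's framework, and directly matches main terms.
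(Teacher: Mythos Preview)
Your decomposition in step~1 and the overall plan match the paper's approach, which simply defers to \cite[Theorem~3.1]{KuralMcDonaldSah2020}; that reference (and its $A^{\#}$ precursor \cite[Theorem~4.4]{DuanWangYi2020}) proceeds through the smooth-element count $\Psi_\cG$ exactly as you set things up.

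There is, however, a genuine gap in step~3. The assertion that ``primes of equal norm in an Axiom~$A$ semigroup are sparse (by the abstract PNT)'' is not a valid inference: the relation $\pi_\cG(x)\sim x/\log x$ constrains only the cumulative count of primes, not the multiplicity $r(t)=\#\{P:\|P\|=t\}$ at individual norm values. Over $\Z$ one has $r(t)\le 1$ and over a number field $r(t)\le[K:\Q]$, but a general Axiom~$A$ semigroup carries no such a priori bound. Consequently the claim ``all but $o(x)$ many $g$ satisfy $Q(g)=1$'' is unavailable, and in fact it is equivalent (up to the easy bound $Q(g)\le\log_2\|g\|$) to the $S=\cP$ case of the very lemma you are proving, so invoking it here is circular.

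The fix is exactly your ``alternative route'', which should be promoted to the main argument rather than left as a fallback. The Dickman-type asymptotic $\Psi_\cG(y,z)\sim c_\cG\,y\,\rho(\log y/\log z)$, available for Axiom~$A$ semigroups in Knopfmacher's framework, lets you evaluate $\sum_{P\in S}\Psi_\cG(x/\|P\|,\|P\|)$ directly: partial summation against $\pi_{\cG,S}(t)\sim\delta(S)\,t/\log t$ and the change of variable $u=\log x/\log t-1$ reduce the main term to $c_\cG\,\delta(S)\,x\int_0^\infty\rho(u)(u+1)^{-1}\,du=c_\cG\,\delta(S)\,x$. This is how the references the paper cites actually proceed, and it dispatches your steps~2 and~3 in one stroke without circularity. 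Note also that your dyadic step~2 is not truly independent of this: the claim that $\Psi_\cG(x/\|P\|,\|P\|)$ varies by a bounded factor across a dyadic block in $\|P\|$ already leans on the same Dickman input.
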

\begin{proof}
The proof is similar as that for \cite[Theorem~3.1]{KuralMcDonaldSah2020}.
\end{proof}
Moreover, with an analogous argument as that for \cite[Lemma~2.1]{SweetingWoo2019} we have the following duality lemma for Axiom $A$ type arithmetical semigroups.
\begin{lemma}\label{duality_Axiom A}
	Suppose that $f:\N\to\C$ is an arithmetic function with $f(1)=0$. Then for any $g\in \cG$ we have
	\begin{equation}
    \sum_{h|g}\mu(h)1_{\mathfrak{D}(\cG, S)}(h)f(\rN_-(h))=-Q_S(g)f(\rN^+(g)).
\end{equation}
	Here $1_{\mathfrak{D}(\cG,S)}$ is the indicator function on $\mathfrak{D}(\cG,S)$ defined as in \eqref{Definition of distingushable_Axiom A}.
\end{lemma}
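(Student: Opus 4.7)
The plan is to follow Alladi's original duality idea adapted to an arithmetical semigroup: parameterize each nonzero contribution on the left-hand side by its distinguished minimum prime factor and then collapse the remaining sum via the standard M\"obius identity. First I would discard all non-squarefree $h$ (for which $\mu(h)=0$), and note that the term $h = e_{\cG}$ contributes $f(1)=0$ under the natural convention $\rN_-(e_{\cG})=1$ (consistent with $d_-(e_{\cG})=0$ in the Axiom $A^{\#}$ analogue). For a nontrivial squarefree $h\in \mathfrak{D}(\cG,S)$ with $h\mid g$, write $h = Ph'$ where $P = P_{\min}(h)\in S$; then $P\mid g$, and $h'$ is a squarefree divisor of $g$ all of whose prime factors have norm strictly greater than $\|P\|$ (with the convention $\rN_-(e_{\cG})=\infty$ covering $h'=e_{\cG}$). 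Since $\mu(h) = -\mu(h')$, the left-hand side of the lemma becomes
\begin{equation*}
-\sum_{\substack{P\in\cP\cap S\\ P\mid g}} f(\|P\|)\sum_{\substack{h'\mid g,\ h' \text{ sqfree}\\ \rN_-(h')>\|P\|}}\mu(h').
\end{equation*}

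Next I would identify the inner sum with $\sum_{h'\mid R_P^+}\mu(h')$, where $R_P^+$ denotes the product of all distinct prime divisors of $g$ of norm $>\|P\|$. The classical M\"obius identity gives this sum the value $1$ if $R_P^+ = e_{\cG}$ and $0$ otherwise. Since $R_P^+ = e_{\cG}$ is equivalent to $\|P\| = \rN^+(g)$, only primes $P\mid g$ lying in $S$ that attain the maximal norm survive. Summing the contributions yields
\begin{equation*}
-\sum_{\substack{P\in\cP\cap S\\ P\mid g,\ \|P\|=\rN^+(g)}} f(\rN^+(g)) \;=\; -Q_S(g)\, f(\rN^+(g)),
\end{equation*}
which is the claimed identity.

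This argument is essentially combinatorial once the parameterization by $P_{\min}(h)$ is in place, so I do not anticipate a substantive obstacle; the only care required is in the boundary bookkeeping. Specifically one must verify that the conventions $\rN_-(e_{\cG})=\rN^+(e_{\cG})=1$, combined with $f(1)=0$ and $Q_S(e_{\cG})=0$, make both sides vanish when $g=e_{\cG}$, and that the map $h\mapsto (P_{\min}(h),\, h/P_{\min}(h))$ is a bijection from nontrivial squarefree elements of $\mathfrak{D}(\cG,S)$ dividing $g$ onto pairs $(P,h')$ with $P\in\cP\cap S$, $P\mid g$, and $h'$ a squarefree divisor of $g$ with $\rN_-(h')>\|P\|$ --- this is immediate from the definition of distinguishable.
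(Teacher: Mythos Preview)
Your argument is correct and is precisely the standard Alladi-type duality computation that the paper invokes by citing the analogous argument in \cite[Lemma~2.1]{SweetingWoo2019} (and, for the $A^{\#}$ analogue, \cite[Lemma~3.1]{DuanWangYi2020}). The parameterization $h\mapsto (P_{\min}(h),\,h/P_{\min}(h))$ and the collapse of the inner sum via $\sum_{h'\mid R_P^+}\mu(h')=[R_P^+=e_{\cG}]$ are exactly the steps those references carry out, so there is no meaningful difference in approach.
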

The following two lemmas is giving Axiom $A$ type arithmetical semigroups analogues of bounds for partial sums involving the M\"{o}bius function that we will need in our proofs.
\begin{lemma}\label{Lemma improves SW19 Lemma 2.2}
For some positive constant $c$ we have
\begin{align}
    \sum_{\norm{g}\leq x}\mu(g)=&O\big(x\,\mathrm{exp}\{-c(\log x)^{1/2}\}\big),\label{sum of mu leq T}\\
        \sum_{\norm{g}\leq x}\frac{\mu(g)}{\norm{g}}=&O\big(\mathrm{exp}\{-c(\log x)^{1/2}\}\big).\label{sum of mu/Norm leq T}
\end{align}
\end{lemma}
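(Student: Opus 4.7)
The plan is to adapt the classical proof of Landau's bound $\sum_{n\leq x}\mu(n)=O(x\exp(-c\sqrt{\log x}))$ to the abstract Axiom $A$ setting via contour integration against the reciprocal zeta function. The starting point is that $1/\zeta_\cG(s)=\sum_{g\in\cG}\mu(g)\|g\|^{-s}$ converges absolutely for $\Re(s)>1$, while Axiom $A$ gives $\zeta_\cG$ a meromorphic continuation past $\Re(s)=1$ with a unique simple pole at $s=1$ of residue $c_\cG$. The key analytic input I would invoke is a classical-shape zero-free region $\sigma>1-c_0/\log(|t|+2)$ for $\zeta_\cG$, which is precisely the hypothesis that yields the effective abstract prime number theorem with error $x\exp(-c\sqrt{\log x})$ recorded in Table~\ref{table: examples} (cf.\ \cite[Chap.~6]{Knopfmacher1975}).

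With this input I would run the truncated Perron formula against $1/\zeta_\cG(s)$. Taking $\sigma_0=1+1/\log x$ and a truncation parameter $T$, one gets
$$\sum_{\|g\|\leq x}\mu(g)=\frac{1}{2\pi i}\int_{\sigma_0-iT}^{\sigma_0+iT}\frac{x^s}{s\,\zeta_\cG(s)}\,ds+O\!\left(\frac{x\log x}{T}\right),$$
where the remainder bound uses the growth of $\mathcal{N}(x)=c_\cG x+O(x^\eta)$ coming from Axiom $A$. Shifting the contour to $\sigma=1-c_0/(2\log T)$ picks up no residue, since $1/\zeta_\cG$ is holomorphic throughout the zero-free region. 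Standard estimates on the three new segments, using a polynomial bound for $1/\zeta_\cG$ on the boundary of that region, give an integral of size $x^{1-c_0/(2\log T)}\log T$. Choosing $T=\exp(c_1\sqrt{\log x})$ balances this against the Perron error and yields \eqref{sum of mu leq T}.

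For \eqref{sum of mu/Norm leq T} I would use Abel summation. Writing $M(x)=\sum_{\|g\|\leq x}\mu(g)$ and using that $\|g\|$ takes only discrete values, Abel's summation formula gives
$$\sum_{\|g\|\leq x}\frac{\mu(g)}{\|g\|}=\frac{M(x)}{x}+\int_{1}^{x}\frac{M(t)}{t^{2}}\,dt.$$
Splitting the integral at $t=\sqrt{x}$ and substituting the bound from \eqref{sum of mu leq T} on each piece yields $O(\exp(-c'\sqrt{\log x}))$, where the slight loss in the exponential constant is absorbed into $c'<c$. (Alternatively, one could apply a second Perron argument with the kernel $x^s/(s\zeta_\cG(s))$ shifted through $s=1$, exploiting that $1/\zeta_\cG(1)=0$.)

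The principal obstacle is securing the standard-strength zero-free region $\sigma>1-c_0/\log(|t|+2)$ for a general abstract Axiom $A$ zeta, since Axiom $A$ alone guarantees only the pole and continuation, not automatically such a refined region. In the concrete applications at hand (Dedekind zeta of $\mathcal{I}_K$) this is Landau's classical theorem, while for other Axiom $A$ semigroups it is available under the mild additional analytic hypothesis implicit in the effective PNTs listed in Table~\ref{table: examples}. Once the zero-free region is granted, the rest of the argument is the routine contour shift described above.
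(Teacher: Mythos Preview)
Your approach is essentially the same as the paper's: it too invokes a classical-shape zero-free region for $\zeta_{\cG}$ and then runs the standard Perron-type contour argument (the paper cites \cite{FujisawaMinamide2018} for the latter and \cite{SweetingWoo2019} as the model). The one point where you hedge---whether Axiom~$A$ alone yields the needed zero-free region---is resolved in the paper by a direct citation of Diamond--Montgomery--Vorhauer \cite{DiamondMontgomeryVorhauer2006}, which establishes precisely such a region for Beurling-type zeta functions under Axiom~$A$; so your ``principal obstacle'' is in fact a known theorem, not an additional hypothesis.
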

\begin{proof}
This directly generalizes \cite[Lemma~2.2]{SweetingWoo2019}, which indicates that the proof relies on the zero-free region for the zeta function $\zeta_{\cG}(s)$. In particular, the analogous zero-free region exists for $\zeta_{\cG}(s)$; see \cite[Theorem~1]{DiamondMontgomeryVorhauer2006}. Thus we can use the classical Perron-type argument as in \cite[Theorem~2.1]{FujisawaMinamide2018} to obtain the desired results.
\end{proof}
\begin{lemma}\label{Key Lemma for Number fields} 
For any $x, y\geq 1$, define
\begin{equation}
    M(x, y)\colonequals\sum_{\substack{\norm{g}\leq x\\\rN_-(g)>y}}\mu(g).
\end{equation}
There is a positive constant $c$, which depends only on $\cG$, such that

\begin{equation}\label{M(T, Y)_I}
    M(x, y)\ll x\,\mathrm{exp}(-c(\log x)^{1/2})\prod_{\norm{P}\leq y}(1-\norm{P}^{-1/2})^{-1},
\end{equation}
uniformly for $x, y\geq 1$. In particular, we have
\begin{equation}\label{M(T, Y)_II}
    M(x, y)\ll x\,\mathrm{exp}(-c(\log x)^{1/2}+y).
\end{equation}
\end{lemma}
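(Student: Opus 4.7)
The plan is to prove (i) by a Perron-type contour integration using the classical zero-free region of the abstract zeta function $\zeta_{\cG}(s)$, following the strategy of Sweeting-Woo \cite[Lemma~2.3]{SweetingWoo2019}, and then to deduce (ii) by an elementary bound on the associated Mertens-type product. The key analytic input is the classical zero-free region for $\zeta_{\cG}$ provided by \cite{DiamondMontgomeryVorhauer2006}, the same machinery that powers Lemma~\ref{Lemma improves SW19 Lemma 2.2}.

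The starting point is the Dirichlet series identity
$$F(s,y) \;:=\; \sum_{\substack{g\in\cG\\ \rN_-(g)>y}} \frac{\mu(g)}{\|g\|^s} \;=\; \prod_{\|P\|>y}\!\bigl(1-\|P\|^{-s}\bigr) \;=\; \frac{1}{\zeta_{\cG}(s)} \prod_{\|P\|\le y}\!\bigl(1-\|P\|^{-s}\bigr)^{-1},$$
valid for $\Re(s)>1$ by multiplicativity of $\mu$. A truncated Perron formula with $b=1+1/\log x$ and height $T$ then yields
$$M(x,y) \;=\; \frac{1}{2\pi i}\int_{b-iT}^{b+iT} F(s,y)\,\frac{x^s}{s}\,ds \;+\; O\!\Bigl(\frac{x\log x}{T}\Bigr),$$
where the truncation error uses $|\mu|\le 1$ together with $\mathcal{N}(x)\ll x$ from Axiom $A$.

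Next I shift the contour to the vertical line $\sigma_1 := 1-c_0/(\log x)^{1/2}$, which lies inside the classical zero-free region of $\zeta_\cG$; in particular $F(s,y)$ is holomorphic in the intermediate strip, the simple pole of $\zeta_\cG$ at $s=1$ being cancelled by a simple zero of $F$, so no residues are collected. On the shifted line, the zero-free region gives $|\zeta_\cG(\sigma_1+it)^{-1}|\ll(\log(|t|+2))^A$ for some constant $A$, while for the local product I use the universal pointwise bound
$$\bigl|1-\|P\|^{-(\sigma_1+it)}\bigr|^{-1} \;\le\; \bigl(1-\|P\|^{-\sigma_1}\bigr)^{-1} \;\le\; \bigl(1-\|P\|^{-1/2}\bigr)^{-1},$$
which holds uniformly in $t$ as soon as $\sigma_1\ge 1/2$. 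This is the step in which the exponent $1/2$ in the final bound is manufactured. Routine control of the horizontal segments plus the choice $T=x$ then produces
$$M(x,y)\;\ll\; x^{\sigma_1}(\log x)^{A+1}\prod_{\|P\|\le y}\!\bigl(1-\|P\|^{-1/2}\bigr)^{-1} \;=\; x\exp\!\bigl(-c_0(\log x)^{1/2}\bigr)(\log x)^{A+1}\prod_{\|P\|\le y}\!\bigl(1-\|P\|^{-1/2}\bigr)^{-1},$$
and shrinking $c_0$ slightly to some $c>0$ absorbs the polylogarithmic factor, giving (i) for $x\ge x_0$; the case $x<x_0$ is trivial via $|M(x,y)|\le\mathcal{N}(x)\ll x$.

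For (ii), the elementary inequality $-\log(1-t)\le 2t$ valid for $0\le t\le 1/2$ gives
$$\log\prod_{\|P\|\le y}\!\bigl(1-\|P\|^{-1/2}\bigr)^{-1} \;\le\; O(1) + 2\!\!\sum_{4\le\|P\|\le y}\!\!\|P\|^{-1/2} \;\le\; O(1) + 2\,\mathcal{N}(y) \;\ll\; y$$
by Axiom $A$, so the product is $\ll\exp(y)$ and (ii) is an immediate consequence of (i). The main technical obstacle in executing this plan is verifying that the quantitative shape of the classical zero-free region — specifically, a bound of the form $\sigma>1-c/\log(|t|+2)$ — propagates through the contour shift so as to yield precisely the exponent $1/2$ in $\exp(-c(\log x)^{1/2})$; however, this exponent is already encoded in the analytic input supplied by \cite{DiamondMontgomeryVorhauer2006} and invoked in Lemma~\ref{Lemma improves SW19 Lemma 2.2}, so the remaining computation is entirely standard.
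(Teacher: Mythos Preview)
Your approach is exactly the one the paper has in mind: the paper's own proof is a single sentence deferring to the zero-free region behind Lemma~\ref{Lemma improves SW19 Lemma 2.2} together with the argument of \cite[Lemma~4.2]{KuralMcDonaldSah2020}, and your Perron--contour-shift outline is precisely that argument spelled out.

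There is, however, a genuine slip in your parameter choices. The classical zero-free region has the shape $\sigma>1-c/\log(|t|+2)$, so at height $|t|\le T$ it only extends left to $\sigma>1-c/\log T$. Your vertical line $\sigma_1=1-c_0/(\log x)^{1/2}$ therefore lies inside the zero-free region only when $\log T\ll(\log x)^{1/2}$, i.e.\ when $T\asymp\exp\bigl(c'(\log x)^{1/2}\bigr)$. With your stated choice $T=x$ the segment $\Re s=\sigma_1$, $|t|\le T$ exits the zero-free region for large $x$, and the bound $|\zeta_\cG(\sigma_1+it)^{-1}|\ll(\log|t|)^A$ is lost. Taking instead $T=\exp\bigl(c'(\log x)^{1/2}\bigr)$, the truncation error becomes
\[
\frac{x\log x}{T}\;=\;x\,\exp\!\bigl(-c'(\log x)^{1/2}+\log\log x\bigr)\;\ll\;x\,\exp\!\bigl(-c''(\log x)^{1/2}\bigr),
\]
which balances the contribution $x^{\sigma_1}(\log x)^{A+1}$ from the shifted integral, and the rest of your argument (including the bound on the local product via $\sigma_1\ge 1/2$ and your derivation of \eqref{M(T, Y)_II} from \eqref{M(T, Y)_I}) goes through unchanged.
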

\begin{proof}
By using the same zero-free region as in Lemma~\ref{Lemma improves SW19 Lemma 2.2} and the similar argument as in \cite[Lemma~4.2]{KuralMcDonaldSah2020}, the desired result follows. 
\end{proof}
To prove Theorem~\ref{keylemma12combined_Axiom A}, we will also need a more general version of Axer's theorem.
\begin{theorem}[Axer \cite{Axer1910}]\label{Axer Theorem}
If $f$ is a complex-valued function on $\cG$ satisfying
$$
\sum_{\norm{g}\leq x}|f(g)|=O(x),\qquad \sum_{\norm{g}\leq x}f(g)=o(x),
$$
then
$$
\sum_{\norm{g}\leq x}\left(c_{\cG}\frac{x}{\norm{g}}-\mathcal{N}\left(\frac{x}{\norm{g}}\right)\right)f(g)=o(x).
$$
\end{theorem}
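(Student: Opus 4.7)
The plan is to adapt Axer's classical partial-summation proof to the arithmetical-semigroup setting, with Axiom $A$ supplying the pointwise control $\rho(t) \colonequals c_{\cG} t - \mathcal{N}(t) = O(t^{\eta})$ for some $0 < \eta < 1$. Writing $F(t) \colonequals \sum_{\norm{g} \le t} f(g)$ and $B(t) \colonequals \sum_{\norm{g} \le t}|f(g)|$, the two hypotheses read $F(t) = o(t)$ and $B(t) = O(t)$, and the target becomes
\[
\Sigma(x) \colonequals \sum_{\norm{g} \le x} f(g)\,\rho(x/\norm{g}) = o(x).
\]

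Fix $\varepsilon > 0$ and choose $N_0 = N_0(\varepsilon)$ so that $|F(t)| \le \varepsilon t$ for $t \ge N_0$. I split $\Sigma(x) = S_1 + S_2$ at $y = rx$ for a small constant $r \in (0,1)$ to be chosen after $\varepsilon$. For the short range $\norm{g} \le y$, combine the pointwise bound $|\rho(x/\norm{g})| \le C(x/\norm{g})^{\eta}$ with Abel summation against $B(t) = O(t)$ to obtain
\[
|S_1| \;\ll\; x^{\eta}\sum_{\norm{g}\le y}\frac{|f(g)|}{\norm{g}^{\eta}} \;\ll\; x^{\eta}y^{1-\eta} \;=\; x\cdot r^{1-\eta}.
\]
For the long range $y < \norm{g} \le x$, apply Riemann--Stieltjes integration by parts against $F$ on $(y,x]$, using that $\phi(t) \colonequals \rho(x/t) = c_{\cG} x/t - \mathcal{N}(x/t)$ has differential $-c_{\cG} x/t^{2}\,dt$ plus jumps (with the natural multiplicities) at each $t = x/\norm{h}$ for $h \in \cG$ with $1 \le \norm{h} \le x/y$. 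This gives
\[
\int_{y}^{x} F(t)\,d\phi(t) \;=\; -c_{\cG} x\int_{y}^{x}\frac{F(t)}{t^{2}}\,dt \;+\; \sum_{1 \le \norm{h} \le x/y} F(x/\norm{h}).
\]
Throughout the integral one has $t \ge y \ge N_0$ and in the sum $x/\norm{h} \ge y \ge N_0$, so the bound $|F| \le \varepsilon(\cdot)$ applies term by term; combined with the Mertens-type estimate $\sum_{\norm{h} \le T}\norm{h}^{-1} = c_{\cG}\log T + O(1)$ (obtained by Abel summation from Axiom $A$), the long range together with its boundary terms contributes $|S_2| \ll \varepsilon x\bigl(1 + \log(1/r)\bigr)$.

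Combining the two ranges yields $|\Sigma(x)|/x \ll r^{1-\eta} + \varepsilon\bigl(1+\log(1/r)\bigr)$. Given any $\delta > 0$, first take $\varepsilon$ small enough that $\varepsilon < \delta$, then $r$ small (depending on $\varepsilon$) so that $r^{1-\eta} < \delta$ and $\varepsilon\log(1/r) < \delta$; taking $x$ large enough for $y = rx \ge N_0$ then gives $\limsup_{x \to \infty}|\Sigma(x)|/x \le 2\delta$, forcing $\Sigma(x) = o(x)$. The main technical obstacle, and the reason a naive splitting fails, is that one needs an $o(x)$ bound rather than the weaker $O(x\log x)$ produced by pure partial summation: the essential trick is to split at a \emph{fixed ratio} $y/x = r$, so that $\log(x/y)$ is bounded independently of $x$, and then absorb the resulting short-range loss $x^{\eta}y^{1-\eta} = x \cdot r^{1-\eta}$ via the genuine power-saving error in Axiom $A$ — without that saving, one could not justify picking $y$ as a constant fraction of $x$.
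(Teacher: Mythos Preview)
The paper does not prove this statement: Theorem~\ref{Axer Theorem} is quoted as a classical result with a citation to Axer's 1910 paper and is used as a black box in the proof of Theorem~\ref{keylemma12combined_Axiom A}. So there is no ``paper's own proof'' to compare against.

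Your argument is the standard Axer proof, correctly transported to the Axiom~$A$ setting, and it is essentially correct. Two minor points are worth tightening. First, the order in which you choose parameters at the end is garbled: you write ``first take $\varepsilon$ small\ldots then $r$ small (depending on $\varepsilon$) so that $r^{1-\eta}<\delta$ and $\varepsilon\log(1/r)<\delta$'', but making $r$ small works for the first inequality and \emph{against} the second. The clean version is to choose $r$ first so that $r^{1-\eta}<\delta$, and only then (with $\log(1/r)$ now a fixed constant) choose $\varepsilon$ so that $\varepsilon(1+\log(1/r))<\delta$; finally take $x$ large enough that $y=rx\ge N_0(\varepsilon)$. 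Second, when you integrate by parts in $S_2$ you should record the boundary term $\phi(y)F(y)$ explicitly; since $|\phi(y)|=|\rho(1/r)|\ll r^{-\eta}$ and $|F(y)|\le\varepsilon r x$, this contributes $\ll\varepsilon r^{1-\eta}x\le\varepsilon x$, so it is harmless, but it is not literally contained in the two displayed pieces of $\int F\,d\phi$ you wrote down. With those cosmetic fixes the proof is complete.
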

Next, with a similar process as in the proof for \cite[Lemma~4.2]{KuralMcDonaldSah2020} by using Lemma~\ref{Key Lemma for Number fields}, we can get the following intermediate bound, which is needed to verify the conditions of Axer's theorem when we apply it to prove Theorem~\ref{keylemma12combined_Axiom A}.
\begin{lemma}\label{averagemu_axiomA}
	For any bounded function $f$,	we have
	\begin{equation}\label{averagemueq_axiomA}
		\sum_{\norm{g}\leq x}\mu(g)1_{\mathfrak{D}(\cG,S)}(g)f(\rN_{-}(g))=O_{f,\cG}\of{\frac{x}{\log\log x}}.
	\end{equation}
\end{lemma}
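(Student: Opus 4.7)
The plan is to parallel the proof of Lemma~\ref{averagemu} from the Axiom $A^{\#}$ case, using the Axiom $A$ ingredient Lemma~\ref{Key Lemma for Number fields} in place of Lemma~\ref{Corollary for uniform bound of M(n, m)}. First I would decompose the sum by the smallest prime factor of $g$:
\begin{equation*}
\sum_{\|g\|\le x}\mu(g)1_{\mathfrak{D}(\cG,S)}(g)f(\rN_-(g))=\sum_{\substack{P\in S\\ \|P\|\le x}}f(\|P\|)\sum_{\substack{\|g\|\le x\\ P_{\min}(g)=P}}\mu(g).
\end{equation*}
For a distinguishable $g$ with $P_{\min}(g)=P$, writing $g=Pg'$ yields $\rN_-(g')>\|P\|$ (so automatically $P\nmid g'$) and $\mu(g)=-\mu(g')$, so the inner sum equals $-M(x/\|P\|,\|P\|)$ in the notation of Lemma~\ref{Key Lemma for Number fields}. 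I would then split the outer sum at a threshold $y$ (to be chosen as $y=\log x$):
\begin{equation*}
\sum_{\|g\|\le x}\mu(g)1_{\mathfrak{D}(\cG,S)}(g)f(\rN_-(g))=:S_1+S_2,
\end{equation*}
where $S_1$ collects the terms with $\|P\|\le y$ and $S_2$ the terms with $y<\|P\|\le x$.

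For $S_1$, I would invoke the refined estimate \eqref{M(T, Y)_I} together with the abstract PNT bound $\sum_{\|P'\|\le y}\|P'\|^{-1/2}\ll \sqrt{y}/\log y$ to obtain
\begin{equation*}
|M(x/\|P\|,\|P\|)|\ll (x/\|P\|)\exp\!\bigl(-c\sqrt{\log(x/\|P\|)}+O(\sqrt{y}/\log y)\bigr).
\end{equation*}
For $y=\log x$ and $\|P\|\le y$, one has $\sqrt{\log(x/\|P\|)}\sim\sqrt{\log x}$ while $\sqrt{y}/\log y=\sqrt{\log x}/\log\log x=o(\sqrt{\log x})$, so the exponential savings dominate. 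Summing on $P$ and applying abstract Mertens $\sum_{\|P\|\le y}\|P\|^{-1}\ll \log\log y$ yields
\begin{equation*}
|S_1|\ll \|f\|_\infty\, x\exp(-c'\sqrt{\log x})\log\log\log x=o(x/\log\log x).
\end{equation*}
For $S_2$, I use the crude majorization $|S_2|\le \|f\|_\infty\,\Phi(x,y)$, where $\Phi(x,y):=\#\{g\in\cG:\|g\|\le x,\,\rN_-(g)>y\}$. Writing $\Phi(x,y)=\sum_{d}\mu(d)\,\mathcal{N}(x/\|d\|)$ over squarefree $d$ with all prime factors of norm $\le y$, and using Axiom $A$ together with the abstract Mertens asymptotic $\prod_{\|P\|\le y}(1-\|P\|^{-1})\asymp 1/\log y$, one obtains
\begin{equation*}
\Phi(x,y)\ll \frac{x}{\log y}+x^{\eta}\cdot 2^{O(\pi(y))}.
\end{equation*}

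Choosing $y=\log x$ gives $\log y\asymp \log\log x$, so the main term in $\Phi(x,y)$ is $O(x/\log\log x)$, while the sieve error $x^\eta\cdot 2^{O(\log x/\log\log x)}$ is subpolynomial in $x$ (hence negligible for $\eta<1$). Combining the estimates for $S_1$ and $S_2$ then yields the desired bound $O(x/\log\log x)$. The main technical obstacle is balancing the two estimates with a single threshold $y$: the simpler bound \eqref{M(T, Y)_II} breaks down as $\|P\|$ approaches $\log x$, which forces the use of the sharper bound \eqref{M(T, Y)_I} along with an accurate Mertens-type control of the product $\prod_{\|P'\|\le \|P\|}(1-\|P'\|^{-1/2})^{-1}$; once this is in place, the choice $y=\log x$ threads the two constraints cleanly.
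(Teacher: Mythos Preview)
Your proposal is correct and follows essentially the same strategy the paper indicates (parallel to Lemma~\ref{averagemu}, invoking Lemma~\ref{Key Lemma for Number fields}): decompose by the smallest prime factor, split at a threshold $y$, control the small-prime piece $S_1$ with the $M$-bound and the large-prime piece $S_2$ with a sieve estimate for $\Phi(x,y)$. The one variation worth noting is your threshold $y=\log x$: as you correctly observe, this is too large for the cruder bound \eqref{M(T, Y)_II} (the term $e^{y}$ there swamps $e^{-c\sqrt{\log x}}$), so you must invoke the sharper product form \eqref{M(T, Y)_I} together with $\sum_{\|P\|\le y}\|P\|^{-1/2}\ll \sqrt{y}/\log y$. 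The more direct transcription of the $A^{\#}$ argument would instead take a smaller threshold, say $y=(\log x)^{\alpha}$ with $0<\alpha<1/2$, so that \eqref{M(T, Y)_II} already suffices for $S_1$ while one still has $\log y\asymp\log\log x$ for the main term of $\Phi$. Both routes are valid; yours trades a slightly more delicate $S_1$ estimate for a cleaner threshold.
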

Now, we are ready to prove the following intermediate theorem.
\begin{theorem}\label{keylemma12combined_Axiom A}
    Let $f$ be any bounded arithmetic function with $f(1)=0$.     Let $a$ be an arithmetic function as in Theorem~\ref{Thm: main_thm_convolution_Axiom A}. The followings are equivalent:
    \begin{equation}\label{Eqn: equidis of largest prime factors_Axiom A}
        \sum_{\norm{g}\leq x}Q_S(g)f(\rN^+(g))\sim c_{\cG}\delta(S)x.
    \end{equation}
    \begin{equation}\label{Eqn: analogue of main result in global function feild_Axiom A}
        -\lim_{x\to\infty}\sum_{\substack{2\leq \norm{g}\leq x\\g\in\mathfrak{D}(\cG, S)}}\frac{\mu(g)f(\rN_-(g))}{\norm{g}}=\delta(S).
    \end{equation}
    \begin{equation}\label{Eqn: to write into partial sum_Axiom A}
        -\lim_{x\to\infty}\sum_{\substack{2\leq \norm{g}\leq x\\g\in\mathfrak{D}(\cG, S)}}\frac{\mu*a(g)f(\rN_-(g))}{\norm{g}}=\delta(S).
    \end{equation}    
\end{theorem}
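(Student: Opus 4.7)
The plan is to mirror the proof of Theorem~\ref{keylemma12combined} from the $A^\#$ setting, replacing the discrete degree $\partial$ with the continuous norm $\|\cdot\|$ and invoking the Axiom $A$ estimate $\mathcal{N}(x)=c_{\cG}x+O(x^\eta)$ in place of $G^\#(n)=c_{\cG}q^n+O(q^{\eta n})$. I will establish the cycle via the two implications ``\eqref{Eqn: equidis of largest prime factors_Axiom A} $\Leftrightarrow$ \eqref{Eqn: analogue of main result in global function feild_Axiom A}'' and ``\eqref{Eqn: analogue of main result in global function feild_Axiom A} $\Leftrightarrow$ \eqref{Eqn: to write into partial sum_Axiom A}'', exactly paralleling Sect.~\ref{Sect: Proofs for semigps Axiom A_sharp}.

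\textbf{The first equivalence.} Starting from the LHS of \eqref{Eqn: equidis of largest prime factors_Axiom A}, Lemma~\ref{duality_Axiom A} combined with a swap in the order of summation gives
\[
-\sum_{\|g\|\leq x} Q_S(g)\, f(\rN^+(g)) \;=\; \sum_{\|h\|\leq x} \mu(h)\,1_{\mathfrak{D}(\cG,S)}(h)\, f(\rN_-(h))\,\mathcal{N}(x/\|h\|).
\]
Writing $\mathcal{N}(x/\|h\|) = c_{\cG} x/\|h\| + \bigl(\mathcal{N}(x/\|h\|) - c_{\cG} x/\|h\|\bigr)$ produces a main term proportional to the partial sum in \eqref{Eqn: analogue of main result in global function feild_Axiom A} and an error term. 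To show the error contributes $o(x)$, I would apply Axer's theorem (Theorem~\ref{Axer Theorem}) to $F(h) := \mu(h)\,1_{\mathfrak{D}(\cG,S)}(h)\, f(\rN_-(h))$: boundedness of $f$ and Axiom $A$ give $\sum_{\|h\|\leq x}|F(h)| = O(x)$, while Lemma~\ref{averagemu_axiomA} supplies $\sum_{\|h\|\leq x} F(h) = O(x/\log\log x) = o(x)$. The equivalence then follows by dividing by $c_{\cG} x$ and letting $x \to \infty$.

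\textbf{The second equivalence.} Define
\[
R(x,y) := \sum_{\substack{\|g\|\leq x \\ \rN_-(g) > y}} \frac{\mu(g)}{\|g\|}.
\]
I would first prove $R(x,y) = O(1)$ uniformly by a sieve-of-Eratosthenes argument parallel to Lemma~\ref{lemma 1 for abs. value. of R(x y)}. Then, using that $\sum_{g\in\cG} \mu(g)/\|g\| = 0$ (because $\zeta_{\cG}$ has a simple pole at $s=1$, so $1/\zeta_{\cG}(1)=0$) together with \eqref{M(T, Y)_II} and partial summation, I would derive the refined estimate $R(x,y) \ll \exp(-c\sqrt{\log x})$ uniformly for $1 \leq y \leq \log\log x$. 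Following Sect.~\ref{Sect. for Proof of the second Key Theorem}, expand $\mu*a(g) = \sum_{h|g} \mu(h)\, a(g/h)$, separate off the $h=g$ term (which reproduces the sum in \eqref{Eqn: analogue of main result in global function feild_Axiom A}), and decompose the remaining cross terms by factoring off $P_{\min}(g)$ as $g = h' P_{\min}(g)$, with the inner divisor sum over the complementary factor captured by $R$. The support hypothesis $a \subset \mathfrak{D}(\cG,S)$ and the summability hypothesis $\sum |a(g)|\log\log \|g\|/\|g\| < \infty$ then combine with the refined bound on $R$ to force the cross terms to be $o(1)$.

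\textbf{Main obstacle.} The most delicate point is that the zero-free region for $\zeta_{\cG}$ only yields a Vinogradov-type saving $\exp(-c\sqrt{\log x})$, which is much weaker than the geometric $q^{(\eta-1+\varepsilon)n}$ saving in the $A^\#$ case. Consequently, splitting the cross-term sum into the ranges $\|h'\| \leq \exp(\log\log x)$ and $\|h'\| > \exp(\log\log x)$ requires careful bookkeeping: the refined bound on $R$ just barely beats the $\log\log$-weighted tail of $a$. Beyond this analytic subtlety, the combinatorial reorganization analogous to the $S_9$, $S_{10}$ analysis of Sect.~\ref{Sect. for Proof of the second Key Theorem} carries over verbatim once ``degree'' is translated to ``logarithmic norm''.
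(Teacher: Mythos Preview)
Your proposal is correct and follows the paper's approach exactly: duality (Lemma~\ref{duality_Axiom A}) plus Axer's theorem (Theorem~\ref{Axer Theorem}), with the hypotheses supplied by Lemma~\ref{averagemu_axiomA}, for the first equivalence; and the two-tier estimate on $R(x,y)$---the elementary $O(1)$ bound and the refined $\exp(-c\sqrt{\log x})$ bound for $y\le\log\log x$ obtained from Lemmas~\ref{Lemma improves SW19 Lemma 2.2} and~\ref{Key Lemma for Number fields} via partial summation---fed into the $S_9,S_{10}$ decomposition for the second. One small correction in your obstacle paragraph: the split on $\|h'\|$ should be at roughly $\log\log x$, not $\exp(\log\log x)=\log x$, since the refined bound on $R(x',y)$ requires $y=\|P_{\min}(g)\|\le\log\log x'$ and you only control $\|P_{\min}(g)\|<\rN_-(h')\le\|h'\|$; with the corrected threshold the bookkeeping goes through as you indicate.
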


\begin{proof}``$\eqref{Eqn: equidis of largest prime factors_Axiom A}\Leftrightarrow \eqref{Eqn: analogue of main result in global function feild_Axiom A}$": It is easy to see that the hypotheses of Theorem~\ref{Axer Theorem} are satisfied since the estimate \eqref{Eqn: Axiom_A} of $\mathcal{N}(x)$, Lemma~\ref{averagemu_axiomA} and the assumption that $f$ is bounded.
Then the desired result follows from Lemma~\ref{duality_Axiom A} and Theorem~\ref{Axer Theorem} by applying the analogous approach in \cite[Lemma~4.3]{KuralMcDonaldSah2020}. In fact, we have
\begin{align*}
  c_{\cG}\delta(S)x\sim \sum_{\norm{g}\leq x}Q_S(g)f(\rN^+(g))=&-\sum_{\norm{g}\leq x}\sum_{h|g}\mu(h)1_{\mathfrak{D}(\cG, S)}(h)f(\rN_-(h)) \\ 
  =&-\sum_{\norm{h}\leq x}\mu(h)1_{\mathfrak{D}(\cG, S)}(h)f(\rN_-(h))\sum_{\norm{g}\leq x/\norm{h}}1\\
  =&-c_{\cG}x\sum_{\substack{2\leq \norm{h}\leq x\\h\in\mathfrak{D}(\cG, S)}}\frac{\mu(h)f(\rN_-(h))}{\norm{h}}+o(x).
\end{align*}
``$\eqref{Eqn: analogue of main result in global function feild_Axiom A}\Leftrightarrow \eqref{Eqn: to write into partial sum_Axiom A}$": For any $x, y\geq 1$, we define
\begin{equation}
    R(x, y)\colonequals\sum_{\substack{1\leq\norm{g}\leq x\\\rN_-(g)>y}}\frac{\mu(g)}{\norm{g}}.
\end{equation}
By the analogous arguments as in the proof of Lemma~\ref{lemma 1 for abs. value. of R(x y)} we can obtain the same elementary bound for $R(x, y)$. That is, for any $x, y\geq 1$ we have
\begin{equation}\label{eqn_elementary bound for Rxy_axiom A}
    R(x, y)=O(1).
\end{equation}
Furthermore, for $1\leq y\leq \log\log x$, we can obtain the following refined estimate
\begin{equation}\label{eqn_refined bound for Rxy_axiom A}
    R(x, y)\ll \mathrm{exp}(-c(\log x)^{1/2}),
\end{equation}
where the positive constant $c$ depends only on $\cG$. In fact, for $y=1$, the estimate \eqref{eqn_refined bound for Rxy_axiom A} is just \eqref{sum of mu/Norm leq T} in Lemma~\ref{Lemma improves SW19 Lemma 2.2}. As for $1< y\leq \log\log x$, the estimate \eqref{eqn_refined bound for Rxy_axiom A} follows by using Lemma~\ref{Key Lemma for Number fields} and summation by parts. Finally, after going through a similar process as in the proof for ``$\eqref{Eqn: analogue of main result in global function feild}\Leftrightarrow \eqref{Eqn: to write into partial sum}$" (Sect.~\ref{Sect. for Proof of the second Key Theorem}) by using the estimates \eqref{eqn_elementary bound for Rxy_axiom A} and \eqref{eqn_refined bound for Rxy_axiom A}, the desired result follows.
\end{proof}

As a result, Theorem~\ref{Thm: main_thm_convolution_Axiom A} follows immediately from Lemma~\ref{equidistribution_AxiomA} and Theorem~\ref{keylemma12combined_Axiom A} by taking $f(n)=1$ for all $n\in \Z_{\geq 2}$. As for the proof of Corollary~\ref{Main Corollary_Axiom A}, it is similar to that of Corollary~\ref{Main Corollary_Axiom A_sharp}.

\section{Applications of main theorems}\label{Sect: application_details}
This section is devoted to give some necessary background in order to make sense of our applications stated in Sect.~\ref{Sect: Application_short}. Moreover, we also include some results those are not stated in Sect.~\ref{Sect: Application_short} due to the limit of space.

\subsection{Number field and its integral ideal semigroup}\label{Sect: applicatioin_num_field}
For references of the followings, one can refer to \cite[Chap.~\RNum{1}]{Neukirch-alg-NT}. Recall that a number field $K$ is defined to be a finite extension of the rational field $\Q$ and the ring of algebraic integers $\mathcal{O}_K$ of $K$ is a Dedekind domain. Thus every ideal $\mathfrak{a}$ in $\mathcal{O}_K$ admits a uniquely factorization into prime ideals up to order, i.e., we have 
$$
\mathfrak{a}=\mathfrak{p}_1^{a_1} \mathfrak{p}_2^{a_2}\cdots \mathfrak{p}_m^{a_m}
$$
with each $\mathfrak{p}_i$ a prime ideal and each $a_i> 0$. And we let $\mathcal{I}_K$ be the semigroup of integral ideals. Since every Dedekind domain has Krull dimension one, each nonzero prime ideal $\mathfrak{p}$ is also maximal. Thus the quotient $\mathcal{O}_K/\mathfrak{p}$ is an integral zero-dimensional ring, i.e., a field. In fact, $\mathcal{O}_K/\mathfrak{p}$ is a finite field since it can be considered as a finite extension of the finite field $\Z/(\Z\cap \mathfrak{p})$. Hence the \emph{norm} $\|\mathfrak{p}\|\colonequals[\mathcal{O}_k:\mathfrak{p}]=\#(\mathcal{O}_K/\mathfrak{p})$ is well-defined. An effective form of Landau's prime ideal theorem for algebraic number fields (see \cite{LagariasOdlyzko1977}) implies that
$$
\pi_K(x)\colonequals\#\{\mathfrak{p}: \|\mathfrak{p}\|\leq x\}=\frac{x}{\log x} + O_K(x\,\mathrm{exp}(-c_K\sqrt{\log x}))
$$
for some constant $c_K>0$ depending only on $K$. In particular, the set $\mathcal{P}$ of all prime ideals is countable. Moreover, it is well known that
$$
\mathcal{N}(x)\colonequals\#\{\mathfrak{a}: \|\mathfrak{a}\|\leq x\}=c_{K}x+O(x^{\eta})
$$
with $\eta=1-1/[K:\Q]$ (see \cite{MurtyvanOrder2007}). According to all of above facts, one can see that all the conditions of Axiom $A$ type arithmetical semigroup are satisfied. In conclusion, an immediate application of Theorem~\ref{Thm: main_thm_convolution_Axiom A} deduces Corollary~\ref{Cor: Alladi_number_field}, which is stated again here for the convenience of readers. 

\begin{corollary}
Let $S\subseteq \cP$ be a subset of prime ideals with natural density $\delta(S)$. For any arithmetic function $a: \mathcal{I}_K\to \C$ supported on $\mathfrak{D}(\mathcal{I}_K, S)$ with $a(\mathcal{O}_K)=1$, and 
$$\lim_{x\to\infty}\sum_{\smat{2\le \norm{\mathfrak{a}}\le x\\ \mathfrak{a}\in\mathfrak{D}(\cI_K,S)}}\frac{|a(\mathfrak{a})|}{\norm{\mathfrak{a}}}\log\log \norm{\mathfrak{a}}<\infty.$$
Then
	\begin{equation*}
		-\lim_{x\to\infty}  \sum_{\smat{2\le \norm{\mathfrak{a}}\le x\\ \mathfrak{a}\in \mathfrak{D}(\cI_K,S)}}\frac{\mu*a(\mathfrak{a})}{\norm{\mathfrak{a}}}=\delta(S).
	\end{equation*}	
In particular, we have
\begin{equation*}
	-\lim_{x\to\infty}  \sum_{\smat{2\le \norm{\mathfrak{a}}\le x\\ \mathfrak{a}\in \mathfrak{D}(\cI_K,S)}}\frac{\mu(\mathfrak{a})}{\norm{\mathfrak{a}}}=\delta(S).
	\end{equation*}
\end{corollary}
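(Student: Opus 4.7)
The plan is to deduce the corollary as a direct application of Theorem~\ref{Thm: main_thm_convolution_Axiom A}. To do so, I first need to verify that the pair $(\mathcal{I}_K, \|\cdot\|)$ is an arithmetical semigroup of Axiom $A$ type in the sense of Definition~\ref{Def: Axiom_A}. The semigroup structure itself is standard: because $\mathcal{O}_K$ is a Dedekind domain, every nonzero integral ideal $\mathfrak{a}$ factors uniquely as a product of prime ideals, so $\mathcal{I}_K$ is freely generated (multiplicatively) by the set $\cP$ of nonzero prime ideals, with identity $\mathcal{O}_K$. The absolute norm $\|\mathfrak{a}\| = [\mathcal{O}_K : \mathfrak{a}] = \#(\mathcal{O}_K/\mathfrak{a})$ satisfies $\|\mathcal{O}_K\|=1$, $\|\mathfrak{p}\|>1$ for $\mathfrak{p} \in \cP$, and is completely multiplicative by the Chinese Remainder Theorem; properties (1) and (2) of the norm mapping are therefore immediate.

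Next I would invoke the classical Weber–Landau ideal counting estimate (see for instance \cite{MurtyvanOrder2007}), which gives
\[
\mathcal{N}(x) = \#\{\mathfrak{a} \in \mathcal{I}_K : \|\mathfrak{a}\| \leq x\} = c_K x + O(x^{1-1/[K:\mathbb{Q}]}).
\]
In particular $\mathcal{N}(x)$ is finite for every $x>0$, so property (3) of the norm mapping holds, and the asymptotic is exactly \eqref{Eqn: Axiom_A} with $c_{\cG} = c_K$ and $\eta = 1 - 1/[K:\mathbb{Q}] \in (0,1)$ (unless $K=\Q$, in which case one may take any $\eta \in (0,1)$). Hence $\mathcal{I}_K$ satisfies Axiom $A$. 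The natural density hypothesis on $S \subseteq \cP$ and the convergence hypothesis on $a$ are already built into the statement of the corollary and match verbatim the hypotheses of Theorem~\ref{Thm: main_thm_convolution_Axiom A}.

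With these verifications in hand, the conclusion of Theorem~\ref{Thm: main_thm_convolution_Axiom A} applied to $\cG = \mathcal{I}_K$ yields \eqref{Eqn: Main result for number fields K as a cor of Axiom A} directly, and specializing to the identity arithmetic function of the convolution ring (that is, $a(\mathcal{O}_K)=1$ and $a(\mathfrak{a})=0$ for $\mathfrak{a}\neq \mathcal{O}_K$, in which case $\mu*a = \mu$ and the tail condition is trivial) yields \eqref{Recover KMS 2020}. There is essentially no analytic obstacle here, since the hard work has been done in Theorem~\ref{Thm: main_thm_convolution_Axiom A}; the only thing to be mildly careful about is matching the normalizations, in particular that the norm $\|\cdot\|$ in the statement of the corollary is the absolute ideal norm rather than any fractional power (so no rescaling as in the footnote to Definition~\ref{Def: Axiom_A} is needed), and that the distinguished set $\mathfrak{D}(\cI_K,S)$ and Möbius function $\mu$ of the corollary are exactly the specializations of the abstract definitions in Sect.~\ref{Sect: Axiom_A} to $\mathcal{I}_K$.
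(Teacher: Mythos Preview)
Your proposal is correct and matches the paper's approach essentially verbatim: the paper verifies that $\mathcal{I}_K$ with the absolute ideal norm is an Axiom $A$ type arithmetical semigroup (via unique factorization in the Dedekind domain $\mathcal{O}_K$ and the Weber--Landau estimate $\mathcal{N}(x)=c_K x + O(x^{1-1/[K:\Q]})$) and then invokes Theorem~\ref{Thm: main_thm_convolution_Axiom A} directly. Your additional remarks on the normalization of the norm and the specialization to the identity function are appropriate and consistent with the paper's treatment.
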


\subsection{Algebraic varieties and their \texorpdfstring{$0$}{0}-cycle semigroups}\label{Sect: application_0_cycles}
Fix $p$ to be a prime number and let $q=p^r$ be a prime power. Let $\F_q$ be the corresponding finite field. Let $X$ be a $d$-dimensional projective smooth variety defined over $\F_q$. A $0$-cycle $A$ of $X$ over $\F_q$ is a $0$-dimensional subvariety of $X$ which is also defined over $\F_q$. By the Zariski topology assigned to $X$, this means that the base change $A_{\overline{\F}_q}$ of $A$ to the algebraic closure of $\F_q$ is a finite set of geometric points which is stable under the action of Galois group $\Gal(\overline{\F}_q/\F_q)$. 

For any geometric point $\widetilde{P}\in X(\overline{\F}_q)$, the induced \emph{effective prime $0$-cycle} (or \emph{prime cycle} for simplicity) $P=P(\widetilde{P})$ is defined to be the finite sum of all the distinct Galois conjugations of $\widetilde{P}$ over the ground field $\F_q$, i.e., if $\widetilde{P}$ is defined over $\F_{q^n}$ but not over any its proper subfield, then 
$$P\colonequals\sum\limits_{\sigma\in \Gal(\F_{q^n}/\F_q)} {\widetilde{P}}^{\sigma}.$$ 
It follows immediately that $P(\widetilde{P})=P(\widetilde{P}^{\sigma})$ for any $\sigma\in \Gal(\overline{\F}_q/\F_q)$. In this subsection, we denote by $\mathcal{P}$ the set of prime cycles (of $X$). And we define $\mathcal{A}_X$ to be the semigroup of $\F_q$-effective $0$-cycles of $X$, i.e., it is the semigroup consisting of all the finite sum of prime formal sum of the form 
\begin{equation*}
    A=\sum_{i=1}^k a_{i} P_i=a_1P_1+a_2P_2+\cdots +a_kP_k, \quad \text{with }P_i\in \mathcal{P}, a_i\in \Z_{\geq 0}\text{ and }k\in \Z_{\geq 1}.
\end{equation*}
In particular, $0$ is contained in $\mathcal{A}_X$. For each prime cycle $P$, we define its \emph{degree} $\partial(P)$ to be the minimal positive integer $m$ such that $P$ splits over $\F_{q^{m}}$, i.e., $m$ is the minimal integer such that there exists $\widetilde{P}$ as above such that $P=P(\widetilde{P})$. Note that this definition is independent of the choice of $\widetilde{P}$. Then we denote by $\|P\|\colonequals (q^d)^{\partial(P)}$ and call it the \emph{norm} of $P$\footnote{Our definition of norm is different from the classical version $\|P\|=q^{\partial(P)}$, which is more familiar by algebraic geometer. The reason we take this change is such normalization will make it easier to apply our main results. If one would like to stick with the classical norm, then one needs to replace $\|A\|$ by $\|A\|^d$ in the statement of Corollary~\ref{Cor: Alladi_varieties} to get the correct result.}. For any effective $0$-cycle $A$, its degree is defined to be the summation of its prime cycles and its norm is the product of the corresponding norms.

Recall that the Hasse-Weil zeta function of $X$ is defined to be 
\begin{equation}\label{Eqn: Z_X_1}
    Z_X(T)\colonequals\exp\left(\sum_{n=1}^{\infty}\#X(\F_{q^n})\frac{T^n}{n}\right)=\prod_{n=1}^{\infty}(1-T^n)^{-\pi^{\#}_{X}(n)},
\end{equation}
where $\pi^{\#}(n)\colonequals\{A\in\cA_X: \partial(A)=n\}$ is the total number of $0$-cycles of degree $n$. By the Weil conjecture \cite{Weil-conjecture}, which has been settled by a series of works of Dwork \cite{Dowrk-weil-conj}, Grothendieck \cite{Grothendieck-weil-conj} and Deligne \cite{Deligne-weil-conj} we know that 
\begin{equation}\label{Eqn: Z_X_2}
    Z_X(T)=\frac{\prod\limits_{1\leq i\leq 2d-1, i \text{ odd}}F_i(T)}{(1-T)(1-q^d T)\prod\limits_{2\leq i\leq 2d-2, i \text{ even}}F_i(T)},
\end{equation}
where each $F_i(T)\in \Z[T]$ is a polynomial of degree $B_i$ equaling the $i$th Betti number of $X$. Moreover, for each $i$ the corresponding $F_i(T)=\prod_{j=1}^{B_i}(1-\alpha_{ij}T)$, where $\alpha_{ij}$ are algebraic integers of absolute value $q^{i/2}$. In particular, $Z_X(T)$ does not have a zero at $T=-q^{-1}$. 

Taking logarithm of both sides of \eqref{Eqn: Z_X_2}, using \eqref{Eqn: Z_X_1} to express $\#X(\F_{q^n})$ in terms of $\pi^{\#}(m)$ with $m| n$ and applying the M\"obius inversion formula, one can deduce that 
$$
\pi^{\#}_X(n)=\frac{(q^d)^{n}}{n}+O\left(\frac{q^{\frac{(2d-1)n}{2}}}{n}\right).
$$ 
It follows that the set $\cP$ of prime $0$-cycles is countable. Then  
$$
G^{\#}(n)\colonequals\#\{A\in\cA_X: \partial(A)=n\}=c_{X}(q^d)^{n}+O\left(q^{\frac{2d-1}{2}n}\right)
$$
follows immediately as argued in \cite{IM91}. Here the constant $c_X>0$ only depends on $X$. Thus, by all above facts, it follows that $\mathcal{A}_{X}$ an arithmetical semigroup satisfying Axiom $A^{\#}$. In particular, by Theorem~\ref{Thm: main_thm_convolution_Axiom A_sharp} we obtain Corollary~\ref{Cor: Alladi_varieties} as desired which is copied here for the convenience of readers. 

\begin{corollary}
Suppose that $S\subseteq \cP$ is a subset of prime $0$-cycles with natural density $\delta(S)$. For any arithmetic function $a: \cA_{X}\to \C$ supported on $\mathfrak{D}(\cA_X, S)$ with $a(0)=1$, and 
$$\lim_{n\to\infty}\sum_{\smat{1\le \partial(A)\le n\\ A\in \mathfrak{D}(\cA_X,S)}}\frac{|a(A)|}{\|A\|}\log\log \|A\|<\infty.$$
Then
	\begin{equation}
		-\lim_{n\to\infty}  \sum_{\smat{1\le \partial(A)\le n\\ A\in \mathfrak{D}(\cA_X,S)}}\frac{\mu*a(A)}{\|A\|}=\delta(S). 
	\end{equation}	
In particular, we have
\begin{equation}
		-\lim_{n\to\infty}  \sum_{\smat{1\le \partial(A)\le n\\ A\in \mathfrak{D}(\cA_{X},S)}}\frac{\mu(A)}{\|A\|}=\delta(S).
	\end{equation}
\end{corollary}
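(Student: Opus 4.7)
The plan is to verify that $(\cA_X,\partial)$ is an Axiom~$A^{\#}$ type arithmetical semigroup with base $q^d$ satisfying $Z_{\cA_X}(-q^{-d})\neq 0$, after which Corollary~\ref{Cor: Alladi_varieties} is an immediate instance of Theorem~\ref{Thm: main_thm_convolution_Axiom A_sharp}.

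First I would make the semigroup structure explicit. A prime $0$-cycle $P\in\cP$ is by definition a single Galois orbit in $X(\overline{\F}_q)$, and an effective $0$-cycle is a finite $\Z_{\geq 0}$-linear combination of such orbits, so $\cA_X$ is freely generated as an additive commutative semigroup by $\cP$ with identity $0$ and unique factorization. The degree $\partial(P)$, equal to the size of the orbit, is a positive integer, extends additively to $\cA_X$, and vanishes only on $0$, so conditions $(1^{\#})$ and $(2^{\#})$ hold. Setting $\|A\|\colonequals (q^d)^{\partial(A)}$ as in the statement fixes the base in Definition~\ref{Def: Axiom_A_sharp} to be $q^d$.

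Next I would establish the counting asymptotic and the nonvanishing condition via the Hasse-Weil zeta function. Expanding the Euler product shows that $Z_X(T)$ agrees with the arithmetical zeta function $Z_{\cA_X}(T)=\prod_{P\in\cP}(1-T^{\partial(P)})^{-1}$, and by the Weil conjectures it has the rational form \eqref{Eqn: Z_X_2}, with a simple pole at $T=q^{-d}$ as its unique pole of minimal modulus and with all other poles and zeros on circles $|T|=q^{-i/2}$ for integers $0<i<2d$. A standard contour integration extracting the $n$-th Taylor coefficient of $Z_X(T)$, parallel to the calculation in the proof of Lemma~\ref{Lemma of improving Cha2017}, then yields
\begin{equation*}
    G^{\#}(n)=c_X(q^d)^n+O\!\bigl(q^{(d-1/2)n}\bigr),
\end{equation*}
which is \eqref{Eqn: Axiom_A_sharp} with base $q^d$ and exponent $\eta=1-1/(2d)<1$. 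For the nonvanishing, the zeros of $Z_X(T)$ lie on $|T|=q^{-i/2}$ for odd $i\in\{1,3,\ldots,2d-1\}$; since $d\in\Z$ while $i/2\notin\Z$ for odd $i$, the point $T=-q^{-d}$ is neither a zero nor a pole of $Z_X$, so $Z_{\cA_X}(-q^{-d})\neq 0$.

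With these verifications in hand, every hypothesis of Theorem~\ref{Thm: main_thm_convolution_Axiom A_sharp} is met for $\cA_X$, so that theorem applied to the arithmetic function $a$ in the statement gives the displayed identity involving $\mu\ast a$, and specializing $a$ to the convolution identity yields the displayed identity for $\mu$ alone; together these are exactly Corollary~\ref{Cor: Alladi_varieties}. The only step requiring genuine work is the residue computation producing the asymptotic for $G^{\#}(n)$ with the correct error exponent; aside from that bookkeeping, the proof is a direct unwinding of definitions supplemented by the Weil conjectures.
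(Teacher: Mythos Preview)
Your proposal is correct and follows essentially the same approach as the paper: both verify that $\cA_X$ is an Axiom~$A^{\#}$ semigroup with base $q^d$ via the Weil conjectures, check the nonvanishing condition on $Z_{\cA_X}$, and then invoke Theorem~\ref{Thm: main_thm_convolution_Axiom A_sharp}. The only cosmetic difference is that you extract $G^{\#}(n)$ directly by contour integration on $Z_X(T)$, whereas the paper first derives $\pi^{\#}_X(n)$ by M\"obius inversion and then cites \cite{IM91} for the passage to $G^{\#}(n)$.
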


Moreover, we give two more concrete examples of Corollary~\ref{Cor: Alladi_varieties} and Corollary~\ref{Main Corollary_Axiom A_sharp} as follows. 
\begin{example}
Let $X=\mathbb{P}^1_{\F_q}$ be the projective line over $\F_q$. For a polynomial $F$ in the affine coordinate ring $\F_q[x]$, we take the \emph{effective divisor of zeros} of $F$ to be
$$(F)_0\colonequals\sum_{{\rm{ord}}_P(F)\geq 0} {\rm{ord}}_P(F)P.$$
Using the fact that $\F_q(x)$ has class number one, the map $\phi: F\mapsto (F)_0$ gives a bijection of the following sets. 
\begin{align*}
    \set{\text{monic polynomials $F\in \F_q[x]$}} & \Longleftrightarrow \set{\text{effective divisors $D$ not supported by $(\infty)$}}\\
    \set{\text{irreducible monic polynomials $F_P\in \F_q[x]$}} & \Longleftrightarrow \set{\text{prime divisors $P\neq (\infty)$}}
\end{align*}
If a monic polynomial $F$ admit a unique irreducible factor $P_{\min}(F)$ with multiplicity one over $\F_q$, which is the minimal with respect to degree, then we say that $F$ is \emph{distinguishable}. Under the above bijection $\phi$, if $\infty\notin S$, then the set $\mathfrak{D}(X, S)$ can be translated to
$$
\mathfrak{D}(q,S)\colonequals\{F\in \F_q[x] \text{ is monic }\colon F \text{ is distinguishable and } (P_{\min}(F))_0\in S \}.
$$
In this situation, if one takes $a$ to be the identity of the convolution ring of arithmetic functions over effective divisors of $\mathbb{P}_{\mathbb{F}_q}^1$ in Corollary~\ref{Main Corollary_Axiom A_sharp} and uses the fact that $\delta(\cP)=\delta(\cP-\{\infty\})$, then one immediately obtains the following corollary. 
\begin{corollary}
For two coprime monic polynomials $f, g\in \mathbb{F}_q[x]$, we have	
\begin{equation}
    -\lim_{n\to\infty}  \sum_{\smat{F \in \mathfrak{D}(q,\cP), 1\le \deg F\le n\\ p_{\min}(F)\equiv f\mod g}}\frac{\mu(F)}{\varphi(F)}=\frac1{\varphi(g)}.
\end{equation}
\end{corollary}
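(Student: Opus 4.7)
The plan is to deduce the corollary directly from Corollary~\ref{Main Corollary_Axiom A_sharp} applied to the semigroup $\cA_X$ of effective $0$-cycles on the projective line $X=\P^1_{\F_q}$, with $a$ taken to be the identity of the convolution ring (so that $\mu*a=\mu$) and with $S$ chosen as the set of prime $0$-cycles $P\neq (\infty)$ whose associated monic irreducible polynomial $F_P$ satisfies $F_P\equiv f\pmod g$. Since $a$ is the identity, the hypotheses $a(0)=1$ and $|a(A)|\ll \|A\|^{-\alpha}$ of the corollary are automatic, so the real content is to identify the polynomial sum with the divisor sum produced by the corollary and to pin down $\delta(S)$.

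First I would transfer the statement through the bijection $\phi$ from the preceding example. Under $\phi$, monic polynomials in $\F_q[x]$ correspond to effective divisors not supported at $(\infty)$; moreover $\|F\|=q^{\deg F}=\|(F)_0\|$ because the dimension here is $d=1$, and the Euler totient $\varphi(F)$ agrees with $\varphi((F)_0)=\|(F)_0\|\prod_{P\mid (F)_0}(1-\|P\|^{-1})$ because both are multiplicative with the same local factors. Since $\phi$ preserves the factorization structure, $F$ is distinguishable with $p_{\min}(F)\equiv f\pmod g$ if and only if $(F)_0$ lies in $\mathfrak{D}(\cA_X,S)$.

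Next I would show that divisors involving $(\infty)$ contribute nothing. The prime $(\infty)$ has $\partial((\infty))=1$, the minimum possible prime degree, so any distinguishable divisor $D$ with $(\infty)\mid D$ automatically has $P_{\min}(D)=(\infty)$; but $(\infty)\notin S$, so $D\notin \mathfrak{D}(\cA_X,S)$. Consequently the sum appearing in Corollary~\ref{Main Corollary_Axiom A_sharp} coincides term-by-term with the polynomial sum in the statement after applying $\phi$.

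It remains to verify $\delta(S)=1/\varphi(g)$. This is exactly the function field analog of Dirichlet's theorem on primes in arithmetic progressions (Kornblum's theorem), which asserts that among monic irreducible polynomials of degree $\leq n$, a proportion $1/\varphi(g)+o(1)$ are congruent to $f$ modulo $g$ when $(f,g)=1$. Because $\delta(\cP)=\delta(\cP\setminus\{(\infty)\})$ as noted in the paper, this density is the same whether computed inside $\cP$ or inside $\cP\setminus\{(\infty)\}$. Plugging into Corollary~\ref{Main Corollary_Axiom A_sharp} and translating back through $\phi$ yields the claimed identity. The only substantive step beyond bookkeeping is invoking Kornblum's theorem; handling $(\infty)$ is easy because its minimality as a prime degree forces it out of $\mathfrak{D}(\cA_X,S)$ as soon as it is excluded from $S$.
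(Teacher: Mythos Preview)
Your proposal is correct and follows essentially the same route as the paper: take $a$ to be the convolution identity in Corollary~\ref{Main Corollary_Axiom A_sharp} for $X=\mathbb{P}^1_{\F_q}$, transfer through the bijection $\phi$, and identify $\delta(S)=1/\varphi(g)$ via the prime number theorem in arithmetic progressions for $\F_q[x]$. Your extra care in showing that no divisor supported at $(\infty)$ survives in $\mathfrak{D}(\cA_X,S)$ (because $(\infty)$ has degree $1$ and is excluded from $S$) simply makes explicit what the paper handles by the remark $\delta(\cP)=\delta(\cP\setminus\{(\infty)\})$; one minor quibble is that the natural density here is defined via primes of degree exactly $n$, not degree $\leq n$, but Kornblum's theorem gives the same answer either way.
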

\end{example}

\begin{example}\label{eg: 3}
We can also apply our results to study the distribution of subspaces. Let $V$ be an $(\ell+1)$-dimensional vector space over $\F_q$. Denote by $G(k+1,\ell+1)$ (or $G(k+1, V)$) the set of $(k+1)$-dimensional linear subspaces (over $\overline{\F}_q$) of $V$. Using the Pl\"ucker embedding ${G}(k+1, \ell+1)\hookrightarrow\mathbb{P}(\bigwedge^{k+1} V)$  \cite[Lecture~6]{Harris-AG-1st-course}, we can realize $G(k+1,\ell+1)$  as a $(k+1)(\ell-k)$-dimensional smooth projective variety defined over $\F_q$, i.e.,  the Grassmannian variety, which is denoted by $\mathbb{G}(k,\mathbb{P}_{\F_q}^{\ell})$. By the same embedding, one can see that \cite[Proposition~1.7.2]{Stanley-Enum-comb-1}
$$
\#\mathbb{G}(k,\mathbb{P}_{\F_q}^{\ell})(\F_{q^N})=\frac{|\GL(\ell+1, \F_{q^N})|}{q^{N(k+1)(\ell-k)}|\GL(k+1, \F_{q^N})|\cdot|\GL(\ell-k, \F_{q^N})|}.
$$
From this, one can deduce the zeta function of $\mathbb{G}(k+1, \ell+1)$, which turns out to be of the form 
$$
Z_{\mathbb{G}(k+1,\ell+1)}(T)=\frac{1}{(1-T)(1-qT)^{B_2}(1-q^2T)^{B_4}\cdots(1-q^d T)},
$$
where $d=(k+1)(\ell-k)$ and $B_i$ is the Betti number of $\mathbb{G}(k,\mathbb{P}_{\F_q}^{\ell})$ for each $i$. 

Now let $X=\mathbb{P}_{\F_q}^{\ell}$. Note that its affine cone $\mathbb{A}_{\F_q}^{\ell+1}$ can be understood as a $(\ell+1)$-dimensional space over $\F_q$. For every $(k+1)$-dimensional subspace $\widetilde{H}$ of $\mathbb{A}_{\overline{\F}_q}^{\ell+1}=\mathbb{A}_{\F_q}^{\ell+1}\times {{\rm Spec}\,\overline{\F}_q}$, denote by ${H}\colonequals H(\widetilde{H})=\cup_{\sigma}\widetilde{H}^{\sigma}$, where $\widetilde{H}^{\sigma}$ runs over all the distinct Galois conjugates of $\widetilde{H}$. It is easy to see that each ${H}$ is a reduced variety defined over the ground field $\F_q$. Then define  
$$
\cP_{k,\ell,q}\colonequals\{{H}| \widetilde{H}\in G(k+1, \mathbb{A}_{\F_q}^{\ell+1})\}.
$$ 
Consider the semigroup 
$$
\cA_{k,\ell,q}=\{a_1{H}_1\cup a_2{H}_2\cup \cdots\cup a_s{H}_s| {H}\in \cP_{k,\ell, q}, a_i\in \Z_{\geq 0}\}
$$ generated by $\cP_{k,\ell,q}$ by taking finite union in the sense of schemes (i.e. count the multiplicities). In particular, if all $a_i=0$, then we let the union to the empty set $\emptyset$. One can check that the above setups give rise to a well-defined abelian free semigroup structure of $\cA_{k,\ell,q}$ with the identity element $\emptyset$ and the additional operation equaling to the union operation. 

Recall that $d=(k+1)(\ell-k)$, for each ${H}\in \cP_{k,\ell,q}$, we define $\deg {H}$ to be the number of geometric irreducible components of ${H}$ and take the norm $\norm{H}$ to be $q^{d\deg H}$. 

In order to estimate the corresponding $\pi^{\#}(n)$ in this case, identify $G(k+1, \mathbb{A}_{\F_q}^{\ell+1})$ with $\mathbb{G}(k,\mathbb{P}_{\F_q}^{\ell})$ as in the first paragraph of this example. One can verify that under this identification, there is a natural bijection between $\cA_{k,\ell,q}$ and $\cA_{\mathbb{G}(k,\mathbb{P}_{\F_q}^{\ell})}$ (i.e., the semigroup of $0$-cycles of $\mathbb{G}(k,\mathbb{P}_{\F_q}^{\ell})$) and also a bijection between   $\cP_{k,\ell,q}$ and $\cP_{\mathbb{G}(k,\mathbb{P}_{\F_q}^{\ell})}$. Moreover, it is not hard to see that this identification respects to the degree maps and norm maps. Hence we can deduce that 
$$\pi^{\#}(n)=\frac{q^{(k+1)(\ell-k) n}}{n}+O\left(\frac{q^{\eta{(k+1)(\ell-k) n}}}{n}\right)$$
and $G^{\#}(n)=c_{\cA_{k, \ell, q}}q^{(k+1)(\ell-k) n}+O(q^{\eta(k+1)(\ell-k) n})$ with suitable constants $c_{\cA_{k, \ell, q}}>0$ and $0\leq \eta<1$. In particular, $\cA_{k, \ell, q}$ is an Axiom $A^{\#}$ type arithmetical semigroup by definition. Thus, it follows from Corollary~\ref{Cor: Alladi_varieties} that we obtain
\begin{equation*}
		-\lim_{n\to\infty}  \sum_{\smat{1\le \partial(H)\le n\\ H\in \mathfrak{D}(\cA_{k,\ell,q},S)}}\frac{\mu(H)}{\|H\|}=\delta(S),
	\end{equation*}
as long as $S\subseteq \cP_{k,\ell,q}$ has natural density $\delta(S)$ and the definition of $\mathfrak{D}(\cA_{k,\ell,q},S)$ is an analogue to \eqref{Definition of distingushable_Axiom A_sharp}. 
\end{example}

\subsection{Finite graphs and the semigroups of closed paths}\label{Sect: application_graph_theory}
This subsection follows from \cite{Terras-zeta-fun-graph}, especially its chapter~2. The graphs in this section will always be finite, connected and undirected. The degree of a vertex is the number of the edges connecting this vertex. Moreover, we will assume that all the graphs in this section do not contain a degree-$1$ vertex. Given a graph $G$, we will denote by $V$ the vertex set of $G$ and by $E$ the edge set of $G$. 

In order to define the prime element and the corresponding semigroup. We orient the edges in $E$ and label them as follows: 
$$
e_1, e_2, \cdots, e_m, e_{m+1}=e_1^{-1}, \cdots, e_{2m}=e_m^{-1},
$$
where $m=\#(E)$ is the number of unoriented edges and $e^{-1}_j=e_{j+m}$ is the edge $e_j$ with the opposite orientation. One oriented edge (i.e., the labeled edge) $a_j$ is called to \emph{follow} another oriented edge $a_i$ if the start vertex of $a_j$ (with respect to the orientation of $a_j$) is the end vertex of $a_i$. 

A \emph{path} $C$ is a sequence $\{a_1,a_2, \cdots, a_s\}$ of oriented edges of $G$ such that $a_{i+1}$ follows $a_i$. To simplify the notation, we will write such a path as $C=a_1\cdots a_s$. A \emph{closed path} is a path whose starting vertex and the terminal vertex coincide. In the following, we assume that all paths are closed and do not have a backtrack or tail, i.e., $a_{i+1}\neq a_i^{-1}$ for all $i=1, \ldots, s-1$ and $a_s\neq a_1^{-1}$. A (closed) path $C$ is called \emph{primitive} or \emph{prime} if $C\neq D^k$ for some positive integer $k>1$, i.e., we cannot find another path $D=a_1\cdots a_s$ such that $C=D\cdots D=(a_1\cdots a_s)  \cdots (a_1\cdots a_s)$.  

For a (closed) path $C=a_1\cdots a_s$, the \emph{equivalence class} $[C]$ means the following
$$
[C]=\{a_1\cdots a_s, a_2a_3\cdots a_sa_1, \cdots, a_sa_1\cdots a_{s-1}\}.
$$
That is, $[C]$ represents the set of paths which are only differed by the starting (hence also by the ending) vertex. We say that $[C]$ is a \emph{prime} if so is $C$. And we denote by $\nu(C)=\nu(a_1\cdots a_s)=s$ to be the \emph{length} of $C$. We also note that $[C]=[a_1\cdots a_s]$ and $[C^{-1}]=[a_s\cdots a_1]$ are considered to be distinct classes. 

One can verify that with above setups, every path $C$ can be decomposed into primitive paths and this decomposition induces a unique decomposition of $[C]$ into prime classes up to order. Thus we will consider the semigroup $\cA_G$ of all the path classes regarding to $G$ and it is clear that 
$$
\cA_G=\left\{[C]=\sum_{\text{finite sum}}c_i[P_i]\colon  [P_i]\in\cP \text{ and } c_i\in\Z_{\geq 0} \text{ for all } i\right\}.
$$
Here $\cP$ the set of all primes $[P]$ in $G$. In particular, $\cA_G$ is abelian since the equivalence classes of path do not care the starting vertex. 

\begin{definition}\cite[Definition~2.2]{Terras-zeta-fun-graph}
    The \emph{Ihara zeta function} $\zeta_G$ for a finite connected graph without degree-$1$ vertices is a complex function is defined by 
    $$
    \zeta_G(z)\colonequals\prod_{[P]\in\cP}\left(1-z^{\nu(P)}\right)^{-1}.
    $$
\end{definition}

The Ihara theorem generalized by Bass, Hashimoto, etc \cite[Theorem~2.5]{Terras-zeta-fun-graph} describes the properties of the Ihara zeta function. To state this result, we need to introduce some matrices associated to a graph. For a graph $G$ as above with $\ell=\#(V)$, the \emph{adjacency matrix} $A$ of $G$ is an $\ell\times \ell$ matrix with $(i,j)$th entry 
$$
a_{ij}=\begin{cases}
    \text{number of undirected edges connecting vertex $i$ to vertex $j$}, & \text{if }i\neq j,\\
    \text{$2\times$ number of loops at vertex $i$}, &\text{if }i=j.
\end{cases}
$$
Also, we associate $G$ with another diagonal matrix $Q$ whose $j$th diagonal entry $q_j$ such that $q_j+1$ is the degree of the $j$th vertex of $G$. 

\begin{theorem}[\cite{Terras-zeta-fun-graph}, Theorem~2.5]Let the graph $G$, the adjacency matrix $A$ and the diagonal matrix $Q$ be defined as above. And let $r=\#(E)-\#(V)+1$. Then 
    \begin{equation}\label{Eqn: Ihara_zeta_fun}
        \zeta_G(z)=\frac{1}{(1-z^2)^{r-1}\det(I-Az+Qz^2)}.
    \end{equation}
\end{theorem}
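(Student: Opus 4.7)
The plan is to prove the Ihara--Bass formula via the $2m \times 2m$ \emph{edge adjacency matrix} (or Hashimoto matrix) $W_1$, indexed by the oriented edges $e_1, \ldots, e_{2m}$, defined by $(W_1)_{e,f} = 1$ when $f$ follows $e$ and $f \neq e^{-1}$, and $0$ otherwise. The argument splits into two stages: first I would express $\zeta_G(z)^{-1}$ as a single $2m \times 2m$ determinant involving $W_1$, and then reduce this to the $\ell \times \ell$ determinant $\det(I_\ell - Az + Qz^2)$ via a block matrix identity.

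For the first stage, take logs of the product definition to get
\begin{equation*}
  \log \zeta_G(z) \;=\; \sum_{[P] \in \cP} \sum_{k \geq 1} \frac{z^{k\nu(P)}}{k} \;=\; \sum_{n \geq 1} \frac{N_n}{n}\, z^n,
\end{equation*}
where $N_n$ denotes the number of closed backtrackless tailless paths in $G$ of length $n$. The combinatorial identification uses that each prime class $[P]$ of length $d$ gives rise to exactly $d$ distinct closed paths of length $kd$ (the $d$ cyclic rotations of $P^k$), so the coefficient of $z^n/n$ matches on both sides. Separately, $(W_1^n)_{e,e}$ counts closed backtrackless walks of length $n$ starting and ending with the oriented edge $e$; since $e_n = e_0$, the condition $e_1 \neq e_0^{-1}$ forces $e_1 \neq e_n^{-1}$, so the no-tail condition is automatic. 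Hence $\operatorname{tr}(W_1^n) = N_n$, and
\begin{equation*}
  \log \zeta_G(z) \;=\; \sum_{n \geq 1} \frac{\operatorname{tr}(W_1^n)}{n} z^n \;=\; -\log \det(I_{2m} - W_1 z),
\end{equation*}
yielding $\zeta_G(z) = \det(I_{2m} - W_1 z)^{-1}$.

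For the second stage, I would introduce the $\ell \times 2m$ start/terminal incidence matrices $S$ and $T$, with $S_{v,e} = 1$ iff $v$ is the start of $e$ and $T_{v,e} = 1$ iff $v$ is the terminal of $e$, together with the edge-reversal permutation matrix $J$ satisfying $J^2 = I_{2m}$. A direct count verifies the elementary identities
\begin{equation*}
  ST^t = A, \qquad TT^t = Q + I_\ell, \qquad SJ = T, \qquad T^t S = W_1 + J.
\end{equation*}
The Bass identity $\det(I_{2m} - W_1 z) = (1-z^2)^{r-1} \det(I_\ell - Az + Qz^2)$ then follows by evaluating the determinant of a carefully chosen $(2m + \ell) \times (2m + \ell)$ block matrix in two ways: one reduction, exploiting $ST^t = A$ and $TT^t = Q + I_\ell$, produces the Ihara polynomial $\det(I_\ell - Az + Qz^2)$ as a Schur complement of $\det(I_{2m} + Jz)$; the other, using $T^t S = W_1 + J$ together with $SJ = T$, produces $\det(I_{2m} - W_1 z)$ as the complementary Schur block (times a factor of $(1-z^2)^\ell$). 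Since $J$ has eigenvalues $\pm 1$ each with multiplicity $m$, one has $\det(I_{2m} + Jz) = (1-z^2)^m$, and using $r - 1 = m - \ell$ the desired power of $(1-z^2)$ drops out.

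The main obstacle is the Bass step: the individual algebraic identities among $S, T, J, W_1$ are elementary to check, but the delicate point is finding the precise block matrix and the precise sequence of block row/column operations that simultaneously realize both $\det(I_{2m} - W_1 z)$ and $\det(I_\ell - Az + Qz^2)$ as Schur complements and yield matching powers of $(1-z^2)$. Once such a factorization is identified, the formula follows by routine determinant bookkeeping.
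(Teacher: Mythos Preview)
The paper does not supply its own proof of this statement: it is quoted verbatim as \cite[Theorem~2.5]{Terras-zeta-fun-graph} and used as a black box in Sect.~\ref{Sect: application_graph_theory}. Your proposal is the standard Bass proof (via the Hashimoto edge matrix $W_1$, the trace identity $\zeta_G(z)^{-1}=\det(I_{2m}-W_1z)$, and the block-matrix reduction through the incidence matrices $S,T$ and the reversal $J$), which is precisely the argument given in the cited reference; the identities $ST^t=A$, $TT^t=Q+I_\ell$, $SJ=T$, $T^tS=W_1+J$ and the Schur-complement bookkeeping are all correct, so there is nothing to add beyond noting that you are reproducing the proof the paper defers to.
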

Since $G$ is connected, we know that $r-1\geq 0$ (in fact $r-1=0$ if and only if $G$ is a tree). It follows that $\zeta_G(z)$ does not have a zero. Moreover, let $R_G$ be the radius of convergence of $\zeta_G$, i.e., the $R_G$ is the minimum of the absolute value of all poles of $\zeta_G$. Then we have the following result of Kotani and Sunada.
\begin{theorem}\cite[Theorem~8.1]{Terras-zeta-fun-graph}
Let $G$ be as above, let $\alpha$ and  $\beta$ be two integers such that $\alpha+1$ is the maximal degree among all vertices and $\beta+1$ be the minimal degree, respectively. Then every pole $u$ of $\zeta_G$ satisfies 
$$
R_G\leq |u|\leq 1\quad\text{and}\quad \frac{1}{\alpha}\leq R_G\leq \frac{1}{\beta}.
$$
\end{theorem}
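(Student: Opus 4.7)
The plan is to work through the Hashimoto edge adjacency matrix and Perron--Frobenius. Let $m = \#(E)$ and index the $2m$ oriented edges as in the definitions. I would define the $2m \times 2m$ edge matrix $W$ by $W_{e,f} = 1$ if $f$ follows $e$ and $f \neq e^{-1}$, and $W_{e,f} = 0$ otherwise. A short manipulation of the prime product, or equivalently the Bass determinant identity
$$\det(I - uW) = (1-u^2)^{r-1}\det(I - uA + u^2 Q),$$
combined with Ihara's formula \eqref{Eqn: Ihara_zeta_fun}, gives $\zeta_G(u)^{-1} = \det(I - uW)$. Hence the poles of $\zeta_G$ are precisely the reciprocals of the nonzero eigenvalues of $W$, counted with multiplicity.

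First I would derive the bound on $R_G$. The matrix $W$ is $0$-$1$ and non-negative; under the hypotheses on $G$ (connected with minimum degree at least $2$) it is irreducible, since from any oriented edge $e$ one can build a backtrackless walk in $G$ to any other oriented edge $f$ by following a path in $G$ and exploiting the minimum-degree-$2$ assumption to sidestep forced backtracks at each step. The row sum at $e = (v \to w)$ equals the number of oriented edges leaving $w$ other than $e^{-1}$, which is $\deg(w) - 1 \in [\beta, \alpha]$. By the Perron--Frobenius theorem for irreducible non-negative matrices, the spectral radius $\varrho(W)$ lies between the minimum and maximum row sums, so $\varrho(W) \in [\beta, \alpha]$. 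Since $R_G = 1/\varrho(W)$, this yields $1/\alpha \leq R_G \leq 1/\beta$.

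The bound $R_G \leq |u|$ is immediate from the definition of $R_G$. For the upper bound $|u| \leq 1$ I would show that every nonzero eigenvalue $\lambda$ of $W$ satisfies $|\lambda| \geq 1$. The Bass factorization splits the spectrum of $W$ into the eigenvalues $\pm 1$ inherited from $(1-u^2)^{r-1}$ (automatically of modulus $1$) and the reciprocals of roots of the quadratic pencil $\det(\lambda^2 I - \lambda A + Q) = 0$. Suppose $\lambda^2 v - \lambda A v + Q v = 0$ with $v \neq 0$, and pick an index $i$ with $|v_i|$ maximal. Comparing $i$th coordinates and using entrywise non-negativity of $A$ gives
$$\bigl|\lambda + q_i/\lambda\bigr|\,|v_i| = \Bigl|\sum_j a_{ij} v_j\Bigr| \leq \sum_j a_{ij}|v_j| \leq (q_i + 1)|v_i|,$$
so $|\lambda + q_i/\lambda| \leq q_i + 1$. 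Writing $\lambda = r e^{i\theta}$, this is equivalent to
$$r^2 + q_i^2/r^2 + 2 q_i \cos(2\theta) \leq (q_i+1)^2,$$
and exploiting $q_i \geq \beta \geq 1$ should force $r \geq 1$, hence $|u| = 1/r \leq 1$.

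The main obstacle will be the last calculation: deriving $r \geq 1$ from the displayed inequality. The real case is easy since the condition factors as $(\lambda - 1)(\lambda - q_i) \leq 0$, giving $1 \leq \lambda \leq q_i$; but in general the angular term $\cos(2\theta)$ must be handled carefully, and the boundary case $r = 1$ needs separate verification to ensure no spurious pole appears inside the open unit disk. Additionally, the irreducibility claim used in the Perron--Frobenius step must be justified by the standard combinatorial fact that a connected graph of minimum degree at least $2$ admits backtrackless walks between any two oriented edges, which is where the no-degree-$1$-vertex hypothesis enters crucially.
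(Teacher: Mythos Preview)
The paper does not give its own proof of this theorem: it is quoted verbatim from \cite[Theorem~8.1]{Terras-zeta-fun-graph} (the Kotani--Sunada theorem) and used as a black box to establish that $\cA_G$ satisfies Axiom~$A^{\#}$. So there is no ``paper's proof'' to compare against; what can be assessed is whether your argument stands on its own.

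Your treatment of the bound $1/\alpha \leq R_G \leq 1/\beta$ via Perron--Frobenius on the edge matrix $W$ is the standard route and is fine, as is the trivial inequality $R_G \leq |u|$. The genuine gap is in the step $|u|\leq 1$, i.e.\ $|\lambda|\geq 1$ for every nonzero eigenvalue of $W$. Your coordinate-wise inequality $|\lambda + q_i/\lambda| \leq q_i+1$ is correctly derived, and as you note it forces $1\leq |\lambda|\leq q_i$ when $\lambda$ is \emph{real}. But for complex $\lambda$ it does \emph{not} force $r\geq 1$: with $q_i=2$ and $\lambda = 0.9\,i$ one has $r^2 + q_i^2/r^2 + 2q_i\cos 2\theta = 0.81 + 4/0.81 - 4 \approx 1.75 < 9 = (q_i+1)^2$, so the displayed inequality is comfortably satisfied while $r=0.9<1$. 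The phrase ``exploiting $q_i\geq\beta\geq 1$ should force $r\geq 1$'' is therefore wishful; no amount of care with the angular term rescues the argument.

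The standard fix is to replace the coordinate-wise estimate by a Hermitian inner-product argument. If $(\lambda^2 I - \lambda A + Q)v=0$ with $v\neq 0$, pairing with $v$ gives the real quadratic $\|v\|^2\,\lambda^2 - \langle Av,v\rangle\,\lambda + \langle Qv,v\rangle = 0$, whose roots have product $\langle Qv,v\rangle/\|v\|^2 \geq 1$ since each $q_i\geq \beta\geq 1$. For non-real $\lambda$ the roots are $\lambda,\bar\lambda$ and $|\lambda|^2\geq 1$ follows immediately. For real $\lambda$ one adds the bound $|\langle Av,v\rangle| \leq \langle (Q+I)|v|,|v|\rangle = \langle Qv,v\rangle + \|v\|^2$ (from $a_{ij}\geq 0$ and the row-sum identity $\sum_j a_{ij}=q_i+1$); a short Vieta computation then shows both real roots satisfy $|\lambda|\geq 1$. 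This is essentially how Kotani--Sunada proceed.
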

From this theorem, one can deduce (applying the same arguments as in the previous section) the corresponding prime number theorem of $\cA_G$. Indeed, if we take $\Delta_G\colonequals\gcd\{\nu(P)\colon [P] \text{ prime of }G\}$, then (see \cite[$\S$~2.7]{Terras-zeta-fun-graph})
$$
\pi^{\#}_G(n)\colonequals\#\{\text{primes }[P]\colon \nu(P)=\Delta_G n\}=\frac{1}{nR_G^{\Delta_G n}}+O\left(\frac{1}{n R_G^{\Delta_G \eta n}}\right)
$$
for some $0\leq \eta<1$. Thus the set $\cP$ of primes $[P]$ is countable, and similarly as previous sections, we have 
$$
G^{\#}(n)\colonequals\#\{[C]: \nu(C)=\Delta_G n\}=\frac{c_G}{R_G^{\Delta_G n}}+O\left(\frac{1}{R_G^{\Delta_G \eta n}}\right),
$$
where the positive constant $c_G$ only depends on $G$. By all above facts, we see that the semigroup $\cA_G$ satisfies all the conditions of Axiom $A^{\#}$ in Definition~\ref{Def: Axiom_A_sharp}, hence Corollary~\ref{Cor: Alladi_graph} follows as a consequence of Theorem~\ref{Thm: main_thm_convolution_Axiom A_sharp}. As what we did in other subsections, we restate the corollary here, in which we set the degree map $\partial([C])=\nu([C])$ for consistency and $\norm{[C]}=(1/R_G)^{\nu([C])}$. Moreover, the definition of $\mathfrak{D}(\cA_{G},S)$ is an analogue to \eqref{Definition of distingushable_Axiom A_sharp}. 

\begin{corollary}
Assume that $S\subseteq \cP$ is a subset of primitive path classes with natural density $\delta(S)$. For any arithmetic function $a: \cA_{G}\to \C$ supported on $\mathfrak{D}(\cA_G, S)$ with $a([0])=1$, and 
$$\lim_{n\to\infty}\sum_{\smat{1\le \partial([C])\le n\\ [C]\in \mathfrak{D}(\cA_G,S)}}\frac{|a([C])|}{\|[C]\|}\log\log \|[C]\|<\infty.$$
Then
	\begin{equation*}
		-\lim_{n\to\infty}  \sum_{\smat{1\le \partial([C])\le n\\ [C]\in \mathfrak{D}(\cA_G,S)}}\frac{\mu*a([C])}{\|[C]\|}=\delta(S). 
	\end{equation*}	
In particular, we have
\begin{equation*}
		-\lim_{n\to\infty}  \sum_{\smat{1\le \partial([C])\le n\\ [C]\in \mathfrak{D}(\cA_{G},S)}}\frac{\mu([C])}{\|[C]\|}=\delta(S).
	\end{equation*}
\end{corollary}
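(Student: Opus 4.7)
\textbf{Proof proposal for Corollary~\ref{Cor: Alladi_graph}.} The plan is to realize $\cA_G$ as an Axiom $A^{\#}$ type arithmetical semigroup and then invoke Theorem~\ref{Thm: main_thm_convolution_Axiom A_sharp} as a black box. The corollary will follow as an immediate specialization, so the entire task reduces to verifying the hypotheses of that theorem for this particular semigroup.

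First, I would verify the algebraic structure. Under formal addition of equivalence classes $[P_i]$ of primitive paths, $\cA_G$ is a commutative semigroup with identity $[0]$. The unique factorization of any tailless, backtrack-free closed path into primitive paths, together with the fact that $[C]$ only records the cyclic sequence of oriented edges, ensures that $\cA_G$ is freely generated by $\cP$. I would then take the degree map to be $\partial([C])=\nu([C])/\Delta_G$ (where $\Delta_G\colonequals\gcd\{\nu(P):[P]\in\cP\}$), so that $\partial$ lands in $\Z_{\geq 0}$, satisfies $\partial([0])=0$, is additive under formal sum, and is strictly positive on primes. Correspondingly the norm becomes $\|[C]\|=q^{\partial([C])}$ with $q\colonequals R_G^{-\Delta_G}>1$.

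Next, I would verify the analytic Axiom $A^{\#}$ asymptotic together with the technical condition $Z_{\cA_G}(-q^{-1})\neq 0$. The Ihara zeta function satisfies $Z_{\cA_G}(z)=\zeta_G(z^{1/\Delta_G})$ after reindexing, and by the Ihara--Bass--Hashimoto formula \eqref{Eqn: Ihara_zeta_fun} it is a rational function with \emph{no zeros}, so $Z_{\cA_G}(-q^{-1})\neq 0$ is automatic. By the Kotani--Sunada theorem, every pole $u$ of $\zeta_G$ satisfies $R_G\leq|u|\leq 1$, and $R_G$ is the unique smallest pole in absolute value. Extracting coefficients via a standard contour integral for $-z\,\zeta_G'(z)/\zeta_G(z)$ against $|z|=R_G^{\eta}$ for some $0\leq\eta<1$ yields the prime number theorem $\pi^{\#}_G(n)=\frac{1}{n R_G^{\Delta_G n}}+O\!\left(\frac{1}{n R_G^{\Delta_G\eta n}}\right)$, and the same contour technique applied to $Z_{\cA_G}$ itself (equivalently the Euler-product-to-Dirichlet-series manipulation used in \cite{IM91}) gives $G^{\#}(n)=c_G q^n+O(q^{\eta n})$, which is exactly \eqref{Eqn: Axiom_A_sharp}.

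With Axiom $A^{\#}$ verified, the natural density $\delta(S)$ is defined as in Sect.~\ref{Sect: Axiom_A_sharp}, the distinguished set $\mathfrak{D}(\cA_G,S)$ reads off verbatim from \eqref{Definition of distingushable_Axiom A_sharp} using $\partial$, and the hypotheses on $a$ translate verbatim. Theorem~\ref{Thm: main_thm_convolution_Axiom A_sharp} then delivers both identities in the corollary. The main obstacle I anticipate is not analytic but bookkeeping: one must be careful that the step-size $\Delta_G$ in the original length function $\nu$ is absorbed correctly into the rescaled $\partial$ (otherwise $G^{\#}(n)=0$ for $n\notin\Delta_G\Z$ and the asymptotic \eqref{Eqn: Axiom_A_sharp} fails), and that the norm map displayed in Table~\ref{table: examples}, namely $\|[P]\|=R_G^{-\nu([P])}$, coincides with $q^{\partial([P])}$ under the chosen convention. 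Once these conventions are fixed, no further work is required beyond citing Theorem~\ref{Thm: main_thm_convolution_Axiom A_sharp}.
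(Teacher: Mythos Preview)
Your proposal is correct and follows essentially the same route as the paper: Section~\ref{Sect: application_graph_theory} verifies that $\cA_G$ is an Axiom~$A^{\#}$ semigroup (free generation by $\cP$, the Ihara--Bass formula \eqref{Eqn: Ihara_zeta_fun} to see $\zeta_G$ has no zeros, Kotani--Sunada for the pole locations, then the prime number theorem and the $G^{\#}(n)$ asymptotic) and then simply cites Theorem~\ref{Thm: main_thm_convolution_Axiom A_sharp}. Your explicit handling of the $\Delta_G$ rescaling is in fact slightly more careful than the paper's own exposition, which leaves the reconciliation between $\partial([C])=\nu([C])$ and the index $\Delta_G n$ in $G^{\#}(n)$ somewhat implicit.
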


\section*{Acknowledgments}
We would like to thank Jeff Achter, Dongchun Han, Rachel Pries, Frank Thorne and Biao Wang for their helpful comments. We would also like to thank the referee for the detailed comments and suggestions.

	\bibliographystyle{plain}
	\bibliography{Generalizations_of_Alladi_s_formula_for_arithmetical_semigroups}

\begin{thebibliography}{10}

\bibitem{Alladi1977}
Krishnaswami Alladi.
\newblock Duality between prime factors and an application to the prime number
  theorem for arithmetic progressions.
\newblock {\em J. Number Theory}, 9(4):436--451, 1977.

\bibitem{Axer1910}
A.~Axer.
\newblock Beitrag zur {K}enntnis der zahlentheoretischen {F}unktionen $\mu(n)$
  und $\lambda(n)$.
\newblock {\em Prace Matematyczno-Fizyczne}, 21(1):65--95, 1910.

\bibitem{Chen2017}
Weiyan Chen.
\newblock Analytic number theory for 0-cycles.
\newblock {\em Math. Proc. Cambridge Philos. Soc.}, 166(1):123--146, 2019.

\bibitem{Cohen1989}
Stephen~D. Cohen.
\newblock The function field abstract prime number theorem.
\newblock {\em Math. Proc. Cambridge Philos. Soc.}, 106(1):7--12, 1989.

\bibitem{Dawsey2017}
Madeline~Locus Dawsey.
\newblock A new formula for {C}hebotarev densities.
\newblock {\em Res. Number Theory}, 3:Art. 27, 2017.

\bibitem{DawseyJustSchneider2021}
Madeline~Locus Dawsey, Matthew Just, and Robert Schneider.
\newblock A "supernormal" partition statistic.
\newblock {\em arXiv preprint arXiv:2107.14284}, 2021.

\bibitem{Deligne-weil-conj}
Pierre Deligne.
\newblock La conjecture de {W}eil. {I}.
\newblock {\em Inst. Hautes \'{E}tudes Sci. Publ. Math.}, (43):273--307, 1974.

\bibitem{DiamondMontgomeryVorhauer2006}
Harold~G. Diamond, Hugh~L. Montgomery, and Ulrike M.~A. Vorhauer.
\newblock Beurling primes with large oscillation.
\newblock {\em Math. Ann.}, 334(1):1--36, 2006.

\bibitem{DuanWangYi2020}
Lian Duan, Biao Wang, and Shaoyun Yi.
\newblock Analogues of {A}lladi's formula over global function fields.
\newblock {\em Finite Fields Appl.}, 74:101874, 2021.

\bibitem{Dowrk-weil-conj}
Bernard Dwork.
\newblock On the rationality of the zeta function of an algebraic variety.
\newblock {\em Amer. J. Math.}, 82:631--648, 1960.

\bibitem{FujisawaMinamide2018}
Yusuke Fujisawa and Makoto Minamide.
\newblock On partial sums of the {M}\"obius and {L}iouville functions for
  number fields.
\newblock {\em arXiv preprint arXiv:1212.4348}, 2012.

\bibitem{Grothendieck-weil-conj}
Alexander Grothendieck.
\newblock Formule de {L}efschetz et rationalit\'{e} des fonctions {$L$}.
\newblock In {\em S\'{e}minaire {B}ourbaki, {V}ol. 9}, pages Exp. No. 279,
  41--55. Soc. Math. France, Paris, 1995.

\bibitem{Harris-AG-1st-course}
Joe Harris.
\newblock {\em Algebraic geometry}, volume 133 of {\em Graduate Texts in
  Mathematics}.
\newblock Springer-Verlag, New York, 1995.
\newblock A first course, Corrected reprint of the 1992 original.

\bibitem{IM91}
Karl-Heinz Indlekofer, Eugenijus Manstavi\v{c}ius, and Richard Warlimont.
\newblock On a certain class of infinite products with an application to
  arithmetical semigroups.
\newblock {\em Arch. Math. (Basel)}, 56(5):446--453, 1991.

\bibitem{Knopfmacher1975}
John Knopfmacher.
\newblock {\em Abstract analytic number theory}.
\newblock North-Holland Publishing Co., Amsterdam-Oxford; American Elsevier
  Publishing Co., Inc., New York, 1975.
\newblock North-Holland Mathematical Library, Vol. 12.

\bibitem{Knopfmacher1979}
John Knopfmacher.
\newblock {\em Analytic arithmetic of algebraic function fields}, volume~50 of
  {\em Lecture Notes in Pure and Applied Mathematics}.
\newblock Marcel Dekker, Inc., New York, 1979.

\bibitem{KnopfmacherZhang2001}
John Knopfmacher and Wen-Bin Zhang.
\newblock {\em Number theory arising from finite fields}, volume 241 of {\em
  Monographs and Textbooks in Pure and Applied Mathematics}.
\newblock Marcel Dekker, Inc., New York, 2001.
\newblock Analytic and probabilistic theory.

\bibitem{KuralMcDonaldSah2020}
Michael Kural, Vaughan McDonald, and Ashwin Sah.
\newblock M\"{o}bius formulas for densities of sets of prime ideals.
\newblock {\em Arch. Math. (Basel)}, 115(1):53--66, 2020.

\bibitem{LagariasOdlyzko1977}
J.~C. Lagarias and A.~M. Odlyzko.
\newblock Effective versions of the {C}hebotarev density theorem.
\newblock In {\em Algebraic number fields: {$L$}-functions and {G}alois
  properties ({P}roc. {S}ympos., {U}niv. {D}urham, {D}urham, 1975)}, pages
  409--464, 1977.

\bibitem{MurtyvanOrder2007}
M.~Ram Murty and Jeanine Van~Order.
\newblock Counting integral ideals in a number field.
\newblock {\em Expo. Math.}, 25(1):53--66, 2007.

\bibitem{Neukirch-alg-NT}
J\"{u}rgen Neukirch.
\newblock {\em Algebraic number theory}, volume 322 of {\em Grundlehren der
  Mathematischen Wissenschaften [Fundamental Principles of Mathematical
  Sciences]}.
\newblock Springer-Verlag, Berlin, 1999.
\newblock Translated from the 1992 German original and with a note by Norbert
  Schappacher, With a foreword by G. Harder.

\bibitem{OnoSchneiderWagner2020}
Ken Ono, Robert Schneider, and Ian Wagner.
\newblock {Partition-theoretic formulas for arithmetic densities, II}.
\newblock {\em The Hardy-Ramanujan Journal (Special Issue):
  https://arxiv.org/abs/2010.15107}, 10010.

\bibitem{OnoSchneiderWagner2017}
Ken Ono, Robert Schneider, and Ian Wagner.
\newblock Partition-theoretic formulas for arithmetic densities.
\newblock In {\em Analytic number theory, modular forms and
  {$q$}-hypergeometric series}, volume 221 of {\em Springer Proc. Math. Stat.},
  pages 611--624. Springer, Cham, 2017.

\bibitem{Schneider2017}
Robert Schneider.
\newblock Arithmetic of partitions and the {$q$}-bracket operator.
\newblock {\em Proc. Amer. Math. Soc.}, 145(5):1953--1968, 2017.

\bibitem{Schneider2020}
Robert Schneider.
\newblock Partition-theoretic {A}belian theorems.
\newblock {\em arXiv preprint arXiv:2011.08386}, 2020.

\bibitem{Stanley-Enum-comb-1}
Richard~P. Stanley.
\newblock {\em Enumerative combinatorics. {V}olume 1}, volume~49 of {\em
  Cambridge Studies in Advanced Mathematics}.
\newblock Cambridge University Press, Cambridge, second edition, 2012.

\bibitem{SweetingWoo2019}
Naomi Sweeting and Katharine Woo.
\newblock Formulas for {C}hebotarev densities of {G}alois extensions of number
  fields.
\newblock {\em Res. Number Theory}, 5(1):Paper No. 4, 13, 2019.

\bibitem{Tao2010}
Terence Tao.
\newblock A remark on partial sums involving the {M}\"{o}bius function.
\newblock {\em Bull. Aust. Math. Soc.}, 81(2):343--349, 2010.

\bibitem{Terras-zeta-fun-graph}
Audrey Terras.
\newblock {\em Zeta functions of graphs}, volume 128 of {\em Cambridge Studies
  in Advanced Mathematics}.
\newblock Cambridge University Press, Cambridge, 2011.
\newblock A stroll through the garden.

\bibitem{Wang2020ijnt}
Biao Wang.
\newblock An analogue of a formula for {C}hebotarev densities.
\newblock {\em Int. J. Number Theory}, 16(7):1557--1565, 2020.

\bibitem{Wang2020rj}
Biao Wang.
\newblock The {R}amanujan sum and {C}hebotarev densities.
\newblock {\em The Ramanujan Journal}, Jul 2020.

\bibitem{Wang2020jnt}
Biao Wang.
\newblock Analogues of {A}lladi's formula.
\newblock {\em Journal of Number Theory}, 221:232--246, Apr 2021.

\bibitem{Weil-conjecture}
Andr\'{e} Weil.
\newblock Numbers of solutions of equations in finite fields.
\newblock {\em Bull. Amer. Math. Soc.}, 55:497--508, 1949.

\end{thebibliography}

\end{document}